\newcommand{\R}{\mathbb R}
\newcommand{\dv}{{\rm div \;}}
\newcommand{\Hh}{\mathcal{H}}
\newcommand{\M}{\mathcal M}
\newcommand{\N}{\mathbb N}
\newcommand{\Ss}{{\mathbb S}^1}
\newcommand{\ve}{\varepsilon}
\newcommand{\deb}{\rightharpoonup}
\newcommand{\destar}{\overset{*}\deb}
\newcommand{\modvec}{|}
\newcommand{\eps}{\varepsilon}
\newcommand{\Om}{\Omega}
\newcommand{\dphi}{d_{\varphi}}
\newcommand{\Dv}{{\rm Div \;}}
\newcommand{\Ghlambeps}{G^h_{\Lambda,\eps}}
\newtheorem{thm}{Theorem}[section]
\newtheorem{prop}[thm]{Proposition}
\newtheorem{lem}[thm]{Lemma}
\newtheorem{defn}[thm]{Definition}
\newtheorem{preremark}[thm]{Remark}
\newenvironment{remark}{\begin{preremark}\rm}{\medskip \end{preremark}}
\newcommand{\uspace}{W^{1,p}_K(\Omega)}
\DeclareMathOperator{\spt}{spt}
\DeclareMathOperator{\diam}{diam}
\DeclareMathOperator{\lip}{Lip}
\author{Matthieu Bonnivard\footnotemark[1], Antoine Lemenant\footnotemark[1], Filippo Santambrogio\footnotemark[2]\footnotemark[2]}
\title{Approximation of length minimization problems\\ among compact connected sets.}
\begin{document}

\footnotetext[1]{Université Paris-Diderot, Laboratoire Jacques-Louis Lions, CNRS UMR 7598, bonnivard@ljll.univ-paris-diderot.fr, lemenant@ljll.univ-paris-diderot.fr}
\footnotetext[2]{Université Paris-Sud, Laboratoire de Math\'ematiques d'Orsay, CNRS UMR 8628, filippo.santambrogio@math.u-psud.fr}
\maketitle

\abstract{In this paper we provide an approximation \emph{à la} Ambrosio-Tortorelli of some classical minimization problems involving the length of an unknown one-dimensional set,  with an additional connectedness constraint, in dimension two. We introduce a term of new type relying on a weighted geodesic distance that  forces the minimizers to be connected at the limit. We apply this approach to approximate the so-called  Steiner Problem, but also the average distance problem, and finally a problem relying on the $p$-compliance energy. The proof of convergence of the approximating functional, which is stated in terms of $\Gamma$-convergence  relies on technical tools from geometric measure theory, as for instance a uniform lower bound for a sort of average directional Minkowski content of a family of compact connected sets. }


\section{Introduction}

In the pioneering work by Modica and Mortola \cite{MM} (see also \cite{alberti}), it is proved that the functional 
$$M_\varepsilon(\varphi)=\varepsilon \int_{\Omega}\modvec \nabla \varphi \modvec ^2dx+\frac{1}{\varepsilon}\int_{\Omega}\varphi^2 (1-\varphi)^2 dx$$
converges as $\varepsilon\to 0$ in a sense to be precised, to the Perimeter functional 
$$P(\varphi)=\frac{1}{3}Per(A,\Omega) \text{ if } \varphi={\bf 1}_{A} \in BV(\Omega), \text{ and }+\infty \text{ otherwise}.$$
The original motivation for Modica and Mortola was a mathematical justification of convergence for some two phase problem based upon a model by Cahn and Hilliard.

Later, this procedure gave rise to a method to perform a numerical approximation of a wide class of variational problems. Indeed, in order to minimize numerically the geometrical functional given by the perimeter, one may find it convenient to minimize the more regular functional $M_\varepsilon$, which are of elliptic type. This idea was used by many authors in the last two decades, with quite satisfactory results. Let us mention for instance Ambrosio-Tortorelli \cite{at} for the Mumford-Shah functional,  and more recently Oudet \cite{OudetKelvin} and Bourdin-Bucur-Oudet \cite{bbo} for partition problems, Santambrogio and Oudet-Santambrogio \cite{sant,OudetSantam} for branched transport,  among many others like \cite{alr,dmi} for fractures, etc.

But no analogous method of approximation was given so far to approximate one of the simplest problem of that kind, namely the Steiner problem. Given a finite number of points $D:=\{x_i\}\subset \R^2$, the original Steiner problem consists in minimizing
\begin{eqnarray}
\min\big \{\Hh^1(K) \quad ; \; K\subset \R^2 \text{ compact, connected, and containing } D \big\}. \label{Steiner00}
\end{eqnarray}
Here, $\Hh^1(K)$ stands for the one-dimensional Hausdorff measure of $K$. It is  known that minimizers for \eqref{Steiner00} do exist, need not to be unique, and are  trees composed by a finite number of segments joining with only triple junctions at 120°, whereas  computing a minimizer is very hard (some versions of the Steiner Problem belong to the original list of (NP)-complete problems by Karp, \cite{Karp}). We refer for instance to \cite{gilbpoll} for a history of the problem and to \cite{ps2009} for recent mathematical results about it.

The main difficulty is to take care of the connectedness constraint on the competitors. In \cite{OudetSantam}, the problem of branched transport is considered which is slightly related to the Steiner problem. We mention that a tentative approach to the Stenier problem, by considering it as a limit case $\alpha=0$ in the branched transport problem, was indeed performed in \cite{OudetSantam}, but the analysis in such a paper required $\alpha>0$ and the simulation performed with $\alpha$ close to $0$ were not completely satisfactory.

In this paper we propose a new way to tackle the connectedness constraint on minimizers. Our strategy is to  add a term  in the Modica-Mortola functional, relying on the weighted geodesic distance  $d_\varphi$, defined as follows. Let $\Omega \subset \R^2$. For any $\varphi \in  C^0(\overline{\Omega})$ we define the geodesic distance weighted by $\varphi$ as being
\begin{eqnarray}
d_\varphi(x,y):= \inf \left\{ \int_{\gamma} \varphi(x) d\mathcal{H}^1(x) ; \; \gamma \text{ curve connecting } x \text{ and } y \right\}. \label{defDphi}
\end{eqnarray}
Given a function $\varphi$, the distance $d_\varphi$ can be treated numerically by using the so-called ``fast marching'' method (see \cite{sethian-fastmarching}). A recent improvement of this algorithm  (see \cite{bcps}) is now able to compute at the same time $d_\varphi$ and its gradient with respect to $\varphi$, which is especially useful when dealing with a minimization problem on $\varphi$ involving the values of $d_\varphi$. Our proposal to approximate the problem in \eqref{Steiner00} is to minimize the functional
\begin{eqnarray}
\frac{1}{4\varepsilon}\int_{\Omega}(1-\varphi)^2dx + \varepsilon\int_{\Omega}\modvec \nabla \varphi\modvec ^2dx+\frac{1}{c_\varepsilon}\sum_{i = 1}^{N}d_\varphi(x_i,x_1),\label{approxfunctSt} 
\end{eqnarray}
with $c_\varepsilon\to 0$, arbitrarily. The first two terms are a simple variant of the standard Modica-Mortola functional, already used in  \cite{at}: they force $\varphi$ to tend to $1$ almost everywhere  at the limit $\ve\to 0$, and the possible transition to a thin region where $\varphi=0$ is paid by means of the length of this region, while the last term tends to make $\varphi$ vanish on a connected set containing the points $x_i$. The key point is that whenever
$$\sum_{i = 1}^{N}d_\varphi(x_i,x_1)=0,$$
 the set $\{\varphi =0\}$ must be path connected and contain the $\{x_i\}$. In our general setting, we will rather consider quantities of the form 
 $$\int_{\Omega}d_{\varphi}(x,x_0)d \mu,$$
 leading to a more general Steiner problem where the finite set $\{x_i\}$ is replaced by the support of $\mu$. We refer the reader to Section \ref{sectionSteiner} and Theorem \ref{SteinerTheorem} for a precise statement about the Steiner Problem.

Beyond the Steiner problem, we also consider in this paper  two other minimization problems that one can find in the literature, where the admissible competitors are again compact connected sets. The first one is the average distance problem studied in \cite{bos,bs,s,ps,bos,st,bms,l1,t,ant,lusl,sl}. For some $\Omega\subset \R^2$,  $\Lambda >0$,  and  $f\geq 0$ a positive $L^1$ density  on $\Omega$, the average distance problem  is the the following:
\begin{eqnarray}
\min\left\{\int_{\Omega} dist(x,K)  f(x)dx +\Lambda \Hh^1(K)\; ; \; K \subset \overline{\Omega} \text{ is closed and connected}\right\}. \label{prob1}
\end{eqnarray}
Here, $dist$ refers to the euclidean distance in $\R^2$. Notice that the first version of this problem which has been proposed in \cite{bos} replaced the length penalization $+\Lambda \Hh^1(K)$ with a constraint $\Hh^1(K)\le L$. Mathematically the two problems are similar, but the penalized one is easier to handle, and we concentrate on it in our paper.

The third and last problem  is a minimization problem involving the so-called $p$-Compliance energy coming from classical mechanics, see for instance    \cite{Buttst} for the following  version of that  problem: let $p \in ]1,+\infty[$ and $f\in L^{(p^*)'}(\Omega)$, where $p^*$ is the exponent of the injection $W^{1,p}\in L^{p^*}$ and $(p^*)'$ is its dual. For any closed set $K \subset \overline{\Omega}$, we denote by $W^{1,p}_{K}(\Omega)$ the adherence in $W^{1,p}(\Omega)$ of functions $C^\infty_c(\R^2\setminus K)$  (i.e. we put Dirichlet conditions on $K$ but not on $\partial\Omega$). We then consider $u_K$, the  unique solution for the problem
\begin{equation}
\min \left\{ \int_{\Omega}\Big(\frac{1}{p}\modvec \nabla u\modvec ^p -uf\Big)\; dx \;;\; u \in W^{1,p}_{K}(\Omega) \right\}. \label{prob1bis}
\end{equation}
In particular, denoting $\Delta_pu=\dv (\modvec \nabla u\modvec ^{p-2}\nabla u)$ the standard $p$-Laplacian, $u_K$ is a weak solution for the problem
$$
\left\{
\begin{array}{cc}
-\Delta_p u = f & \text{ in  } \Omega \setminus K \notag \\
u = 0 & \text{ on } K\notag\\
\frac{\partial u}{\partial n} =0  & \text{ on } \partial \Omega\setminus K \notag 
\end{array}
\right.
$$
The $p$-Compliance problem is then defined as being
\begin{equation}
\min \left\{ \frac{p-1}{p}\int_{\Omega}u_K f\;dx +\Lambda \Hh^1(K)\;;\; K \subset \overline{\Omega} \text{ is closed and connected} \right\}, \label{pb20}
\end{equation}
which is a min-max problem  (minimization in $K$ of a max in $u$).

In \cite{Buttst} it is observed that the above two problems are intimately related.  Precisely, when $p$ converges to $+\infty$, the $p$-compliance functional $\Gamma$-converges to the average distance functional (for instance if one decided to look at the variant where the function $u_K$ in the $p-$compliance is defined with $u=0$ on $K\cup\partial\Omega$, then at the limit one would get the average distance functional $\int_\Omega dist(x,K\cup\partial\Omega) f(x)dx$).

In order to approximate the two problems we first use a duality argument to turn them  into the following form. We denote by
$$\mathcal{K}:= \{K \subset \overline{\Omega}; \text{ closed and connected}\},$$
$$\mathcal{A}_p:=\Big \{(K,v)\,:\,K\in\mathcal{K},\,v \in L^q(\Omega) ; -\dv v  = f \text{ in }\overline\Omega \setminus K  \text{ and  } v\cdot n_\Omega=0 \text{ on }\partial \Omega \setminus K\Big\},$$
where $q$ is the conjugate exponent to $p$. Notice that the condition 
$$-\dv v  = f \text{ in }\overline\Omega \setminus K \text{ and  } v\cdot n_\Omega=0 $$ 
is intended as $\int_\Omega v\cdot \nabla\phi=\int f\phi$ for every $\phi\in C^1(\overline\Omega)$ with $\phi=0$ on $K$.
By duality, Problem \eqref{pb20} is equivalent to 
\begin{equation}
 \inf_{(K ,v) \in \mathcal{A}_p} \left\{ \frac{1}{q}\int_{\Omega}\modvec  v\modvec ^q dx + \Lambda \Hh^1(K)\right\} , \label{pb30}
\end{equation}
and Problem \eqref{prob1} is equivalent to \eqref{pb30} with $q=1$. More precisely, the $L^q$ norm is replaced by the norm in the space of measures and the admissible set by
$$\mathcal{A}_\infty:=\Big\{(K,v)\,:\,K\in\mathcal{K},\;v \in \mathcal M^d(\overline\Omega) ; -\dv v  = f \text{ in }\Omega \setminus K  \text{ and  } v\cdot n_\Omega=0 \text{ on }\partial \Omega \setminus K\Big\}.$$
In the definition of $\mathcal{A}_p$ and $\mathcal{A}_\infty$, the condition on the divergence and the normal derivative are intended in a weak sense. We refer to Section 5 for the precise proofs of these facts.

The purpose of introducing this new formulation is that we obtain a min-min problem, which is more easy the handle than the original one, of min-max type.

To approximate the problem \eqref{pb30} we then consider the family of functionals 
$$F_{\varepsilon}(v,\varphi,y)=
\frac{1}{q}\int_{\Omega} \modvec v\modvec ^q dx + \frac{1}{4\varepsilon}\int_{\Omega}(1-\varphi)^2 dx + \varepsilon\int_{U}\modvec \nabla \varphi \modvec ^2 dx $$
$$\hspace{1cm} +\frac{1}{\sqrt{\varepsilon}}\int_{\Omega}d_{\varphi}(x,y) d\big|\dv v +f\big|(x) +  |\dv v|(\Omega)
$$
which is proved to $\Gamma$-converge, as $\varepsilon \to 0$, to a functional whose minimization is equivalent to that of \eqref{pb30}, and this convergence is  one of  our main results (Theorem \ref{Gammaconv}). Notice that the framework for the Steiner problem, where no vector field $v$ is involved, is a bit different. We do not obtain a true $\Gamma-$convergence result, but we are anyway able to prove the convergence to a minimizer.

Each term of $F_\varepsilon$ is easy to interpret: the first term is just the original functional on $v$, the second and third are the Modica-Mortola part, the fourth term is the one which forces $\varphi$ to vanish on an almost connected set containing the support of $|\dv v +f|$, and finally the last term is just here to guarantee compactness in the space of measures. This term does not dissapear at the limit but does not affect the minimization as we prove in Proposition \ref{propositionFilippo}.

Let us further say a few words about the coefficient $1/\sqrt{\varepsilon}$ in front of the fourth term.  Indeed, in \eqref{approxfunctSt} $\sqrt{\ve}$ was replaced by a more general coefficient $c_\ve\to 0$. This was enough for Steiner problem, while here the assumption has to be refined because of some boundary issues. Indeed, in the proof of the $\Gamma$-limsup inequality, we follow a standard construction to find a recovering sequence of functions  $\varphi_\varepsilon$ which are almost zero in a neighborhood of a given  candidate set $K$, and we want them to be equal to $1$ on $\partial\Omega$. In order to do that, we need to modify them a little bit and in particular to contract them inside  the domain, if $K$  touches the boundary (see Lemma \ref{limsupmodifie}). This gives an extra cost, of the order of $(\varepsilon|\ln(\varepsilon)|)$. Hence, in our previous formula, any coefficient of the form $o((\varepsilon|\ln(\varepsilon)|)^{-1})$ could work, but we chose $\ve^{-1/2}$ for simplicity.  On the contrary, we decided to avoid this kind of boundary issues for the Steiner problem, since we know that the optimal set lies in the convex hull of the points $x_i$, that we can assume compactly contained in $\Omega$.

The proof of $\Gamma$-convergence uses standard ingredients from Geometric measure theory but relies on original ideas as well. If the $\Gamma-\limsup$ is based on classical arguments,  at the contrary the $\Gamma-\liminf$ is more involved and represents the biggest part of the proof. For instance, no slicing argument is permitted due to the weak regularity of $v$ (the divergence of $v$ is a measure, but this does not mean $v\in BV$ in general). Thus the liminf inequality has to be performed directly in dimension 2 via new techniques.  Also, some objects related to our new term with $d_\varphi$ have to be introduced: in the limiting process,  the good set to consider is rather $\{d_\varphi(x,x_0)\leq \delta \}$  than the usual $\{\varphi(x)\leq \delta\}$ which  leads to some technical issues.

\paragraph{Structure of the paper} Section 2 presents several technical tools from different origins: we start with well-known notions on the measure $\Hh^1$ and on connected sets that we want to recall, then we introduce and study the geometrical quantity $I_\lambda$ which will be crucial in our analysis, we state an easy estimate on the total variation of functions of one variable, and finally we produce a recovery sequence useful in the $\Gamma-\limsup$ inequality following a standard construction and adapting it to our needs.
 Later, Section 3 gives the proof of a $\Gamma-\liminf$-type inequality that we need for all the three problems. In section 4 we explain how to handle the Steiner problem and in Section \ref{Section-approx-average} both the average distance and the $p-$compliance problem.  At the end of Section \ref{Section-approx-average}, a discussion about the existence of minimizers for $\ve>0$ in our approximation is given.  Finally, in the the last Section 6 we use the approximation result that we proved in order to produce some numerical results. For the sake of simplicity, only the $2-$compliance problem has been considered, but we are confident that adaptations and improvements of the same method could be adapted (and improved) to more general settings.

\paragraph{Context and Acknowledgments} The work of the authors is both part of the work of the ANR research project GEOMETRYA, ANR-12-BS01-0014-01, and of the PGMO research project MACRO ``Mod\`eles d'Approximation Continue de R\'eseaux Optimaux''. The support of these research projects is gratefully acknowledged. The second and third author announced the theoretical part of this work, with particular attention to the case of the Steiner problem in \cite{LemSanCRAS}. The question of applying these techniques to the Steiner problem was raised at the Italian Conference on Calculus of Variations in Levico Terme, February 2013, by G. P. Leonardi and E. Paolini. We sincerely acknowledge them for the fruitful discussions, as well as A. Chambolle for some initial suggestions about the dual formulation of the Compliance problem, which lead to the genesis of this paper. The general question of finding an energy which forces connectedness of minimizers was first asked by Edouard Oudet to the second author in Pisa in the early 2010. We acknowledge him for bringing out to our attention this nice question.
 
\section{Technical tools}

Our proof of $\Gamma$-convergence will use  some tools from the Geometric measure theory. For the sake of clarity, we present   in this section some particular results of independent interest that will be used later. Some of them are standard but difficult to find in the literature (like for instance Proposition \ref{rectifiability} about existence of tangent line in connected sets of finite length, or the covering Lemma \ref{covering1}), but some others are more original  like Lemma \ref{Lemme2} which is one of the key estimate leading to our main result.







Let $A$ be a subset of $\R^2$. The Hausdorff measure of $A$ is
\begin{eqnarray}\label{defHH}
\Hh^1(A)=\lim_{\tau \to 0^+} \Hh^1_{\tau}(A),
\end{eqnarray}
where
\begin{eqnarray}
 \Hh^1_{\tau}(A):=\inf \left\{ \sum_{i=1}^{+\infty} \diam(A_i) \quad ; \quad A\subset
\bigcup_{i=1}^{+\infty}A_i \; \text{ and } \; \diam(A_i)\leq \tau\right\} .\label{infinmum}
\end{eqnarray}
It is well known that $\mathcal{H}^1$ is an outer measure on $\R^2$ for which the Borel sets are measurable sets. Moreover, its restriction to lines coincides with the standard Lebesgue measure. We will also denote by $d_H$ the Hausdorff distance between two closed sets $A$ and $B$ of $\R^2$ defined by
$$d_H(A,B):=\max\big \{\sup_{x\in A} dist(x,B),\sup_{x\in B}dist(x,A)\big \}.$$
First we recall a standard result about densities.

\begin{prop}{\rm \cite[Theorem 2.56.]{afp} } \label{densityLemma} Let $\Omega\subset \R^2$,  $\mu$ be a positive Radon measure in $\Omega$ and let $A\subset \Omega$ be a Borel set such that
$$\limsup_{r\to 0}\frac{\mu(B(x,r))}{2r}\geq 1 \quad \forall x\in A.$$
Then
$$\mu\geq \Hh^1|_{A}.$$
\end{prop}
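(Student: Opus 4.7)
The plan is to upgrade the pointwise lower density bound into a measure-theoretic comparison through a Vitali-type covering argument. I would first reduce the claim to proving $\mu(E)\ge \Hh^1(E)$ for every Borel subset $E\subset A$: since the density hypothesis passes to subsets of $A$, this then yields $\Hh^1|_A(B)=\Hh^1(B\cap A)\le \mu(B\cap A)\le \mu(B)$ for every Borel $B\subset \Omega$. Because $\mu$ is Radon it is outer regular on Borel sets, so it further suffices to show $\mu(U)\ge \Hh^1(E)$ for every open $U$ with $E\subset U\subset \Omega$.

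Fix such a $U$ together with parameters $\eps\in(0,1)$ and $\tau>0$. The density assumption produces, for every $x\in E$, arbitrarily small radii $r<\tau/2$ satisfying $\overline{B}(x,r)\subset U$ and $\mu(\overline{B}(x,r))\ge (1-\eps)\cdot 2r$. The resulting collection forms a fine covering of $E$ by closed balls, to which I would apply the version of Vitali's covering theorem suited to the Hausdorff measure $\Hh^1$ (valid because closed balls have bounded eccentricity): this extracts a countable \emph{disjoint} subfamily $\{\overline{B}(x_i,r_i)\}_i$ with
$$\Hh^1\Bigl(E\setminus \bigcup_i \overline{B}(x_i,r_i)\Bigr)=0.$$
Each selected ball has diameter at most $\tau$, and the $\Hh^1$-negligible leftover can be covered by pieces of arbitrarily small total diameter, all of size $\le\tau$; so, by disjointness,
$$\Hh^1_\tau(E)\le \sum_i \diam(\overline{B}(x_i,r_i))=\sum_i 2r_i \le \frac{1}{1-\eps}\sum_i \mu(\overline{B}(x_i,r_i)) \le \frac{\mu(U)}{1-\eps}.$$
Sending $\tau\to 0$, then $\eps\to 0$, and finally contracting $U$ down to $E$ by outer regularity yields $\Hh^1(E)\le \mu(E)$.

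The only delicate ingredient is the precise covering theorem one invokes: the classical Besicovitch covering theorem would only return a bounded number of disjoint subfamilies rather than a single one, and an elementary $5r$-covering lemma would introduce a spurious multiplicative constant $5$ that ruins the sharp inequality. The correct tool is the Vitali covering theorem adapted to Hausdorff measure for fine families of closed balls in $\R^n$ (see, e.g., Falconer, \emph{The Geometry of Fractal Sets}, Theorem~1.10, which is the same mechanism underlying Theorem~2.56 in \cite{afp}), and it is precisely this single-family statement that delivers the optimal constant $1$.
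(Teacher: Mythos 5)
The paper does not prove this proposition: it is quoted directly from Ambrosio--Fusco--Pallara \cite[Theorem~2.56]{afp}, so there is no internal proof to compare your argument against. Your Vitali-type covering argument is the standard route to such a density lower bound and is correct: reduce by outer regularity of the Radon measure $\mu$ to bounding $\Hh^1(E)$ by $\mu(U)$ for $E\subset A$ Borel and $U\supset E$ open, use the density hypothesis to produce a fine cover of $E$ by closed balls with $\mu(\overline{B}(x,r))\geq(1-\eps)2r$, apply the Vitali covering theorem for Hausdorff measure to extract a single \emph{disjoint} subfamily whose complement in $E$ is $\Hh^1$-null, and conclude by summing and passing to the limit. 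Two refinements are worth making. First, the a priori hypothesis $\Hh^1(E)<\infty$ appearing in some formulations of the Vitali theorem (e.g.\ Falconer's Theorem~1.10) is not available at that stage of your argument; you should invoke a version that dispenses with it, such as Mattila's Theorem~2.8 \cite{mat}, which gives the dichotomy ``either $\sum_i 2r_i=\infty$ or $\Hh^1\bigl(E\setminus\bigcup_i B_i\bigr)=0$''; the divergent horn is harmless here because disjointness in $U$ forces $\mu(U)\geq(1-\eps)\sum_i 2r_i=\infty$ and the inequality is trivial. Second, your remark that a raw $5r$-covering loses a factor of $5$ is correct if one covers $E$ by the dilated balls; but note the Vitali theorem you use is in fact \emph{derived} from the $5r$-lemma via the tail estimate $\Hh^1_\delta\bigl(E\setminus\bigcup_{i\leq N}B_i\bigr)\leq 5\sum_{i>N}2r_i\to 0$, so no constant is lost in the limit -- the point being that the selected balls, not their dilations, ultimately cover $E$ up to a null set. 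Your instinct that Besicovitch would only give boundedly many disjoint subfamilies, hence a dimensional constant, is also right.
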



It is well known that compact connected sets with finite one dimensional Hausdorff measure enjoy some nice regularity  properties that we summarize in the following.


\begin{prop} \label{rectifiability}Let $K\subset \R^2$ be a compact connected set such that $L:=\Hh^1(K)<+\infty$. Then  :
\begin{enumerate}[{\rm($i$)}]
\item There exists a Lipschitz surjective mapping $f:[0,L]\to K$. In particular $K$ is arcwise connected.
\item For $\Hh^1$-a.e. $x \in K$ there exists a tangent line  $T_x\subset \R^2$, in the sense that
$$\lim_{r\to 0} \frac{d_H\big(K\cap B(x,r), T_x \cap B(x,r)\big)}{r} = 0.$$
\item For all $ \varepsilon \in (0,1/2)$ there exists some $r_0>0$ such that
\begin{eqnarray}
\pi(K\cap B(x, r)) \supseteq  T_x\cap B(x,(1-\varepsilon)r)\quad \forall r\leq r_0, \label{measproj}
\end{eqnarray}
where $\pi :\R^2\to T_x$ denotes the orthogonal projection onto the line $T_x$ identified with $\R$ with origin at  $x$.
\end{enumerate}
\end{prop}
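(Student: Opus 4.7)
The three parts of the proposition are classical, and my plan is to handle them in order, leaning on well-known structure results and making connectedness do the final work in each case.

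For (i), the plan is to first establish arcwise connectedness: a compact connected set of finite $\Hh^1$ measure is arcwise connected. One standard route is via Gołąb's semicontinuity theorem, approximating $K$ by a sequence of polygonal paths between two chosen points and extracting a Hausdorff limit, which must be a connected subset of $K$ of finite length joining the two points; by a further compactness argument (or a direct appeal to Eilenberg--Harrold) one extracts a homeomorphic image of an interval. To promote this to the global Lipschitz parametrization $f:[0,L]\to K$, the idea is to recursively subtract simple arcs from $K$: at each step, pick two points in the remaining set that maximize the length of a connecting arc, then remove the interior of this arc, and iterate. The $\sigma$-finiteness of $\Hh^1$ and the fact that $K$ remains connected after each removal (up to closing small loops) allow one to exhaust $K$ and then concatenate the pieces into a 1-Lipschitz traversal. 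Standard references (Ambrosio--Tilli, Falconer) carry this out.

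For (ii), use (i) to deduce that $K$ is $\Hh^1$-rectifiable, so by Besicovitch/Preiss there is an approximate tangent line and density equal to $1$ at $\Hh^1$-a.e.\ $x$. The key step, which is the main obstacle, is upgrading the approximate tangent to a Hausdorff-sense tangent. The plan is to argue by contradiction: suppose along $r_n\to 0$ there is $y_n\in K\cap B(x,r_n)$ with $\mathrm{dist}(y_n,T_x)\geq \eta r_n$ for some $\eta>0$. By connectedness, there exists a subcontinuum $K_n\subset K\cap B(x,2r_n)$ joining $x$ to $y_n$, and by Gołąb semicontinuity $\Hh^1(K_n)\geq |x-y_n|\geq \eta r_n$. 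Meanwhile the approximate tangency forces a subcontinuum of comparable length $\approx 2r_n$ lying near $T_x$. The cost of keeping both pieces and reconnecting them inside a ball of radius $\approx 2r_n$ pushes $\Hh^1(K\cap B(x,(2+\epsilon)r_n))/(2(2+\epsilon)r_n)$ strictly above $1$, contradicting density $1$.

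For (iii), connectedness again supplies the clinching argument. Consider the connected component $C$ of $K\cap \overline{B(x,r)}$ containing $x$; its projection $\pi(C)$ is a connected subset of $T_x$, hence an interval $I$ containing $0$. It suffices to show $I\supseteq T_x\cap B(x,(1-\varepsilon)r)$. Suppose not: then there is $y\in T_x\cap B(x,(1-\varepsilon)r)$ with the orthogonal line $\ell_y$ missing $C$. By (ii), for $r$ small enough the set $K\cap B(x,r)$ lies within Hausdorff distance $\varepsilon r/4$ of $T_x\cap B(x,r)$, and in particular contains points $p_{\pm}\in K$ with $\pi(p_{\pm})$ within $\varepsilon r/4$ of $\pm r$. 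Moreover those points lie in $C$ because $K$ is connected and they can be joined to $x$ through short paths inside $B(x,r)$ (again by (ii)). Then $\pi(C)\ni \pi(p_{\pm})$, so $\pi(C)$ contains points on either side of $y$, forcing $y\in\pi(C)$ by connectedness of the interval $\pi(C)$: contradiction. Taking $r_0$ so that the Hausdorff estimate from (ii) holds with tolerance $\varepsilon r/4$ completes the proof.
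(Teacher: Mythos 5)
Your treatment of~(i) is a reasonable sketch of the standard construction; the paper simply cites David's book, so the only issue there is expository. The substantive problems are in (ii) and (iii), and both trace back to the same omission: after using the Lipschitz surjection $f$ to establish rectifiability, you never use $f$ again, whereas the paper leans on it (via Rademacher's theorem) in exactly the places where your argument leaves holes.

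In (ii) you only establish one half of the Hausdorff convergence. Your contradiction argument (take $y_n\in K\cap B(x,r_n)$ with $\mathrm{dist}(y_n,T_x)\geq\eta r_n$, produce a subcontinuum, derive a density excess) shows that points of $K$ near $x$ cannot stray far from $T_x$; this is the estimate \eqref{firsthalf} in the paper (and the paper gets it more directly from lower Ahlfors regularity and the cone condition, without invoking density~$1$). But the Hausdorff distance $d_H\big(K\cap B(x,r), T_x\cap B(x,r)\big)$ also requires the converse: every point of $T_x\cap B(x,r)$ must be within $o(r)$ of $K$. Nothing in your proof addresses this. The approximate tangent/cone condition is an upper bound on the measure of $K$ away from $T_x$; it gives no lower bound on how much of $T_x$ is actually covered by $K$, and density~$1$ plus the cone condition do not by themselves rule out, say, a gap in $\pi(K\cap B(x,r))$. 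The paper handles this half by going back to $f$: at an $\Hh^1$-a.e.\ point $x=f(t_0)$ where $f$ is differentiable, the expansion $f(t_0+h)=f(t_0)+hf'(t_0)+\xi(h)h$ places, for every $t\in T_x\cap B(x,(1-\delta)r)$, a point $y=f(t)\in K\cap B(x,r)$ at distance $\leq \tfrac{\delta}{2}r$ from $t$. Without some such argument your (ii) is incomplete.

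In (iii) the gap becomes fatal. You invoke "the Hausdorff estimate from (ii)" to produce $p_\pm\in K$ with $\pi(p_\pm)$ within $\varepsilon r/4$ of $\pm r$; this requires precisely the unproved half of (ii). Worse, you then assert that $p_\pm$ lie in the connected component $C$ of $K\cap\overline{B(x,r)}$ containing $x$ because "they can be joined to $x$ through short paths inside $B(x,r)$ (again by (ii))." Statement (ii) is a Hausdorff-distance estimate; it provides no path, and connectedness of $K$ only gives a path from $p_\pm$ to $x$ \emph{somewhere in $K$}, not one that stays in $B(x,r)$. In general a connected set can enter and leave a ball many times, and the component of a given point in $K\cap\overline{B(x,r)}$ need not contain other nearby points of $K$. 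The paper sidesteps this entirely by working with the arc $f([t_0,t_0+\lambda_\varepsilon r])$: it is connected by construction, stays in $B(x,r)$ by the differentiability estimate, and hence its projection is an interval containing $0$ and a point near $(1-\varepsilon)r$; the same on the other side gives \eqref{measproj}. To repair your proof you would need to supply both the missing half of (ii) and a genuine connectivity argument in (iii), and the shortest route to both is the one the paper takes — use the parametrization $f$ and Rademacher, not just the abstract rectifiability it implies.
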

\begin{proof} The proof of $(i)$ can be found for instance in \cite[Proposition 30.1. page 186]{d}. We now use $(i)$ to prove $(ii)$. Indeed, a direct consequence of $(i)$ is that $K$ is rectifiable, which implies that $K$ admits an approximate tangent line at  $\Hh^1$-a.e. point $x \in K$ (see for e.g. \cite[Theorem 15.19]{mat}).  This means that there exists a line $T_x \subset \R^2$ for which
\begin{eqnarray}
\lim_{r\to 0}\frac{\Hh^1\big(K\cap \big(B(x,r)\setminus W(x,s)\big)\big)}{r}=0 \quad \forall s>0, \label{density10}
\end{eqnarray}
where $W(x,s)$ is the open cone of aperture $s$ defined by
$$W(x,s):=\Big\{y\in \R^2; d(y,T_x)< s\modvec y-x\modvec \Big\}.$$
We will prove that, from the connectedness of $K$, the above measure-theoretical estimate may be strengthened. In particular, we claim that
\begin{eqnarray}
\lim_{r\to 0}  \frac{\sup\{d(y,T_x) ; y \in K\cap B(x,r)\}}{r}=0.\label{firsthalf}
\end{eqnarray}
This follows from the fact that due to the connectedness of $K$, we automatically have a lower Ahlfors-regular condition, namely
$$\Hh^1(K\cap B(x,r))\geq r \quad \forall x \in K, \; \forall r<\diam(K)/2.$$
Consequently, for $r$ small enough (depending on $s$) the set
$$K\cap \big(B(x,2r)\setminus B(x,r)\big)\setminus W(x,2s)$$ is empty, otherwise $K\cap B(x,2r)\setminus W(x,s)$ would contain a piece of measure at least $sr/10$ which would be a contradiction to \eqref{density10} (this is actually \cite[Exercice 41.21.3. page 277]{d}). If the sets $K\cap \big(B(x,2r)\setminus B(x,r)\big)\setminus W(x,2s)$ are empty for every small $r$, by union the same will be true for $K\cap \big(B(x,2r)\setminus \{x\}\big)\setminus W(x,2s)$, and so follows the claim.

It remains to control  the second half of the Hausdorff distance and the measure of the projection. For this purpose we furthermore assume that $x=f(t_0)$ is a   point of differentiability for $f$, which can be chosen $\Hh^1$-a.e. in $t\in [0,L]$ by Rademacher's theorem (\cite[Theorem 7.3.]{mat}), and therefore $\Hh^1$-a.e. in $x\in K$ (because the image of a $\Hh^1$-nullset by $f$ is still a $\Hh^1$-nullset in $K$).  It is clear that  $T_x=x+\mathrm{Span}(f'(t_0))$. Moreover there exists a $\xi(h)\to 0$ such that
\begin{eqnarray}
f(t_0+h)=f(t_0)+hf'(t_0)+\xi(h)h. \label{difff}
\end{eqnarray}
Let us identify $T_x\cap B(x,r)$ with the interval $[-r,r]$, and let $\delta>0$ be given. Then we chose $r_0$  small enough so that $|\xi(r)|\leq \delta/2$ for all $0<r\leq r_0$. It follows that for all $r\leq r_0$ and for every $t \in T_x\cap B(x,(1-\delta)r)$, thanks to \eqref{difff}, setting $y=f(t)$ we just have found a point $y\in K\cap B(x,r)$ such that $d(y,t)\leq \frac\delta 2r$. This implies that
$$\frac{1}{r}\sup \Big\{ d(t, K\cap B(x,r)) ; t \in T_x\cap B(x,r) \Big\}\leq \delta \quad \forall 0<r\leq r_0,$$
and since $\delta$ is arbitrary we have proved that
$$\lim_{r\to 0}\frac{1}{r}\sup \Big\{ d(t, K\cap B(x,r)) ; t \in T_x\cap B(x,r) \Big\}=0,$$
which together with \eqref{firsthalf} yields
$$\lim_{r\to 0}\frac{1}{r}d_H\Big(K\cap B(x,r),T_x\cap B(x,r)\big)=0.$$
We  finally prove \eqref{measproj}. Let $\varepsilon>0$ be fixed and let $r_0$ be small enough so that $|\xi(r)|\leq \varepsilon/10$ for all $r\leq r_0$. Let us set $\lambda_\varepsilon:=(1-\frac{11}{10}\varepsilon)\in (0,1)$. Due to  \eqref{difff}, we know that the curve $f([t_0,t_0+\lambda_\varepsilon r])$ stays inside the ball $B(x,r)$.  Since the image of a connected set by the continuous application $\pi$ is still connected, we deduce that $\pi(f([t_0,t_0+\lambda_\varepsilon r]))$ is an interval, that contains $x$ (identified with the origin on the line $T_x$) and contains $\pi(f(t_0+\lambda_\varepsilon r))$, which lies at distance less than $\varepsilon r/10$ from the point $(1-\frac{11}{10}\varepsilon r)$. We then conclude that
$$\pi(K\cap B(x,r))\supseteq \pi\big(f([t_0,t_0+\lambda_\varepsilon r])\big)\supseteq [f(t_0),f(\lambda_\varepsilon)r]\supseteq [0,(1-\varepsilon)r].$$
With a similar reasoning on the curve $f([t_0-\lambda_\varepsilon r,t_0])$ we get
$$\pi(K\cap B(x,r))\supseteq  [-(1-\varepsilon)r,(1-\varepsilon)r],$$
as desired.
\end{proof}

We will also need an easy covering Lemma, for which we give a detailed proof for the reader's convenience.

\begin{lem} \label{covering1} Let $A\subset \R^2$ be a compact set. Then for any $0\leq L<\Hh^1(A)$ there exists $r_0>0$ such that for every $r<r_0$ there exist a finite number of balls $B(x_i,r)$,  with $1\leq i \leq N$  satisfying the following properties:
\begin{eqnarray}
i) & & \text{the balls } B(x_i,r/2) \text{ are disjoint,} \notag \\
ii) & & x_i \in A  \text{ for all } i \text{ and } A \subset \bigcup_{i=1}^N B(x_i,r),  \notag \\
iii) & & L <  2Nr. \notag
\end{eqnarray}
\end{lem}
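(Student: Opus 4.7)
My plan is to construct the covering by a greedy/maximal argument and then to relate the number of balls $N$ to $\Hh^1(A)$ via the very definition of the Hausdorff measure given in \eqref{defHH}-\eqref{infinmum}.

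First, for a fixed $r>0$, I would build the family $\{x_i\}$ by a standard maximal argument. Pick any $x_1\in A$; having chosen $x_1,\dots,x_k\in A$ with $B(x_j,r/2)$ pairwise disjoint, if some point $x\in A$ satisfies $|x-x_j|\ge r$ for every $j\le k$, add it as $x_{k+1}$, which keeps the disjointness of the half-radius balls. Since $A$ is compact, it is bounded, so this process must stop after some finite number $N=N(r)$ of steps: indeed each $B(x_i,r/2)$ contains a Lebesgue volume $\pi r^2/4$, and they all lie in a fixed bounded set, giving a uniform bound on $N$ for each fixed $r$. By the stopping criterion, for every $x\in A$ there is some $i$ with $|x-x_i|<r$, which yields simultaneously (i) and (ii).

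The heart of the argument is (iii). Since the $N$ balls $B(x_i,r)$ cover $A$ and each has diameter $2r$, they form an admissible cover in the definition \eqref{infinmum} of $\Hh^1_\tau$ as soon as $\tau\ge 2r$; hence
\begin{equation*}
\Hh^1_{2r}(A)\;\le\;\sum_{i=1}^N \diam(B(x_i,r))\;=\;2Nr.
\end{equation*}
On the other hand, by \eqref{defHH} the map $\tau\mapsto \Hh^1_\tau(A)$ is non-increasing and converges to $\Hh^1(A)$ as $\tau\to 0^+$. Since $L<\Hh^1(A)$, there exists $\tau_0>0$ such that $\Hh^1_\tau(A)>L$ for every $\tau<\tau_0$. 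Setting $r_0:=\tau_0/2$, for every $r<r_0$ we get $2r<\tau_0$ and therefore
\begin{equation*}
L\;<\;\Hh^1_{2r}(A)\;\le\;2Nr,
\end{equation*}
which is exactly (iii).

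There is no genuine obstacle here: the argument is a combination of a standard Vitali-type maximal selection and the monotonicity of $\Hh^1_\tau$ in $\tau$. The only point to keep in mind is that the threshold $r_0$ has to be chosen once and for all from $L$ (through the convergence $\Hh^1_\tau(A)\to \Hh^1(A)$), while the cardinality $N=N(r)$ and the centres $x_i$ naturally depend on $r$; the inequality $L<2N(r)r$ must hold uniformly for every $r<r_0$, which is precisely what the above estimate provides.
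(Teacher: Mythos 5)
Your proof is correct and follows essentially the same route as the paper's: a greedy selection of $r$-separated points in $A$ (giving disjoint half-radius balls and a cover by full-radius balls), followed by using the cover as a competitor in the definition of $\Hh^1_{2r}$ and choosing $r_0$ so that $\Hh^1_{2r}(A)>L$ for $r<r_0$. The paper writes the same argument with $\tau=2r$ in place of your $2r$, but the ideas and estimates coincide.
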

\begin{proof} Given $L< \Hh^1(A)$, 
define $\tau_0$ such that, for ever $\tau \in (0,\tau_0)$ we have
\begin{eqnarray}
L\leq \Hh_\tau^1(A)\leq \Hh^1(A). \label{ineqqq2}
\end{eqnarray}
Now by compactness of $A$, it is easy to find by induction  a finite number of points  $x_i \in A$, $1\leq i \leq N$, with the property that
$$A \subset \bigcup_{i=1}^N B(x_i,\tau /2),$$
and such that the $B(x_i,\tau /4)$ are disjoint. Since the family $\Big\{B(x_i,\tau/2)\Big\}_{i}$ is admissible in the  infimum  \eqref{infinmum}, we have that
\begin{eqnarray}
\Hh_\tau^1(A)\leq \sum_{i=1}^N\tau. \label{ineqqq3}
\end{eqnarray}
Gathering together \eqref{ineqqq2} and \eqref{ineqqq3} we obtain 
$$L<\sum_{i=1}^N\tau,$$
and setting $r_0=\tau_0/2$ achieves the proof.
%
\end{proof}


\subsection{A topological lemma}

The following Lemma is quite obvious for arcwise connected sets, but in the sequel we will need to apply it in its full general version.

\begin{lem} \label{connexe} Let $A\subset \R^2$ be a compact and connected set,  $x\in A$ and $r \in (0,\frac 12\diam(A))$. Then the connected component of $\overline{B}(x,r)\cap A$ containing $x$, also contains a point of $A \cap \partial B(x,r)$.
\end{lem}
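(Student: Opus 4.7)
The plan is to argue by contradiction using the classical fact that in a compact Hausdorff space, the connected component of a point coincides with its quasi-component, i.e.\ with the intersection of all clopen (in the compact set) subsets containing the point. Set $K:=\overline{B}(x,r)\cap A$, which is compact, and let $C$ be the connected component of $x$ in $K$. Suppose, for the sake of contradiction, that $C\cap\partial B(x,r)=\emptyset$.

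By the characterization of $C$ as a quasi-component, for every $z\in K\cap\partial B(x,r)$ there is a clopen subset $U_z$ of $K$ with $x\in U_z$ and $z\notin U_z$. Since $K\cap\partial B(x,r)$ is compact and the family $\{K\setminus U_z\}_z$ covers it, a finite subcover yields, after taking the intersection of the corresponding $U_z$'s, a single clopen subset $U$ of $K$ such that $x\in U$ and $U\cap\partial B(x,r)=\emptyset$. Consequently $U\subset K\setminus\partial B(x,r)\subset B(x,r)$ (the open ball).

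Next I would upgrade $U$ from clopen in $K$ to clopen in $A$. It is closed in $A$ because $U$ is closed in $K$ and $K$ is closed in $A$. For openness, write $U=W\cap K$ with $W$ open in $A$; since $U\subset B(x,r)$, one has
\[
U=W\cap K=W\cap\overline{B}(x,r)\cap A=W\cap B(x,r)\cap A,
\]
which is open in $A$. Because $A$ is connected and $U$ is a nonempty clopen subset of $A$, we conclude $U=A$, hence $A\subset\overline{B}(x,r)$. This forces $\diam(A)\le 2r$, contradicting the hypothesis $r<\tfrac12\diam(A)$.

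The only non-elementary ingredient is the identification of components with quasi-components in a compact Hausdorff space, which I expect to be the main conceptual obstacle for a reader unfamiliar with general topology; everything else is a routine verification that clopenness transfers correctly between $K$ and $A$ via the separation provided by the sphere $\partial B(x,r)$.
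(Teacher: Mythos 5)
Your proof is correct, but it takes a genuinely different route from the paper's. The paper argues directly via the characterization of compact connected metric spaces as well-chained spaces: it constructs $\varepsilon$-chains from $x$ to a point outside $\overline{B}(x,r)$, picks the last chain point before the first exit, passes to a subsequential limit to land on $\partial B(x,r)$, and then shows this limit point is well-chained to $x$ inside $A\cap\overline{B}(x,r)$. You instead invoke the identification of connected components with quasi-components in compact Hausdorff spaces (the \v{S}ura-Bura lemma), reduce by compactness of $A\cap\partial B(x,r)$ to a single clopen $U\subset K$ separating $x$ from the sphere, and then make the geometric observation that such a $U$, being contained in the open ball, is automatically clopen in all of $A$ --- contradicting connectedness of $A$. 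Both arguments use compactness in an essential way, but at different points: the paper to extract a convergent subsequence of chain endpoints, you to pass from infinitely many clopen separators to a single one. The paper's route is more hands-on and requires only the elementary notion of $\varepsilon$-chains; yours is shorter and cleaner once one grants the (somewhat deeper) component-equals-quasi-component theorem, and it produces the contradiction without any limiting argument. Your key added observation --- that $U\subset B(x,r)$ upgrades ``clopen in $K$'' to ``clopen in $A$'' via $W\cap\overline{B}(x,r)\cap A=W\cap B(x,r)\cap A$ --- is exactly what makes the abstract approach work, and is carried out correctly.
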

\begin{proof}   We shall use the following characterization of a connected set :
\begin{center}
\emph{(P) a compact metric space  $A$ is  connected if and only if it is well chained.}
\end{center}
 By ``well chained" we mean that for any two points $x,y \in A$ and for any $\varepsilon>0$, one can find an $\varepsilon$-chain of points $\{x_i\}_{i=1}^N\subset A$ such that $x_0=x$, $x_N=y$ and $d(x_{i+1},x_i)\leq \varepsilon$ for all $i$.
 A proof of the above claim can be found for instance in \cite[Proposition 19-3]{ch}.

 Now since $r \leq \diam(A)/2$, we can find a point $y \in A\setminus \overline{B(x,r)}$. Let   $\varepsilon>0$ be fixed. There exists an $\varepsilon$-chain $\{x_i\}$ that connects $x$ to $y$ in $A$. Let $x_\varepsilon$ be the last point of the chain inside $B(x,r)$ before the first exist, i.e. $x_\varepsilon := x_{i_0}$ where $i_0$ is defined as
 $$i_0:=\max \{i\;:\; x_j \in \overline{B}(x,r) \text{ for all } j\leq i \}.$$
Now we let $\varepsilon \to 0$. Precisely, for a subsequence $\varepsilon_n \to 0$ we can assume that $x_{\varepsilon_n}$ converges to some point $x_0$ inside the compact set $A\cap \overline{B}(x,r)$. It is easy to see that $x_0 \in \partial B(x,r)$.

 We claim moreover that $x_0 \in K$, where   $K\subset A\cap \overline{B}(x,r)$ is the connected component of $A\cap \overline{B}(x,r) $ that contains $x$. This will achieve the proof of the Lemma.

  To prove the claim, we introduce the notation $z \sim_A x $ to say that $z$ is well-chained with $x$ in $A$ (which means that for every $\varepsilon>0$ one can find an $\varepsilon$-chain of points  inside $A$ from $x$ to $z$) and we consider the set
 $$C := \{z  \in A \cap \overline{B}(x,r) \; ; \; z \sim_{A\cap \overline{B}(x,r)} x\}.$$
 Our goal is to prove that $C \subseteq K$. To see this we first notice that $C$ is closed because if $z_n$ is a sequence of points converging to some $z$, then for $n$ large enough (depending on $\varepsilon$) the $\varepsilon$-chain associated with $z_n$ will be admissible for $z$ too, with $z$ added at the end.   Thus $C$ is a compact set, and $C$ is naturally well-chained by its definition. Therefore $C$ is connected thanks to Property (P), and contains $x$. Therefore $C\subseteq K$.

To finish the proof it is enough to notice that $x_0$ naturally belongs to $C$ because it is a limit point of points $\varepsilon$-chained to $x$ in $A\cap \overline{B}(x,r)$, for arbitrary $\varepsilon >0$.
\end{proof}

\subsection{A lower bound on some geometric integral quantity}

For any closed set $A\subset \R^2$, $\lambda >0$ and direction $\nu \in \Ss$, we denote by $A_{\lambda,\nu}$ the $\lambda$-enlargment of $A$ in the direction $\nu$ defined by
$$A_{\lambda,\nu}:=\{ x+t \nu \; ;\; |t|\leq \lambda \text{ and } x \in A\}.$$
It is most obvious to check from the closeness of $A$ that $A_{\lambda,\nu}$ is also a closed set. 
Next, we denote by $\mathscr{L}^2$   the two dimensional Lebesgue measure on $\R^2$ and consider for $\nu \in \Ss$ the function $\nu \mapsto \mathscr{L}^2(A_{\lambda,\nu})$. We claim that this function is $\Hh^1$-measurable. For this purpose let us prove that it is upper-semicontinuous. Indeed,  if $x\in \R^2$ is fixed and $\nu_{\varepsilon}\to \nu$ we can get from the closeness of $A_{\lambda,\nu}$ that
\begin{eqnarray}
\limsup_{\varepsilon} {\bf 1}_{A_{\lambda,\nu_\varepsilon}}(x)\leq {\bf 1}_{A_{\lambda,\nu}}(x). \label{uppsemicont}
\end{eqnarray}
Assume by contradiction that  $x \in \R^2 \setminus A_{\lambda,\nu}$, and that there exists a sequence $\nu_\varepsilon \to \nu$ for which $x\in A_{\lambda,\nu_\varepsilon}$ for all $\varepsilon$. Then there exists $a_\varepsilon \in A$ and $|t_\varepsilon|\leq \lambda$ such that $x=a_\varepsilon + t_\varepsilon \nu_\varepsilon $. Up to extract a subsequence, $t_\varepsilon$ converges to some $t$ satisfying $|t|\leq \lambda$ and $a_\varepsilon$ converges in $a$ to $A$, thus passing to the limit along this subsequence we obtain $x=a + t\nu \in A_{\lambda,\nu}$, a contradiction. Using \eqref{uppsemicont} and Fatou Lemma we deduce that $\nu \mapsto \mathscr{L}^2(A_{\lambda,\nu})$ is upper-semicontinuous, and therefore Borel-measurable. By consequence it is also $\Hh^1$-measurable and the following quantity is well defined.

\begin{eqnarray}
I_{\lambda}(A):=\frac{1}{2\lambda \pi}\int_{\Ss} \mathscr{L}^2(A_{\lambda,\nu}) d \Hh^{1}(\nu). \label{defIlambda}
\end{eqnarray}
We  summarize some elementary facts regarding $I_\lambda$ in the following Lemma.

\begin{lem} \label{Lemmaproj}For any direction $\nu \in \mathbb{S}^1$ we denote by $\pi_\nu$ the orthogonal projection onto the vectorial line directed by $\nu$. The following facts hold true.
\begin{enumerate}[i)]
\item For any closed set $A\subset \R^2$, $\nu\mapsto \mathcal{H}^1(\pi_\nu(E))$ is  $\mathcal{H}^1$-measurable on $\mathbb{S}^1$ and  we have
$$I_\lambda(A)\geq \frac{1}{\pi} \int_{\mathbb{S}^1}\mathcal{H}^1(\pi_\nu(E)) d\mathcal{H}^1(\nu).$$ 
\item If $\{E_k\}_{k \in \N}$ is a disjoint sequence of closed sets which satisfies
$$dist(E_k,E_{k'})\geq 2\lambda \quad \text{ for all } k\not = k'$$
then
$$I_{\lambda}(\bigcup_{k \in \N} E_k) = \sum_{k \in \N} I_{\lambda}(E_k).$$
\end{enumerate}
\end{lem}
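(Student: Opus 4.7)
My plan is to prove both items by slicing with lines in direction $\nu$ and applying Fubini, which is where the geometric role of $I_\lambda$ becomes transparent. For item (i), fix $\nu\in\Ss$ and parametrize the lines parallel to $\nu$ by the perpendicular offset $y\in \nu^\perp$, writing $\ell_y := y + \R\nu$. The slice of $A_{\lambda,\nu}$ along $\ell_y$ is precisely $(A\cap \ell_y) + [-\lambda,\lambda]\nu$, which has $\mathcal{L}^1$-length at least $2\lambda$ as soon as $A\cap \ell_y \neq \emptyset$. Since $\{y\in\nu^\perp : A\cap \ell_y \neq \emptyset\} = \pi_{\nu^\perp}(A)$, Fubini in direction $\nu$ yields
\[
\mathscr{L}^2(A_{\lambda,\nu}) \geq 2\lambda\, \Hh^1(\pi_{\nu^\perp}(A)).
\]
Plugging this into the definition of $I_\lambda$ and then using the isometry $\nu\mapsto \nu^\perp$ on $\Ss$ (a rotation by $\pi/2$, hence $\Hh^1$-preserving) to relabel the integrand gives the inequality of~(i).

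The measurability of $\nu\mapsto \Hh^1(\pi_\nu(A))$ is the only really delicate point, and I would reduce first to the case of compact $A$ by writing $A = \bigcup_n (A\cap \overline{B}(0,n))$ and applying monotone convergence. This reduction is needed because the projection of a closed unbounded set onto a line is only $F_\sigma$ in general, so one cannot argue directly at the level of $A$. For compact $A$, the graph $\Gamma := \{(\nu,t)\in\Ss\times\R : t\in \pi_\nu(A)\}$ is the image of the compact set $\Ss\times A$ under the continuous map $(\nu,x)\mapsto (\nu, x\cdot \nu)$, hence compact and Borel; Tonelli's theorem applied to $\mathbf{1}_\Gamma$ then yields Borel-measurability of $\nu\mapsto \Hh^1(\pi_\nu(A)) = \int_\R \mathbf{1}_\Gamma(\nu,t)\,dt$.

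For item (ii), the key observation is that the $2\lambda$-separation hypothesis is inherited by one-dimensional slices: for any line $\ell$ parallel to $\nu$, whenever $E_k\cap \ell$ and $E_{k'}\cap \ell$ are both non-empty, their distance measured inside $\ell$ is no smaller than $dist(E_k, E_{k'})\geq 2\lambda$. Hence the 1D $\lambda$-enlargements of these slices, which coincide with the slices of $(E_k)_{\lambda,\nu}$ and $(E_{k'})_{\lambda,\nu}$ along $\ell$, meet at most at a single endpoint and so have $\mathcal{L}^1$-null intersection. Fubini in direction $\nu$ then gives
\[
\mathscr{L}^2\bigl((E_k)_{\lambda,\nu}\cap (E_{k'})_{\lambda,\nu}\bigr) = 0 \quad\text{for every }\nu\in\Ss,
\]
and combined with the identity $\bigl(\bigcup_k E_k\bigr)_{\lambda,\nu} = \bigcup_k (E_k)_{\lambda,\nu}$ this yields $\mathscr{L}^2$-additivity pointwise in $\nu$; integrating over $\Ss$ concludes. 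The substantive point, which Fubini resolves elegantly, is that in $\R^2$ the enlargements $(E_k)_{\lambda,\nu}$ and $(E_{k'})_{\lambda,\nu}$ need not themselves be disjoint for every $\nu$, only their 1D slices are, and that is enough to annihilate the 2D overlap.
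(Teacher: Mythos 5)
Your proof of item (i) is essentially the paper's: slice $A_{\lambda,\nu}$ along lines parallel to $\nu$, note that each nonempty slice has length at least $2\lambda$, integrate, and relabel $\nu^\perp\leftrightarrow\nu$. Where you genuinely diverge is the measurability step: the paper simply cites Federer [2.10.5], whereas your self-contained argument (reduce to compact $A$ by exhaustion and continuity from below of $\Hh^1$; observe that $\{(\nu,t):t\in\pi_\nu(A)\}$ is compact as the continuous image of $\Ss\times A$ under $(\nu,x)\mapsto(\nu,x\cdot\nu)$; apply Tonelli) is elementary and correct, and your remark that the reduction is needed because $\pi_\nu(A)$ is only $F_\sigma$ for unbounded closed $A$ is precisely the right point.

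The paper gives no argument at all for item (ii), calling it ``quite clear'', so here you are filling a gap, and the Fubini strategy is the right one. However, the intermediate claim that the $1$D $\lambda$-enlargements of $E_k\cap\ell$ and $E_{k'}\cap\ell$ ``meet at most at a single endpoint'' is false: with $E_k\cap\ell=\{0\}$ and $E_{k'}\cap\ell=\{-2\lambda,2\lambda\}$ (which respect the $2\lambda$-separation) the intersection of the $1$D enlargements is $\{-\lambda,\lambda\}$, and in general it can even be countably infinite. What is true, and is all you need, is that the intersection is always countable. Indeed, if $x$ lies in both enlargements then, writing $x=a+s=a'+s'$ with $a\in E_k\cap\ell$, $a'\in E_{k'}\cap\ell$, $|s|,|s'|\leq\lambda$, the bound $|a-a'|=|s-s'|\geq 2\lambda$ forces $s=-s'=\pm\lambda$, so either $x-\lambda\in E_k\cap\ell$ and $x+\lambda\in E_{k'}\cap\ell$, or the symmetric option. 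If $x<x''$ both satisfy the first option, then $x''-\lambda\in E_k\cap\ell$ and $x+\lambda\in E_{k'}\cap\ell$ give $|x''-x-2\lambda|\geq 2\lambda$, hence $x''\geq x+4\lambda$; so each option describes a $4\lambda$-separated, hence countable, set. This keeps the intersection $\mathcal{L}^1$-null, and with this small repair the rest of your Fubini argument for (ii) goes through.
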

\begin{proof} Assertions $ii)$ is quite clear directly from definitions, so only $i)$ needs a proof. We first remark that given a direction $\nu \in \mathbb{S}^1$ and $y\in \R$, then
$$\mathcal{H}^1\Big(A_{\lambda,\nu}\cap \pi_{\nu^\bot}^{-1}(y)\Big) \geq 
\left\{
\begin{array}{cl}
2\lambda  &\text{ if } A\cap \pi_{\nu^\bot}^{-1}(y) \not = \emptyset \\
0 & \text{ otherwise }
\end{array}
\right.
$$
Next, using Fubini's Theorem we can estimate
$$ \mathscr{L}^2(A_{\lambda,\nu}) = \int_{\R}\mathcal{H}^1\Big(A_{\lambda,\nu}\cap \pi_{\nu^\bot}^{-1}(y)\Big) dy\geq  2\lambda \int_{\R}\mathbf{1}_{ \pi_{\nu^\bot}(A)}(y)dy= 2\lambda \mathcal{H}^1(\pi_{\nu^{\bot}(A)}).$$
The measurability of $\nu \mapsto \mathcal{H}^1(\pi_{\nu^{\bot}(A)})$ is a delicate issue and follows from  \cite[2.10.5]{federer}. We finish the proof integrating over $\nu^\bot \in \mathbb{S}^1$.
\end{proof}

Our goal is now to prove the following.

\begin{lem} \label{Lemme2} Let $(A_n)_{n\in \N}$ be a sequence of  compact connected subsets of $\R^2$ converging for the Hausdorff distance to some compact and connected set $A$. Then for any $0\leq L<  \Hh^{1}(A) $  and for every $\lambda>0$ small enough (depending on $A$), one can find $n_0\in \N$ such that
$$I_\lambda(A_n)\geq C L, \quad \forall n \geq n_0$$
where $C>0$ is universal.
\end{lem}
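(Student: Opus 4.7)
The plan is to extract from each $A_n$ a collection of small connected pieces and to apply Lemma~\ref{Lemmaproj} on their union. The three tools are: the covering Lemma~\ref{covering1} applied to $A$, the Hausdorff convergence to transfer the centers of a covering to $A_n$, and the topological Lemma~\ref{connexe} to extract, in each small ball around such a center, a connected subset of $A_n$ that reaches the boundary of the ball.

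Fix $L<\Hh^1(A)$. By Lemma~\ref{covering1} applied to $A$ there exists $r_0>0$ such that for every $r<r_0$ one finds $N$ points $x_1,\dots,x_N\in A$ with pairwise disjoint balls $B(x_i,r/2)$ (hence $|x_i-x_j|\geq r$ for $i\neq j$) and $2Nr>L$. Choose such an $r$, also satisfying $r<\diam(A)$, and take any $\lambda\in(0,r/16)$. Since $d_H(A_n,A)\to 0$, for every $n$ sufficiently large one can select $y_i^n\in A_n$ with $|y_i^n-x_i|<\lambda$, and moreover $\diam(A_n)>r$. Then Lemma~\ref{connexe} applied in $A_n$ at the center $y_i^n$ with radius $r/4$ yields a connected set $K_i^n\subset A_n\cap\overline{B}(y_i^n,r/4)$ containing both $y_i^n$ and some point $z_i^n\in\partial B(y_i^n,r/4)$.

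Since $K_i^n$ is connected, for every $\nu\in\Ss$ the projection $\pi_\nu(K_i^n)$ is an interval containing $\pi_\nu(y_i^n)$ and $\pi_\nu(z_i^n)$, so $\Hh^1(\pi_\nu(K_i^n))\geq|\langle y_i^n-z_i^n,\nu\rangle|$. Using the elementary identity $\int_{\Ss}|\langle v,\nu\rangle|\,d\Hh^1(\nu)=4|v|$ we obtain
$$\int_{\Ss}\Hh^1(\pi_\nu(K_i^n))\,d\Hh^1(\nu)\geq 4\cdot\frac{r}{4}=r.$$
Moreover the pieces $K_i^n\subset\overline{B}(y_i^n,r/4)$ satisfy $\mathrm{dist}(K_i^n,K_j^n)\geq|y_i^n-y_j^n|-r/2\geq r-2\lambda-r/2=r/2-2\lambda>2\lambda$, so the additivity in Lemma~\ref{Lemmaproj}(ii) applies to their union. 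Combining this with the projection bound in Lemma~\ref{Lemmaproj}(i) and the evident monotonicity of $I_\lambda$ we get
$$I_\lambda(A_n)\geq I_\lambda\Big(\bigcup_{i=1}^N K_i^n\Big)=\sum_{i=1}^N I_\lambda(K_i^n)\geq\frac{Nr}{\pi}>\frac{L}{2\pi},$$
which is the claim with the universal constant $C=1/(2\pi)$.

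The main delicate point is not any single step but the joint calibration of the three parameters $r$, $\lambda$, $n$: $r$ must be small enough for Lemma~\ref{covering1}, which is where the dependence on $A$ appears; $\lambda$ must be much smaller than $r$ so that the enlarged pieces remain $2\lambda$-separated; and $n_0$ is then taken large enough that $d_H(A_n,A)<\lambda$ and $\diam(A_n)>r$. Conceptually, the replacement of a tangent-line argument (which would not be uniform along the sequence $A_n$) is the robust fact that any connected planar set containing two points at distance $d$ has average projection at least $2d/\pi$, a property that is manifestly stable under Hausdorff convergence.
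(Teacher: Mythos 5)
Your proof is correct and follows essentially the same route as the paper's: cover $A$ via Lemma~\ref{covering1}, transfer the centers to $A_n$ by Hausdorff proximity, extract connected pieces reaching a definite radius via Lemma~\ref{connexe}, and conclude with the projection bound and additivity from Lemma~\ref{Lemmaproj}. Your calibration $\lambda<r/16$ is in fact a bit more careful than the paper's choice $\lambda=r/4$ with $d_H(A,A_n)\leq r\cdot10^{-5}$, which makes the separation $\mathrm{dist}(K_{i,n},K_{j,n})\geq 2\lambda$ only hold up to the tiny $10^{-5}$ slack; your margin removes any doubt there.
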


\begin{proof} We can assume that $\diam(A)>0$ otherwise $L=0$ and there is nothing to prove. We start by applying Lemma \ref{covering1}, supposing $4\lambda<\min\{r_0,\diam(A)\}$. For $r=4\lambda$, we get the existence of some points $x_i \in A$ for $1\leq i\leq N$ such that the balls $B(x_i,r)$ satisfy the properties $i)$, $ii)$ and $iii)$ of  Lemma \ref{covering1}.

Find $n_0 \in \N$ large enough in such a way that
\begin{eqnarray}
d_H(A,A_n)\leq r.10^{-5} \quad \forall n\geq n_0 \label{Hausddist}
\end{eqnarray}
(here is where $n_0$ can depend on $\lambda$).
%
Given $n\geq n_0$, let $z_{i,n} \in A_n$ be a point such that $d(z_{i,n},x_i)\leq r.10^{-5}$ (this point exists due to \eqref{Hausddist}). Since $r<\diam(A)$ there exists a point in $A\setminus \overline{B}(x_i,r)$, and applying \eqref{Hausddist} again there exists a point of $A_n$ close to this one so that $A_n\setminus \overline{B}(x_i, r(1-10^{-5}))$ is not empty, containing at least a point $z'_{i,n}$. This proves that $\diam(A_n)\geq d(z_{i,n},z'_{i,n})> \frac{1}{2}r$. But then we can apply Lemma \ref{connexe} to the connected set $A_n$ in $B(z_{i,n},\frac{1}{4}r)$, and we get
\begin{eqnarray}
K_{i,n}\cap \partial B\left(z_{i,n},\frac{r}{4}\right) \not = \emptyset, \notag
\end{eqnarray}
where $K_{i,n}\subset A_n$ is the connected component of $A_n\cap \overline{B(z_{i,n},r/4)}$ that contains $z_{i,n}$. Let $y_{i,n} \in K_{i,n}\cap \partial B\left(z_{i,n},\frac{r}{4}\right) $.

We now claim that with our choice of $\lambda$ and $n_0$ it holds
\begin{eqnarray}
I_\lambda(A_n) \geq
\sum_{i=1}^N I_\lambda(K_{i,n})\geq C L, \quad \forall n\geq n_0, \label{BIGCLAIM}
\end{eqnarray}
where $C>0$ is universal.

To justify the first inequality, notice that the sets $(K_{i,n})_{\lambda,\nu}$ are all disjoint since $\lambda=r/4$ and $K_{i,n}\subset B_i=B(x_i,r/4)$, which provides $((K_{i,n})\cap B_i)_{\lambda,\nu}\subset B(x_i,r/2)$, and these balls are disjoint. 

To estimate the integral with $K_{i,n}$, we use the first assertion in Lemma \ref{Lemmaproj}, after noticing that, for each projection $\pi_\nu$, if $E$ is connected and $\{a,b\}\subset E$, then $\pi_\nu(E)\supset\pi_\nu([a,b])$. This means that we have 
$$I_\lambda(E)\geq I_\lambda([a,b])=\frac{2}{\pi}|a-b|.$$
In our particular case we can take $E=K_{i,n}$, $a=z_{i,n}$ and $b=y_{i,n}$ and get $I_\lambda(K_{i,n})\geq I_\lambda([z_{i,n}-y_{i,n}])\geq Cr$.

Hence, summing up and using  Lemma \ref{Lemmaproj} we get
$$I_\lambda(A_n)\geq \sum_{i=1}^N I_\lambda((K_{i,n})\cap B_i)\geq C  \sum_{i=1}^N r,\geq C L,$$
as desired. This finishes the proof of the Lemma.
\end{proof}

\subsection{An elementary inequality on the  total variation}
When $I=(a,b)\subset \R$ is an interval, we denote by $Var(f,I)$ the total variation of the one-variable function $f:I\to \R$ defined by
$$Var(f,I):=\sup \{\sum_i |f(t_i)-f(t_{i+1})| ; \quad a=t_0<t_1<\dots <t_i<\dots <t_N=b \}.$$
If $J \subset \R$ is open, we define $Var(f,J)=\sum_I Var(f,I)$, where the sum is taken over all the connected components of $J$.
\begin{lem}\label{variation} Let $J\subset \R$ be an open set and $f: J \to \R$ a Borel measurable function. Take a finite number of intervals $I^\pm_i$,  $1\leq i\leq N$, satisfying the following properties:
\begin{enumerate}[(i)]
\item $\bar I^-_i=[a_i^-,c_i]$ and $\bar I^+_i=[c_i,a_i^+]$ with $a_i^-< c_i< a_i^+$
\item $c_i < c_{i+1}$ for all $1\leq i \leq N$
\item $J=\bigcup_{1\leq i\leq N} I^-_i \cup I^+_i.$
\end{enumerate}
Then denoting by $m_i^\pm$ the average of $f$ on $I_i^\pm$ we have that
$$Var(f,J)\geq \sum_{1\leq i\leq N} m_i^+ + m_i^- -2 f(c_i).$$
\end{lem}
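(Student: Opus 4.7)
My approach is to prove the inequality in two stages: first obtain, for each $i$ individually, an estimate of $m_i^++m_i^--2f(c_i)$ in terms of the total variation of $f$ on the pair $\bar I_i^-\cup \bar I_i^+$, and then sum over $i$ using the additivity of the variation across shared endpoints. If $Var(f,J)=+\infty$ there is nothing to prove, so I may assume $f$ is $BV$, in which case its pointwise values at every $c_i$ are unambiguous.

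The key termwise bound rests on the elementary observation that for any $t\in I_i^-$, applying the definition of the total variation to any partition of $[\min(t,c_i),\max(t,c_i)]\subset \bar I_i^-$ containing both $t$ and $c_i$ immediately yields $f(t)-f(c_i)\le |f(t)-f(c_i)|\le Var(f,I_i^-)$. Integrating in $t$ over $I_i^-$ and dividing by $|I_i^-|$ produces the averaged estimate
$$m_i^- - f(c_i) \;=\; \frac{1}{|I_i^-|}\int_{I_i^-}\bigl(f(t)-f(c_i)\bigr)\,dt \;\le\; Var(f,I_i^-),$$
and the symmetric reasoning on $I_i^+$ gives $m_i^+-f(c_i)\le Var(f,I_i^+)$. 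Since $\bar I_i^-$ and $\bar I_i^+$ meet only at the single point $c_i$, the standard additivity of the total variation at a point yields $Var(f,I_i^-)+Var(f,I_i^+)=Var(f,\bar I_i^-\cup \bar I_i^+)$, and summing the two averaged inequalities I obtain
$$Var(f,\bar I_i^-\cup \bar I_i^+) \;\ge\; m_i^++m_i^--2f(c_i).$$

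The last step is to sum this inequality over $i$. Hypothesis (ii) strictly orders the points $c_i$, and together with (i) and (iii) the natural reading is that the closed intervals $[a_i^-,a_i^+]$ cover $J$ and meet only at their endpoints (i.e.\ they tile $J$ up to a measure-zero set), so that the sub-additivity of the variation yields
$$Var(f,J) \;\ge\; \sum_{i=1}^N Var(f,\bar I_i^-\cup \bar I_i^+) \;\ge\; \sum_{i=1}^N \bigl(m_i^++m_i^--2f(c_i)\bigr),$$
which is the desired conclusion. The only delicate point worth double-checking is precisely this non-overlap property of the pairs $[a_i^-,a_i^+]$: if they were allowed to overlap non-trivially, the sum of their variations could exceed $Var(f,J)$ and the argument would break down. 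Thus the main obstacle, and the one I would be most careful about, is establishing the essential disjointness of the pairs inside $J$; once this is granted—as it is under the ordering in (ii) combined with the exact covering condition (iii)—the lemma follows from the elementary termwise calculation above.
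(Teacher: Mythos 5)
Your argument is correct and reaches the conclusion by a genuinely different route. You bound $m_i^\pm - f(c_i)\le Var(f,I_i^\pm)$ directly by averaging the pointwise estimate $|f(t)-f(c_i)|\le Var(f,I_i^\pm)$ (valid since $c_i$ is an endpoint of the partitions defining $Var(f,I_i^\pm)$), and then sum the $2N$ bounds using additivity of the variation at $c_i$ and at the junctions between consecutive blocks. The paper, by contrast, selects for each $i$ a witness point $t_i^\pm$ in the interior of $I_i^\pm$ at which $f$ lies on the far side of $m_i^\pm$ from $f(c_i)$, giving $|m_i^\pm-f(c_i)|\le|f(t_i^\pm)-f(c_i)|$, and then assembles all these points together with the $c_i$ into one ordered subdivision $a_1^-<t_1^-<c_1<t_1^+<\dots<a_N^+$ of $J$, so that the sum of increments is directly a competitor in the supremum defining $Var(f,J)$. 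Your version is more modular and avoids choosing witness points; the paper's version stays closer to the raw definition of $Var$ and does not invoke additivity as a separate fact. One caveat: you rightly single out the essential disjointness of the blocks $[a_i^-,a_i^+]$ as the delicate step, but then claim it is guaranteed by (ii)--(iii); strictly speaking these hypotheses give a covering with ordered centres $c_i$, not a non-overlapping one. The paper's own proof makes exactly the same implicit assumption (otherwise the displayed chain of points need not be increasing), and the assumption does hold in the only place Lemma~\ref{variation} is applied, so this is a shared imprecision of the statement rather than a defect of your proof.
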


\begin{proof} This Lemma is very elementary but we provide a detailed proof for the convenience of the reader. Observe first that   for all $1\leq i \leq N$,  $I_i^-\cup I_i^+ $ is an interval and therefore is contained in only one connected component of $J$.  Therefore, arguing  component by component we can assume without loss of generality that  $J$ is connected.

We start by writing
\begin{eqnarray}
\sum_{1\leq i\leq N} m_i^+ + m_i^- -2 f(c_i)\leq \sum_{1\leq i\leq N} |m_i^+ - f(c_i)|+|m_i^--f(c_i)|. \label{inequalityR}
\end{eqnarray}
Next, for each $1\leq i \leq N$ we define $t_i^+ \in I^+_i$ and $t_i^- \in I^-_i$ as follows :
\begin{itemize}
\item if $m_i^+ -f(c_i)\geq 0$ then we select some $t_i^+ \in Int ( I^+_i)$ such that $f(t_i^+)\geq m_i^+$. In this case we have that
$$|m_i^+ -f(c_i)|=m_i^+ -f(c_i)\leq f(t_i^+)-f(c_i)=|f(t_i^+)-f(c_i)|.$$
\item if, at the contrary, $m_i^+ -f(c_i)\leq 0$ then we select some $t_i^+ \in Int ( I^+_i)$ such that $f(t_i^+)\leq m_i^+$. It follows in this case that
$$|m_i^+ -f(c_i)|=f(c_i)-m_i^+ \leq f(c_i)-f(t_i^+)=|f(t_i^+)-f(c_i)|.$$
We proceed the same way for $m_i^-$, defining for all $1\leq i \leq N$ a point  $t_i^- \in Int ( I^-_i)$ satisfying
$$|m_i^- -f(c_i)|\leq |f(t_i^-)-f(c_i)|.$$
By this way we have obtained a subdivision of $J$ of the form
$$a_1^- < t_1^- < c_1 < t_1^+ < \dots < t_i^- < c_i < t_i^+ < \dots a_N^+.$$
Consequently
$$\sum_{1 \leq i\leq N}|f(c_i)-f(t_i^-)|+|f(t_i^+)-f(c_i)|+|f(t_{i+1}^-)-f(t_i^+)|\leq Var(f,J)$$
And by construction of $t_i^\pm$ it follows that
\begin{eqnarray}
 \sum_{1\leq i\leq N} |m_i^+ - f(c_i)|+|m_i^--f(c_i)| &\leq& \sum_{1 \leq i\leq N}|f(c_i)-f(t_i^-)|+|f(t_i^+)-f(c_i)| \notag \\
 &\leq & Var(f,J), \notag
 \end{eqnarray}
and the proof is concluded by recalling \eqref{inequalityR}.\qedhere
\end{itemize}
\end{proof}
\subsection{A useful recovery sequence}

Here we recall a standard construction for the $\Gamma$-limsup inequality, that is obtained by studying the optimal profile in one dimension for minimizers of an energy of Modica-Mortola type.

\begin{lem}\label{limsup} Let $\Omega \subset \R^2$ be open and $K\subset \Omega$ be a compact and connected set. For any $r>0$ we will set
$K_r:=\{x\in\R^2 ; dist(x,K)\leq r\}.$
 Let $k_\varepsilon\to 0$ be given. Then for all $\varepsilon>0$ there exists a function $\varphi_\varepsilon \in W^{1,2}_{loc}(\R^2)\cap C^0(\R^2)$ equal to $k_\varepsilon$ on $K_{\varepsilon^2}$, equal to $1$ on $\R^2 \setminus K_{\varepsilon^2+2\varepsilon|\ln(\varepsilon)|}$, and such that 
$$\limsup_{\varepsilon \to 0} \left( \int_{\R^2}\varepsilon |\nabla \varphi_\varepsilon|^2 + \frac{(1-\varphi_\varepsilon)^2}{4\varepsilon} \right) \leq \Hh^{1}(K)$$
(the integral being indeed performed on a neighborhood of $\Omega$, since $\varphi_\ve$ is the constant $1$ outside).
\end{lem}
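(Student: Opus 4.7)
The plan is to realize $\varphi_\ve$ as a one-dimensional profile composed with the Euclidean distance to $K$, a standard Modica--Mortola recovery construction adapted so that the Dirichlet values $k_\ve$ on $K_{\ve^2}$ and $1$ outside $K_{\ve^2+2\ve|\ln\ve|}$ are attained exactly. Concretely, I set $\varphi_\ve(x) := g_\ve(\mathrm{dist}(x,K))$ for a Lipschitz $g_\ve \colon [0,\infty) \to [k_\ve, 1]$ described below; since $\mathrm{dist}(\cdot,K)$ is $1$-Lipschitz with $|\nabla \mathrm{dist}(\cdot,K)|=1$ a.e., the composition is in $W^{1,2}_{loc}(\R^2) \cap C^0(\R^2)$, and the two boundary constraints are built into the choice of $g_\ve$.

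For the profile I would glue three pieces: $g_\ve \equiv k_\ve$ on $[0,\ve^2]$; the quasi-optimal exponential
\[
g_\ve(t) = 1 - (1-k_\ve)\, e^{-(t-\ve^2)/(2\ve)}
\]
on $[\ve^2, T_\ve]$, where $T_\ve = \ve^2 + 2\ve|\ln\ve| - c\ve$ is chosen so that $g_\ve(T_\ve) = 1 - O(\ve)$; a short affine piece on $[T_\ve, \ve^2+2\ve|\ln\ve|]$ raising $g_\ve$ to the value $1$; and $g_\ve \equiv 1$ thereafter. The exponential realises the saturation case of the pointwise Young inequality $\ve (g')^2 + (1-g)^2/(4\ve) \ge (1-g)g'$ (i.e.\ the ODE $2\ve g' = 1-g$), and a direct computation gives $\int_{\ve^2}^{T_\ve}[\ve g_\ve'(t)^2 + (1-g_\ve(t))^2/(4\ve)]\, dt = (1-k_\ve)^2/2 + o(1) \to 1/2$. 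The affine top-patch contributes $O(\ve)$ to the 1-D energy (since $1-g_\ve$ is $O(\ve)$ and its length is $O(\ve)$), and the plateau $[0,\ve^2]$ contributes $(1-k_\ve)^2 \mathscr{L}^2(K_{\ve^2})/(4\ve) = O(\ve \Hh^1(K))$, both negligible.

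To conclude, I use the coarea formula for $\mathrm{dist}(\cdot,K)$ to write
\[
\int_{\R^2}\! \ve|\nabla\varphi_\ve|^2 + \frac{(1-\varphi_\ve)^2}{4\ve}\, dx = \int_0^\infty \Big[\ve g_\ve'(t)^2 + \frac{(1-g_\ve(t))^2}{4\ve}\Big]\, \theta_K(t)\, dt,
\]
where $\theta_K(t) := \tfrac{d}{dt}\mathscr{L}^2(K_t)$. Since $K$ is compact connected with $\Hh^1(K) < \infty$, Proposition \ref{rectifiability} gives that $K$ is $1$-rectifiable, whence the classical identification of $1$-Minkowski content with Hausdorff measure applies: $\mathscr{L}^2(K_t)/(2t) \to \Hh^1(K)$ as $t \to 0^+$. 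Integrating by parts in the formula above transfers the $t$-derivative onto the integrand and replaces $\theta_K(t)\, dt$ by $\mathscr{L}^2(K_t) = 2t\,\Hh^1(K)(1+o(1))$, which is robust in spite of the non-monotonicity of $\theta_K$. Because the support of the $t$-integrand is the vanishing window $[\ve^2, \ve^2+2\ve|\ln\ve|]$, this Minkowski asymptotic applies uniformly, and the expression reduces to $2\Hh^1(K) \cdot (1/2 + o(1)) = \Hh^1(K) + o(1)$.

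The main obstacle is the passage from the pointwise Minkowski limit at $t = 0^+$ to a uniform control over the transition window of width $2\ve|\ln\ve|$; rectifiability of $K$ is essential here and is precisely what Proposition \ref{rectifiability} provides. A lesser but genuine technical point is the construction of the cap joining the exponential up to the value $1$: one must design it without making its contribution exceed $o(1)$, and verify that the resulting $g_\ve$ is globally Lipschitz so that $\varphi_\ve \in W^{1,2}_{loc}$ with the prescribed boundary values.
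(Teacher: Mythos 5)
Your proposal is correct and follows the paper's argument in all essentials --- an exponential profile composed with $\mathrm{dist}(\cdot,K)$, the coarea formula, integration by parts to transfer the derivative off $\mathcal{H}^1(\{\mathrm{dist}=t\})$, and the identification of Minkowski content with $\Hh^1(K)$ on the vanishing transition window --- differing only cosmetically in that you attach a short affine cap to reach the value $1$ exactly, whereas the paper rescales the exponential by $\lambda_\ve=(1-k_\ve)/(1-e^{-a_\ve/(2\ve)})$ so it already equals $1$ at the outer radius. One imprecision worth correcting: the Minkowski--Hausdorff identification $\mathscr{L}^2(K_t)/(2t)\to\Hh^1(K)$ does not follow from rectifiability alone; one also needs the lower Ahlfors bound $\Hh^1(K\cap B(x,r))\ge r$, which holds automatically for compact connected sets and is precisely the hypothesis the paper invokes via \cite[Theorem 2.104]{afp}.
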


\begin{proof} Due to the fact that $K$ is connected, it automatically satisfies the lower Ahlfors-regularity inequality
\begin{eqnarray}
\inf_{x\in K, r<r_0} \frac{\Hh^1(K\cap B(x,r))}{2r} > 0.
\end{eqnarray}
But this is enough to guarantee that the Minkowsky content and the Hausdorff measure coincide (see for e.g. \cite[Theorem 2.104, page 110]{afp}), namely,

\begin{eqnarray}\label{assumpt0}
\lim_{r \to 0} \frac{\mathscr{L}^{2}(
K_r)}{2r}=\Hh^{1}(K).
\end{eqnarray}
Then, for some suitable infinitesimals $a_\varepsilon$,
$b_\varepsilon$  that will be chosen just after, we take a function   $\varphi_\varepsilon$ similar to the one considered  in the proof of Theorem 3.1. in \cite{at}, namely
$$
\varphi_\varepsilon =
\left\{
\begin{array}{cl}
k_\varepsilon & \text{ on } K_{b_\varepsilon}\\
1 &\text{ on } \Omega \setminus K_{b_\varepsilon+a_\varepsilon} \\
k_\varepsilon+\lambda_\ve\left[1-\exp\left(\frac{b_\varepsilon-dist(x,K)}{2\varepsilon}\right) \right]&
\text{ on } K_{b_\varepsilon+a_\varepsilon}\setminus
K_{b_\varepsilon}
\end{array}
\right.
$$
where 
$$\lambda_\varepsilon:=\frac{1-k_\varepsilon}{1-\exp(-a_\ve/(2\ve))},$$
which insures that $\varphi_\varepsilon$ is a continuous function.  We will impose $a_\ve/\ve\to+\infty$, so that $\lambda_\ve\to 1$. Precisely, similarly to what was performed in \cite{at} page 116-117, we can show that,  by
choosing $b_\varepsilon = \varepsilon^2$ and $a_\varepsilon=-2\varepsilon \ln(\varepsilon)$,  it holds 
$$\limsup_{\varepsilon \to 0} \left( \int_{U}\varepsilon |\nabla \varphi_\varepsilon|^2 + \frac{(1-\varphi_\varepsilon)^2}{4\varepsilon} \right) \leq \Hh^{1}(K).$$
For the convenience of the reader, let us write here the proof.

Notice that the choice of $a_\ve$ implies $\lambda_\ve=(1-k_\ve)/(1-\ve)$.

The main contribution in the limsup is attained in the region $K_{a_\varepsilon+b_\varepsilon}\setminus K_{b_\varepsilon}$, since it is easily seen that all the rest goes to zero. Denoting  $\tau(x):=dist(x,K)$, we observe that in this region  the function $\varphi_\varepsilon$ is of the form $\lambda_\ve f(\tau(x))+k_\varepsilon$,  where $f(t)=[1-\exp\left(\frac{b_\varepsilon-t}{2\varepsilon}\right)]$ is solving  the equation
$$f'=\frac{1-f}{2\varepsilon}\quad, \quad f(b_\varepsilon)=0.$$
Also notice that for $t\in[b_\ve,b_\ve+a_\ve]$ we have $f(t)\leq 1-\ve$. We also have (we will use this computation in the next inequalities)
$$1-k_\ve-\lambda_\ve f(t)=(1-k_\ve)\left(1-\frac{f(t)}{1-\ve}\right)=\frac{1-k_\ve}{1-\ve}(1-\ve-f(t)).$$ 
Also, from $f(t)\leq 1-\ve$, we infer $(1-k_\ve-\lambda_\ve f(t))^2\leq \frac{(1-k_\ve)^2}{(1-\ve)^2}(1-\ve-f(t))^2$.

 The coarea formula yields 
\begin{eqnarray}
A_\varepsilon&:=&\int_{K_{a_\varepsilon+b_\varepsilon}\setminus K_{b_\varepsilon}}\varepsilon |\nabla \varphi_\varepsilon|^2 + \frac{(1-\varphi_\varepsilon)^2}{4\varepsilon} dx\notag \\
&=&\int_{b_\varepsilon}^{b_\varepsilon+a_\varepsilon} \left( \varepsilon \lambda_\ve^2 |f'(t)|^2 + \frac{(1-k_\varepsilon-\lambda_\ve f(t))^2}{4\varepsilon} \right) \mathcal{H}^1\Big(\{ x ; dist(x,K)=t\Big) dt \notag \\ 
 \notag 
 &\leq&\int_{b_\varepsilon}^{b_\varepsilon+a_\varepsilon} \left(\lambda_\ve^2 \frac{(1-f(t))^2}{4\varepsilon} +  \frac{(1-k_\ve)^2}{(1-\ve)^2}\frac{(1-f(t))^2}{4\varepsilon} \right) \mathcal{H}^1\Big(\{ x ; dist(x,K)=t\Big) dt \notag \\ 
 \notag 
 &\leq&\max\left\{\lambda_\ve^2,  \frac{(1-k_\ve)^2}{(1-\ve)^2}\right\}\frac{1}{2\varepsilon}\int_{b_\varepsilon}^{b_\varepsilon+a_\varepsilon}\exp\left(\frac{b_\varepsilon-t}{\varepsilon}\right)\mathcal{H}^1\Big(\{ x ; dist(x,K)=t\Big) dt .
\end{eqnarray}
setting
$$\sigma_\ve:=\max\left\{\lambda_\ve^2,  \frac{(1-k_\ve)^2}{(1-\ve)^2}\right\}\to 1$$
we only need to estimate the integral in the last expression.

Denoting $g(t)=\mathscr{L}^{2}(
K_t)$, we have $g'(t)=\mathcal{H}^1\Big(\{ x ; dist(x,K)=t\Big)$ a.e. thus after integrating by parts we arrive to
\begin{eqnarray}
A_\varepsilon &\leq&\frac{\sigma_\varepsilon\varepsilon}{2}g(a_\varepsilon+b_\varepsilon)-\frac{\sigma_\varepsilon}{2\varepsilon}g(b_\varepsilon) + \frac{\sigma_\varepsilon}{2\varepsilon^2}\int_{b_\varepsilon}^{b_\varepsilon+a_\varepsilon}\exp\left(\frac{b_\varepsilon-t}{\varepsilon}\right)g(t) dt . \notag 
\end{eqnarray}
Now since the first two terms are going to zero, and since $\sigma_\varepsilon\to 1$, we get
\begin{eqnarray}\label{above vareps}
\limsup_{\varepsilon \to 0} A_\varepsilon \leq \limsup_{\varepsilon \to 0} \frac{1}{2\varepsilon^2}\int_{b_\varepsilon}^{b_\varepsilon+a_\varepsilon}\exp\left(\frac{b_\varepsilon-t}{\varepsilon}\right)g(t) dt .
\end{eqnarray}
If we compute, with the change of variable $t=b+\ve s$
$$\frac{1}{2\ve^2}\;\int_b^{a+b}\exp\left(\frac{b-t}{\ve}\right)2t dt =\int_0^{a/\ve}\exp(-s)\left(\frac b \ve +s\right)ds$$
and we use $a=a_\ve=-2\ve\ln(\ve)$ and $b=b_\ve=\ve^2$ we get $a_\ve/\ve\to +\infty$ and $b_\ve/\ve\to 0$, which gives
\begin{equation}\label{chg var}
\lim_{\ve\to 0}\;\frac{1}{2\ve^2}\int_b^{a_\ve+b_\ve}\exp\left(\frac{b_\ve-t}{\ve}\right)2t dt=\int_0^{\infty}\exp(-s)sds=1.
\end{equation}

Then, we can deduce from the computation above and from \eqref{assumpt0}  that the right-hand side of \eqref{above vareps} does not exceed $\mathcal{H}^1(K)$.  Indeed, for any given $\tau>0$ we know from \eqref{assumpt0}  that $g(t)/2t$ is less than  $\mathcal{H}^1(K)+\tau$, provided that $\varepsilon$ is small enough. This allows us to write, for small $\varepsilon$,
$$
 \frac{1}{2\varepsilon^2}\int_{b_\varepsilon}^{b_\varepsilon+a_\varepsilon}\exp\left(\frac{b_\varepsilon-t}{\varepsilon}\right)g(t) dt \leq  \frac{(\mathcal{H}^1(K)+\tau)}{2\varepsilon^2}\int_{b_\varepsilon}^{b_\varepsilon+a_\varepsilon}\exp\left(\frac{b_\varepsilon-t}{\varepsilon}\right) \,2t\, dt\notag
$$
and we conclude by taking the limsup in $\varepsilon$, using \eqref{chg var}, and then letting $\tau \to 0$.

\end{proof}

\begin{remark}\label{estimate ve-p}
Notice that from the explicit formulas for $\varphi_\ve$, one can also estimate higher summability for $|\nabla\varphi_\ve|$, with bounds depending on $\ve$. Without any intention to be sharp, it is easy to check $ |\nabla\varphi_\ve|\leq C/\ve$, and hence $\int |\nabla\varphi_\ve|^p\leq C\ve^{-p}$.
\end{remark}


 The functions $\varphi_\ve$ that we have just built have the property that they are equal to $\varepsilon$ on  $K$ and are equal to $1$ outside a neighborhood of $K$. In  the sequel we will need to ``move'' a little bit  the set $K$ inside $\Omega$ to take care of boundary effects.

In particular we will consider the following situation.

\begin{lem}\label{limsupmodifie}Let  $\Omega$ be an open  and bounded subset of $\R^2$, star-shaped with respect to $0\in \Omega$ and with Lipschitz boundary.   Then there exists some constants $c(\Omega)$ and $C(\Omega)$  depending only on $\Omega$ and expliciten later, such that the following holds. Take $a_\varepsilon$, $b_\varepsilon$ and $\varphi_\varepsilon$ just as before for some closed connected set $K \subset \overline\Omega$,   and with $\varepsilon$ satisfying $a_\varepsilon+b_\varepsilon\leq c(\Omega)$. Let $\delta\in]0,1[$ be such that
\begin{eqnarray}
\delta \geq  C(\Omega)(a_\varepsilon+b_\varepsilon).\label{choiceofdelta}
\end{eqnarray}
Then 
\begin{eqnarray}
\frac{1}{1+\delta}K_{a_\varepsilon+b_\varepsilon}\subset \Omega \label{cool0}
\end{eqnarray}
and the function $\varphi_{\ve,\delta}$ defined by
$$\varphi_{\ve,\delta}(x)=\begin{cases}\varphi_\ve((1+\delta)x)&\mbox{if }x\in(1+\delta)^{-1}\Omega,\\
								1&\mbox{if }x\in \Omega\setminus (1+\delta)^{-1}\Omega,\end{cases}$$
satisfy
$$\left( \int_{\Omega}\varepsilon |\nabla \varphi_{\varepsilon,\delta}|^2 + \frac{(1-\varphi_{\varepsilon,\delta})^2}{4\varepsilon} \right) \leq \left( \int_{\Omega}\varepsilon |\nabla \varphi_\varepsilon|^2 + \frac{(1-\varphi_\varepsilon)^2}{4\varepsilon} \right).$$
 The constants $c(\Omega)$ and $C(\Omega)$ can be chosen as follows
$$c(\Omega)=\frac{1}{4}\min\{|x| ;\; x\in \partial \Omega\}, \quad \quad C(\Omega)=\frac{1}{4c(\Omega)}\left(1+\frac{8{\rm Lip}(g_\Omega)}{c(\Omega)}\right),$$
where $g_\Omega$ is the Lipschitz parametrisation of the boundary defined through the identity
 $$\Omega=\{x ; |x|\leq g_\Omega(x/|x|)\}.$$

\end{lem}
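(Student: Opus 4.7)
The proof splits into two largely independent parts: the geometric inclusion \eqref{cool0}, which is the heart of the argument, and the energy bound, which will follow from a change of variables.

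For the inclusion, I would fix $x\in K_{a_\varepsilon+b_\varepsilon}$ and show $x/(1+\delta)\in\Omega$; for $x\neq 0$, writing $x=\rho\nu$ with $\nu=x/|x|\in\Ss$, this amounts to $\rho\leq(1+\delta)g_\Omega(\nu)$. Pick $y\in K\subseteq\overline\Omega$ with $|x-y|\leq a_\varepsilon+b_\varepsilon$, and set $\rho_0:=\min\{|z|:z\in\partial\Omega\}=4c(\Omega)$; by star-shapedness of $\Omega$ with respect to $0$ one has $B(0,\rho_0)\subset\Omega$, and hence $g_\Omega\geq\rho_0$ on $\Ss$. If $\rho<\rho_0$, then $\rho/(1+\delta)<g_\Omega(\nu)$ and we are done. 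Otherwise $\rho\geq 4c(\Omega)$; since $a_\varepsilon+b_\varepsilon\leq c(\Omega)$, I get $|y|\geq 3c(\Omega)>0$, so $y/|y|\in\Ss$ and $|y|\leq g_\Omega(y/|y|)$. Combining the elementary bound $|y/|y|-\nu|\leq 2|y-x|/|y|\leq 2(a_\varepsilon+b_\varepsilon)/(3c(\Omega))$ with the Lipschitzianity of $g_\Omega$ yields
\[
|x|-g_\Omega(\nu)\leq(a_\varepsilon+b_\varepsilon)\left(1+\frac{2\mathrm{Lip}(g_\Omega)}{3c(\Omega)}\right),
\]
and since $g_\Omega(\nu)\geq 4c(\Omega)$ the assumption $\delta\geq C(\Omega)(a_\varepsilon+b_\varepsilon)$ makes $\delta g_\Omega(\nu)$ dominate the right-hand side---the paper's coefficient $\tfrac{8\mathrm{Lip}(g_\Omega)}{c(\Omega)}$ inside $C(\Omega)$ is comfortably larger than the $\tfrac{2\mathrm{Lip}(g_\Omega)}{3c(\Omega)}$ produced by the argument---giving $|x|\leq(1+\delta)g_\Omega(\nu)$.

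For the energy estimate, I would split $\Omega$ into $(1+\delta)^{-1}\Omega$ and the corona $\Omega\setminus(1+\delta)^{-1}\Omega$. On the corona $\varphi_{\varepsilon,\delta}\equiv 1$ and both integrands vanish. On $(1+\delta)^{-1}\Omega$ the chain rule gives $\nabla\varphi_{\varepsilon,\delta}(x)=(1+\delta)(\nabla\varphi_\varepsilon)((1+\delta)x)$, and the planar change of variable $y=(1+\delta)x$ (Jacobian $(1+\delta)^2$) produces
\[
\int_{(1+\delta)^{-1}\Omega}\!\!\varepsilon|\nabla\varphi_{\varepsilon,\delta}|^2\,dx=\int_{\Omega}\varepsilon|\nabla\varphi_\varepsilon|^2\,dy,\qquad \int_{(1+\delta)^{-1}\Omega}\!\!\frac{(1-\varphi_{\varepsilon,\delta})^2}{4\varepsilon}\,dx=\frac{1}{(1+\delta)^2}\int_{\Omega}\frac{(1-\varphi_\varepsilon)^2}{4\varepsilon}\,dy,
\]
the $(1+\delta)^2$ from the squared gradient cancelling the Jacobian exactly, while the potential term even contracts by $(1+\delta)^{-2}<1$. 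Adding the two contributions yields the claimed inequality.

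The main obstacle is the careful bookkeeping of constants in the first part; once the slack in the definition of $C(\Omega)$ is made explicit, the inclusion \eqref{cool0} also legitimates the piecewise formula for $\varphi_{\varepsilon,\delta}$, since the set where the rescaled profile $\varphi_\varepsilon\big((1+\delta)\,\cdot\,\big)$ deviates from $1$, namely $(1+\delta)^{-1}K_{a_\varepsilon+b_\varepsilon}$, is confined inside $\Omega$, and the change of variable in the second part then proceeds unobstructed.
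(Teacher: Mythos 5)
Your argument mirrors the paper's: handle the near-origin case trivially via $B(0,\rho_0)\subset\Omega$, then for far points combine the elementary bound $\bigl|\tfrac{a}{|a|}-\tfrac{b}{|b|}\bigr|\leq 2|a-b|/\min(|a|,|b|)$ with the Lipschitzianity of $g_\Omega$ to dominate $|x|-g_\Omega(\nu)$ by $\delta g_\Omega(\nu)$, and finally use the exact (conformal) cancellation of the Jacobian against the squared gradient together with the $(1+\delta)^{-2}$ contraction of the potential term. The bookkeeping of constants is organized slightly differently (you phrase the inclusion directly for $x\in K_{a_\ve+b_\ve}$ rather than for $x\in\Omega$ shifted by a vector $w$ of length at most $a_\ve+b_\ve$, and your threshold $\rho\geq 4c(\Omega)$ yields the prefactor $1+\tfrac{2\lip g_\Omega}{3c(\Omega)}$ against the paper's intermediate $1+\tfrac{4\lip g_\Omega}{|x|}$), but both reach the same conclusion that the stated $C(\Omega)$ has comfortable slack, so this is essentially the same proof.
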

\begin{proof}   Let $g=g_\Omega$ be the gauge of $\Omega$ and $c(\Omega)$, $C(\Omega)$ the constants given in the statement of the Lemma.  We want to find a condition on $\delta$ such that, for every point $x\in\Omega$ and every vector $w$ with $|w|\leq a_\ve+b_\ve \leq c(\Omega)$, we have $(1+\delta)^{-1}(x+w)\in \Omega$. 
  Notice that $B(0,R)\subset \Omega$ where $R:=\min\{|x| ;\; x\in \partial \Omega\}$, and since  $a_\varepsilon+b_\varepsilon\leq c(\Omega)=R/4$, it follows  that $x+w \in B(0,R)\subset \Omega$ for all $x \in B(0,R/2)$, a fortiori  $(1+\delta)^{-1}(x+w)\in \Omega$  too. Therefore it is enough to check the property for $x \in \Omega \setminus B(0,R/2)$.

From the inequality 
$$\left|\frac{x}{|x|}-\frac{y}{|y|}\right|=\left|\frac{|y|(x-y)+y(|y|-|x|)}{|x||y|}\right|\leq \frac{|x-y|+\big||x|-|y|\big|}{|x|}\leq \frac{2|x-y|}{|x|}$$
 we deduce that 
\begin{eqnarray}
\left|\frac{x}{|x|}-\frac{x+w}{|x+w|}\right| \leq \frac{2|w|}{|x|}.
\end{eqnarray}
It follows that 
\begin{eqnarray}
g\left(\frac{x}{|x|}\right)\leq g\left(\frac{x+w}{|x+w|}\right)+{\rm Lip}(g)\frac{2w}{|x|} .\label{liplip}
\end{eqnarray}

Recall that we want to find a condition on $\delta$ which garantees 
$$|x+w|\leq (1+\delta)g\left(\frac{x+w}{|x+w|}\right).$$
 From $|x+w|\leq |x|+|w|$ and \eqref{liplip}  it is enough to have 
\begin{equation}\label{lipg}
|x|+|w|\leq g\left(\frac{x}{|x|}\right)+\delta g\left(\frac{x}{|x|}\right)-(1+\delta)\lip(g)\frac{2|w|}{|x|}.\end{equation}
Using $|x| \leq g(x/|x|)$ and $\delta\leq 1$, \eqref{lipg} is guaranteed if 

$$|w|\leq \delta g\left(\frac{x}{|x|}\right)-4\lip(g)\frac{|w|}{|x|},$$
in other words when
$$\delta\geq \frac{|w|}{g(x/|x|)}\left(1+\frac{4\lip(g)}{|x|}\right),$$
and since $|w|\leq a_\varepsilon+b_\varepsilon$, $|x|\geq R/2=2c(\Omega)$ and $g(x/|x|)\geq 4c(\Omega)$ this holds true when
$$\delta\geq (a_\varepsilon+b_\varepsilon)\frac{1}{4c(\Omega)}\left(1+\frac{8\lip(g)}{c(\Omega)}\right),$$

which gives a choice for $C(\Omega)$.

We shall prove the last conclusion.  The change-of variable $y=(1+\delta)x$ shows
$$\int_U |\nabla\varphi_{\ve,\delta}|^2=\!\int_{(K_{a_\varepsilon+b_\varepsilon})/(1+\delta)}\!\!\!|(1+\delta)\nabla\varphi_\ve((1+\delta)x)|^2=\!\int_{(K_{a_\varepsilon+b_\varepsilon})}\!\!\!|\nabla\varphi_\ve(x)|^2=\!\int_\Omega |\nabla\varphi_{\ve}|^2.$$
of course this is not surprising because our change of variable is conformal. Moreover, by use of the same change of variable and using the fact that $(1-\varphi_{\varepsilon,\delta})^2 =0$ on  $ \Omega\setminus (1+\delta)^{-1}\Omega$ we also have that
$$\int_{\Omega}(1-\varphi_{\ve,\delta})^2dx=\int_{(1+\delta)^{-1}\Omega}(1-\varphi_{\ve,\delta})^2dx= \frac{1}{(1+\delta)^2} \int_{\Omega} (1-\varphi_\ve)^2dx \leq   \int_{\Omega} (1-\varphi_\ve)^2dx.$$
\end{proof}

\begin{remark} The same argument could be adapted in more general domains than star-shaped by use of  other transformations $T_\delta$ instead of $y=(1+\delta)x$, involving coefficients depending on $\det DT_\delta$ and $||DT_\delta||$, quantities which are close to $1$ if $DT_\delta$ is close to the identity matrix. 
\end{remark}


\section{The main  liminf inequality}

We are now ready to establish the main inequality that will lead to our $\Gamma$-convergence result. 

\begin{lem} \label{mainlemma} Let $\Omega\subset \R^2$ be a  bounded open set, $\mu_\varepsilon$ be a family of measures on $\Omega$ and  $\varphi_\varepsilon \in H^1(\Omega)\cap C(\overline\Omega)$ be a family of functions satisfying $\varphi_\varepsilon=1$ on $\partial \Omega$ and $0\le \varphi_\ve\le 1$ in $\Omega$, $c_\varepsilon\to 0$ and  $x_\varepsilon$ a sequence of points of $\Omega$ such that
\begin{enumerate}[{\rm ($i$)}]
\item $x_\varepsilon \to x_0$ for some $x_0 \in \overline{\Omega}$
\item $\mu_\varepsilon \destar \mu$ for some measure $\mu$ on $\Omega$
\item $d_{\varphi_\varepsilon}(x,x_\varepsilon)$ converges uniformly to some 1-Lipschitz function $d(x)$ on $\Omega$.
\item $$\sup_{\varepsilon>0} \left( \frac{1}{4\varepsilon}\int_{\Omega}(1-\varphi_\varepsilon)^2 dx + \varepsilon\int_{\Omega}\modvec \nabla \varphi_\varepsilon \modvec ^2 dx +\frac{1}{c_\varepsilon}\int_{\Omega}d_{\varphi_\varepsilon}(x,x_\varepsilon) d\mu_\varepsilon \right)\leq C<+\infty.$$
\end{enumerate}
Then
\begin{enumerate}[{\rm ($a$)}]
\item  the compact set $K:=\{x \in \overline{\Omega} \; ;\; d(x)=0\}$ is  connected,
\item $x_0\in K$,
\item $\spt(\mu)\subset K$,
\item we have
\begin{eqnarray}
\Hh^1(K) \leq \liminf_{\varepsilon\to 0} \left( \frac{1}{4\varepsilon}\int_{\Omega}(1-\varphi_\varepsilon)^2 dx + \varepsilon\int_{\Omega}\modvec \nabla \varphi_\varepsilon \modvec ^2 dx \right).\notag
\end{eqnarray}
\end{enumerate}
\end{lem}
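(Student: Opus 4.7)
Conclusions $(b)$ and $(c)$ follow by passing to the limit in the uniform convergence $d_{\varphi_\varepsilon}(\cdot,x_\varepsilon)\to d$. Indeed $d(x_0)=\lim d_{\varphi_\varepsilon}(x_\varepsilon,x_\varepsilon)=0$, so $x_0\in K$; and $(iv)$ gives $\int d_{\varphi_\varepsilon}\,d\mu_\varepsilon\leq C c_\varepsilon\to 0$, so weak-$*$ convergence of $\mu_\varepsilon$ combined with uniform convergence of $d_{\varphi_\varepsilon}$ yields $\int d\,d\mu=0$, hence $\spt(\mu)\subset\{d=0\}=K$. For $(a)$, set $\delta_\varepsilon:=2\|d_{\varphi_\varepsilon}(\cdot,x_\varepsilon)-d\|_\infty\to 0$ and consider the $\varphi_\varepsilon$-geodesic balls $A_\varepsilon:=\{y:d_{\varphi_\varepsilon}(y,x_\varepsilon)\leq\delta_\varepsilon\}$. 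Each $A_\varepsilon$ is compact, contains $K$, and is connected: the open $\varphi_\varepsilon$-ball is path-connected because any $\eta$-optimal curve from $y$ in this open ball to $x_\varepsilon$ keeps all its intermediate points at $\varphi_\varepsilon$-distance $\leq d_{\varphi_\varepsilon}(y,x_\varepsilon)+\eta<\delta_\varepsilon$ from $x_\varepsilon$, and $A_\varepsilon$ is its closure. The sandwich $K\subset A_\varepsilon\subset\{d\leq\delta_\varepsilon+\|d_{\varphi_\varepsilon}-d\|_\infty\}$ then forces Hausdorff convergence $A_\varepsilon\to K$, so $K$ is a Hausdorff limit of compact connected sets and hence compact connected.

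For $(d)$ my plan is a density argument. Up to extraction, the uniformly bounded measures $\mu_\varepsilon^{MM}:=\big(\varepsilon|\nabla\varphi_\varepsilon|^2+(1-\varphi_\varepsilon)^2/(4\varepsilon)\big)\mathscr{L}^2|_\Omega$ converge weakly-$*$ to a bounded Radon measure $\mu^{MM}$ on $\overline\Omega$. By Proposition \ref{densityLemma} it is enough to show
$$
\limsup_{r\to 0}\frac{\mu^{MM}(B(x,r))}{2r}\geq 1\quad\text{for $\Hh^1$-a.e.\ }x\in K,
$$
because this yields $\mu^{MM}\geq \Hh^1|_K$, whence $\Hh^1(K)\leq\mu^{MM}(\overline\Omega)\leq\liminf_\varepsilon \mu_\varepsilon^{MM}(\Omega)$.

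To establish the density bound, I would exploit Proposition \ref{rectifiability}: at $\Hh^1$-a.e.\ $x\in K$ there is a tangent line $T_x$ with the projection property \eqref{measproj}. Slice $B(x,r)$ by lines $\ell_y$ perpendicular to $T_x$; by \eqref{measproj} each $y\in T_x\cap B(x,(1-\eta)r)$ gives a slice crossing $K$, and (for $\varepsilon$ small) also $A_\varepsilon$. The Young-type inequality $\varepsilon|f'|^2+(1-f)^2/(4\varepsilon)\geq|f'|(1-f)$ on the slice combined with the chain rule for $h(s):=s-s^2/2$ yields $F_\varepsilon^{1D}(\varphi_\varepsilon|_{\ell_y})\geq \mathrm{Var}(h\circ\varphi_\varepsilon|_{\ell_y})$. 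Applying Lemma \ref{variation} with one pivot $c=c(y,\varepsilon)$ chosen near $K\cap\ell_y$ at a location where $\varphi_\varepsilon(c)$ is small and two intervals $I^\pm$ of length $\sim\lambda$, the averages $m^\pm$ of $h\circ\varphi_\varepsilon$ converge to $h(1)=1/2$ (by the $L^2$-convergence $\varphi_\varepsilon\to 1$) while $h(\varphi_\varepsilon(c))$ is small, giving $F_\varepsilon^{1D}(\varphi_\varepsilon|_{\ell_y})\geq 1-o(1)$. Integrating over $y$ of length $2(1-\eta)r$ produces $\mu_\varepsilon^{MM}(B(x,r))\geq 2(1-\eta)r(1-o(1))$, and sending $\eta\to 0$ closes the estimate.

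The hardest step is the choice of the pivot $c$ with $\varphi_\varepsilon(c)$ actually small: being in $A_\varepsilon$ concerns the \emph{weighted} distance and does not by itself guarantee that $\varphi_\varepsilon$ is small pointwise. The resolution exploits the geodesic: for any $a\in A_\varepsilon$, an $\eta$-optimal curve $\gamma_a$ from $a$ to $x_\varepsilon$ has total $\varphi_\varepsilon$-integral $\leq\delta_\varepsilon+\eta$, so its first $\lambda$ of arc length contains a point within Euclidean distance $\lambda$ of $a$ where $\varphi_\varepsilon\leq(\delta_\varepsilon+\eta)/\lambda\to 0$. A Fubini-type argument is then needed to transfer this 2D information onto generic transverse slices $\ell_y$. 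At this step the covering Lemma \ref{covering1}, together with Lemma \ref{Lemme2} (which ensures $I_\lambda(A_\varepsilon)\geq C\,\Hh^1(K)$ for small $\lambda$) and Lemma \ref{Lemmaproj} (relating $I_\lambda$ to directional projections), are the natural tools for handling the averaging over directions, as a complement to the tangent-line slicing done at each regular point.
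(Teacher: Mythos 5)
Your proofs of $(a)$, $(b)$, and $(c)$ are correct and essentially match the paper's: $(b)$ from $d(x_0)=\lim d_{\varphi_\varepsilon}(x_\varepsilon,x_\varepsilon)=0$, $(c)$ from $\int d_{\varphi_\varepsilon}\,d\mu_\varepsilon\to 0$ combined with the two convergences, and $(a)$ by exhibiting $K$ as a Hausdorff limit of compact connected geodesic balls $K_{\varepsilon,\delta_\varepsilon}=\{d_{\varphi_\varepsilon}(\cdot,x_\varepsilon)\le\delta_\varepsilon\}$.

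For $(d)$, your high-level strategy (pass to a weak-$*$ limit $m$ of the Modica--Mortola measures, then prove the density lower bound $\limsup_{r\to 0}m(B(x,r))/(2r)\ge 1$ at $\Hh^1$-a.e.\ $x\in K$, via tangent-line slicing, and conclude with Proposition~\ref{densityLemma}) is the same as the paper's Step~2. However there are two real gaps.

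First, you invoke Proposition~\ref{rectifiability} to get a tangent line at $\Hh^1$-a.e.\ point of $K$, but that proposition requires $\Hh^1(K)<+\infty$ as a hypothesis, and nothing in your argument establishes this finiteness beforehand. This is not a formality: the paper devotes an entire preliminary Step~1 to exactly this. There, one covers $K$ by a $\tau_0$-network $\{z_i\}$, connects each $z_i$ to $x_\varepsilon$ by a curve $\Gamma_i^\varepsilon$ with small $\varphi_\varepsilon$-weighted length, and applies the variation Lemma~\ref{variation} along transverse slices of $(\Gamma_\varepsilon)_{\lambda,\nu}$, integrating over the direction $\nu\in\mathbb{S}^1$. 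Combining this with the uniform lower bound $I_\lambda(\Gamma_\varepsilon)\gtrsim L$ from Lemma~\ref{Lemme2} and passing to the Hausdorff limit twice (in $\delta_0$ and $\tau_0$, using Golab) yields $\Hh^1(K)\le 2C<\infty$. You mention Lemma~\ref{Lemme2} and Lemma~\ref{Lemmaproj} only as auxiliary tools ``for averaging over directions,'' but the role they actually play in the paper is to justify the compactness/finiteness step that must precede the tangent-line argument. Your proposal would have a circular dependency without this.

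Second, your resolution of the pivot term does not work as sketched. You correctly identify the difficulty that $a\in A_\varepsilon$ only controls the weighted distance, not $\varphi_\varepsilon$ pointwise, and propose walking along an $\eta$-optimal geodesic $\gamma_a$ for arc length $\lambda$ to find a point where $\varphi_\varepsilon\le(\delta_\varepsilon+\eta)/\lambda$. But that point is only within Euclidean distance $\lambda$ of $a$; it need not lie on the transverse slice $\ell_y$ over which you want to apply Lemma~\ref{variation}. So the pivot you would use in the one-dimensional variation estimate is not where $\varphi_\varepsilon$ is small. The paper avoids this trap entirely by \emph{not} requiring a pointwise small value at each pivot: it chooses the pivots $g_t$ to lie on the union $\Gamma_\varepsilon=\Gamma_\varepsilon(t'_L)\cup\Gamma_\varepsilon(t'_R)$ of just two geodesic curves (selected using the confinement $K\cap C_\lambda(r)\subset W(2\beta,r)$ and the fact that $x_\varepsilon\notin C_\lambda(r)$, which forces each $\Gamma_\varepsilon(t)$ to exit through $C^+$ or $C^-$), and then bounds the \emph{integrated} pivot term $\int f_t(g_t)\,dt$ by $\int_{\Gamma_\varepsilon}P(\varphi_\varepsilon)\,d\Hh^1\le 2\delta_\varepsilon$ via the co-area formula with factor $\le 1$. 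This integrated control is the decisive idea; without it the argument does not close.
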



\begin{proof}  Up to extracting a subsequence in $\varepsilon$ we may assume that the liminf in $(d)$ is a true limit. For the sake of simplicity, in the sequel we will continue to denote by $\varepsilon$ the index of this subsequence, and it will still be the same for any further subsequences in the future.

For every $\varepsilon$ and $\delta>0$ we define
$$K_{\varepsilon,\delta}:=\{x\in \overline{\Omega} \;;\;d_{\varphi_{\varepsilon}}(x,x_\varepsilon)\leq \delta\}.$$ 
Since these sets are all compact sets contained in $\overline{\Omega}$, up to a subsequence we can assume that
$K_{\varepsilon,\delta}$ converges to some $K_\delta$ for the Hausdorff distance when $\varepsilon \to 0$. Next we define
$$K':=\bigcap_{\delta>0} K_\delta.$$
Notice that for  any pair of points $x,y \in K_{\varepsilon,\delta}$, the geodesic curve that realized the  distance $d_{\varphi_{\varepsilon}}(x,y)$  connecting $x$ to $y$ is totally contained in $K_{\varepsilon,\delta}$, thus $K_{\varepsilon, \delta}$ is  path connected. Consequently, $K_\delta$ is connected as Hausdorff limit of connected sets, and therefore $K'$ is also connected as  a decreasing intersection of connected sets (which is also a Hausdorff limit). But since $d_{\varphi_\varepsilon}(x,x_\varepsilon)$ converges uniformly to some 1-Lipschitz function $d(x)$, it is easy to see that $K'=K$. Indeed, for each $\delta>0$ we have $\{d<\delta\}\subset K_\delta\subset\{d\leq \delta\}$, which implies $K'=\{d=0\}$. In particular, since $d_{\varphi_\varepsilon}(x_0,x_\varepsilon)\leq |x_\ve-x_0| \to 0$, we obtain  that $d(x_0)=0$, thus $x_0 \in K$.

The next part of the claim, i.e. $\spt(\mu) \subseteq K$
is an easy consequence of the fact that 
$$\int_{\Omega} d_{\varphi_\varepsilon}(x,x_\varepsilon)d\mu_\ve\to 0,$$
which gives, thanks to the uniform convergence $d_{\varphi_\varepsilon}(x,x_\varepsilon)\to d(x)$ and the weak convergence of the measures, $\int_{\Omega}d(x)\,d\mu(x)=0$. Since $d$ is a continuous function, we get $d=0$ on $\spt(\mu)$ and hence $\spt(\mu)\subset K$.

At this stage $(a), (b), (c)$ are proved and it remains to prove $(d)$. This will be achieved in two main steps.

\vspace{0.5cm}
\noindent \emph{Step 1. Rectifiability of $K$.} We first prove that
\begin{eqnarray}
 \mathcal{H}^1(K) < +\infty .\label{limiinf0}
\end{eqnarray}

Fix $\delta_0,\tau_0>0$, and let $\{z_1, z_2, \dots, z_N\}\subseteq K$ be a $\tau_0$-network in $K$, which means
 $$K\subseteq  \bigcup_{1\leq i\leq N} B(z_i,\tau_0).$$
Due to the uniform convergence $d_{\varphi_\varepsilon}(\cdot,x_\varepsilon)\to d$ and the fact that $d(z_i)=0$ for all $1\leq i\leq N$, there exists an $\varepsilon_1>0$, depending on $\delta_0$ and $\tau_0$, such that the following holds : for any $\varepsilon<\varepsilon_1$ there exists some $C^1$ regular curves $\Gamma_i^\varepsilon$ (of finite length) connecting $z_i$ to  $x_\varepsilon$ and satisfying
\begin{eqnarray}
\int_{\Gamma_i^\varepsilon} \varphi_\varepsilon(s)d\Hh^1(s) < \delta_0,\quad \forall 1\leq i \leq N. \label{courbespetites}
\end{eqnarray}
Now we consider
$$\Gamma_\varepsilon := \bigcup_{1\leq i \leq N} \Gamma_i^\varepsilon,$$
(which also depends on $\delta_0$ and $\tau_0$ but we do not make it explicit to lighten the notation). Our goal is to estimate the quantity $I_\lambda(\Gamma_\varepsilon)$ (recall the definition of $I_\lambda$ in \eqref{defIlambda}).

In view of applying Lemma \ref{Lemme2}, we denote by $\Gamma_0$ the Hausdorff limit of $\Gamma_\varepsilon$, which surely exists up to extracting a subsequence. Let  $\lambda>0$ be a small enough parameter, and $\varepsilon_0>0$ be given by Lemma \ref{Lemme2} applied with $L<\Hh^1(\Gamma_0)$, in such a way that
\begin{eqnarray}
L\leq CI_\lambda(\Gamma_\varepsilon) \quad \forall \varepsilon<\varepsilon_0. \label{tricky}
\end{eqnarray}

Let now $\nu \in S^1$ be an arbitrary direction: for any $t \in \R$ we denote by $L_t$ the affine line $\R \nu + t \nu^{\bot}$. Since $\Gamma_\varepsilon$ is a finite union of curves of finite length, we know that
$$\Hh^0(L_t \cap \Gamma_\varepsilon)<+\infty \; , \quad \text{ for a.e. } t \in \R.$$
Let $G\subset \R$ be the set of such $t\in \R$ for which  $\Hh^0(L_t \cap \Gamma_\varepsilon)<+\infty$, and pick any $t \in G$.  Let $\{x_j\}_{j \in J}$ be the finite set of $\Gamma_\varepsilon \cap L_t$. We brutally identify $x_j$ with its coordinate on the line $L_t$ and we assume that they are labelled in increasing order, i.e. $x_j< x_{j+1}$. Next, we decompose the relative interior $Int((\Gamma_\varepsilon)_{\lambda,\nu}\cap L_t)$ as follows :

$$Int((\Gamma_\varepsilon)_{\lambda,\nu}\cap L_t) = \bigcup_{j \in J}I_j^- \cup I_j^+,$$
where
\[
I_j^+:=
\left\{
\begin{array}{ll}
[x_j, x_j+\lambda)  & \text{ if }  |x_{j+1}-x_{j}|\geq 2\lambda \\
$[$ x_j, \frac{x_j+x_{j+1}}{2} $]$& \text{otherwise}
\end{array}
\right.
\]

\[
I_j^-:=
\left\{
\begin{array}{ll}
(x_j-\lambda, x_j ]  & \text{ if }  |x_{j}-x_{j-1}|\geq 2\lambda \\
$($ \frac{x_j+x_{j-1}}{2} , x_j$]$& \text{ otherwise }
\end{array}
\right.
\]

Where by convention  $x_{-1}=-\infty$ and $x_{N+1}=+\infty$ so that $I_1^-$ and $I_N^+$ are well defined.  Notice also that  if $x_j$ lies too close to $\partial \Omega$, it could be that $I_j^+$ and $I_j^-$ goes a bit outside $\Omega$, but this will not be a problem in the sequel.
Indeed, the function $\varphi_\ve$ can be extended to the whole $\R^2$ by taking the value $1$ outside $\Omega$ (this is a consequence of $1-\varphi_\ve\in H^1_0(\Omega)$). From the definition of the functional $F_\varepsilon(v_\varepsilon,\varphi_\varepsilon, y_\varepsilon)$, extending with a constant value $1$ does not change the integral, since we have $\nabla\varphi_\ve=0$ and $(1-\varphi_\ve)^2=0$ outside $\Omega$. Hence, we can think that the functions are defined on a larger domain $\Omega'$ including $\overline\Omega$ in its interior and avoid caring about boundary issues.

Let $P(s):=s-s^2/2$ be the primitive of $s\mapsto (1-s)$ satisfying $P(0)=0$ and $P(1)=1/2$. Arguing as Modica and Mortola \cite{MM}, using the inequality $\frac{1}{4\varepsilon}a^2+\varepsilon b^2 \geq ab$, we infer that
\begin{eqnarray}
C&\geq& F_\varepsilon(v_\varepsilon,\varphi_\varepsilon, y_\varepsilon)\geq   \frac{1}{4\varepsilon}\int_{\Omega}(1-\varphi_\varepsilon)^2 dx + \varepsilon \int_{\Omega} \modvec \nabla \varphi_\varepsilon\modvec ^2 dx \notag \\
&\geq&  \int_\Omega (1-\varphi_\varepsilon) \modvec \nabla \varphi_\varepsilon \modvec  dx \geq  \int_\Omega \modvec \nabla (P(\varphi_\varepsilon))\modvec  dx. \notag
\end{eqnarray}
Hence, we can go on with
\begin{eqnarray}
C &\geq &  \int_{\Omega'} |\nabla_{\nu} (P(\varphi_\varepsilon))| dx = \int_{\R} Var(f_t,L_t\cap\Omega'), \label{inequu}
\end{eqnarray}
where $f_t:=P(\varphi_\varepsilon)|_{L_t}$. On the other hand  applying Lemma \ref{variation} we can write
\begin{eqnarray}
Var(f_t, L_t\cap \Omega)\geq Var(f_t,  Int((\Gamma_\varepsilon)_{\lambda,\nu}\cap L_t)) \geq \sum_{j \in J}m_j^++m_j^- -2f_t(x_j), \label{estimationVar}
\end{eqnarray}
where $m_j^\pm$ denotes the average of $f_t$ on $I^\pm_j$. Since $\Hh^1(I_j^\pm)\leq \lambda$ for all $j$, we deduce that
\begin{eqnarray}
Var(f_t, L_t\cap \Omega)\geq \frac{1}{\lambda}\int_{(\Gamma_\varepsilon)_{\lambda,\nu}\cap L_t} f_t(s) ds - 2 \sum_{j \in J}f_t(x_j). \notag
\end{eqnarray}
Integrating over $t\in \R$, applying Fubini's Theorem   and using \eqref{inequu} it comes
\begin{eqnarray}
\frac{1}{\lambda}\int_{(\Gamma_\varepsilon)_{\lambda,\nu}}  P(\varphi_\varepsilon(x))dx - 2 \int_{\R}\sum_{j \in J}f_t(x_j) dt \leq C \label{inequneige}
\end{eqnarray}
Now we estimate the second term in the left hand side of \eqref{inequneige}. The co-area formula  (see for instance the equality (2.72) page 101 of \cite{afp}) applied on the $1$-rectifiable set  $\Gamma_\varepsilon$ provides
$$ \int_{\R}\sum_{j \in J}f_t(x_j) dt =   \int_{\Gamma_\varepsilon} P(\varphi_\varepsilon(x))C_\nu(x)d\Hh^1(x),$$
where $C_\nu(x)$ denotes the one dimensional co-area factor associated with the orthogonal projection $x \mapsto \langle x,\nu^{\bot}\rangle$. Since the latter mapping is $1$-Lipschitz, it is easy to verify that $|C_\nu(x)|\leq 1$ yielding
\begin{eqnarray}
 \int_{\R}\sum_{j \in J}f_t(x_j) dt &\leq&    \int_{\Gamma_\varepsilon} P(\varphi_\varepsilon(x))d\Hh^1(x) \notag \\
 &\leq&  \sum_{1\leq i\leq N}\int_{\Gamma_i^\varepsilon} P(\varphi_\varepsilon(x))d\Hh^1(x) \notag \\
 &\leq &  \sum_{1\leq i\leq N}\int_{\Gamma_i^\varepsilon} \varphi_\varepsilon(x)d\Hh^1(x) \label{onsort} \\
 &\leq & N\delta_0. \label{onsort2}
\end{eqnarray}
For \eqref{onsort} we have used $P(\varphi_\varepsilon)=\varphi_\varepsilon-\varphi_\varepsilon^2/2\leq \varphi_\varepsilon$, and for \eqref{onsort2} we have used \eqref{courbespetites}.

Returning now to \eqref{inequneige}, we have proved that
\begin{eqnarray}
\frac{1}{\lambda}\int_{(\Gamma_\varepsilon)_{\lambda,\nu}}  P(\varphi_\varepsilon(x))dx  \leq C + 2N\delta_0. \label{jolibound0}
\end{eqnarray}
Let us emphasis that $C$ is a uniform  constant, but $N$ depends  on $\tau_0$ (and is independent of $\nu$ and $\delta_0$). From \eqref{jolibound0}  we get
\begin{eqnarray}
\frac{1}{2\lambda} \mathscr{(L}^2((\Gamma_\varepsilon)_{\lambda,\nu})  =\frac{1}{\lambda}\int_{(\Gamma_\varepsilon)_{\lambda,\nu}}  P(1)dx  \leq C + 2N\delta_0 + \frac{1}{\lambda}\int_{\Omega'}|P(1)-P(\varphi_\varepsilon)|, \notag
\end{eqnarray}
and finally, taking the average over $\nu \in \mathbb{S}^1$, we get
\begin{eqnarray}
\frac{1}{2}I_\lambda(\Gamma_\varepsilon )= \frac{1}{4\pi\lambda} \int_{\mathbb{S}^1}\mathscr{(L}^2((\Gamma_\varepsilon)_{\lambda,\nu}) d\nu  \leq C + 2N\delta_0 + \frac{1}{\lambda}\int_{\Omega'}|P(1)-P(\varphi_\varepsilon)|dx. \notag
\end{eqnarray}
Then, due to \eqref{tricky}, it follows that, after fixing $\tau_0$ and $\delta_0$, getting some set $\Gamma_\ve$ and a limit set $\Gamma_0$, the inequality
\begin{eqnarray}
L \leq 2C + 4N\delta_0+ \frac{2}{\lambda}\int_{\Omega'}|P(1)-P(\varphi_\varepsilon)|dx \label{yuyu}
\end{eqnarray}
holds for all $L<\Hh^1(\Gamma_0)$, for the value of $\lambda$ that we have fixed, and $\varepsilon<\varepsilon_0$. We now let $\varepsilon \to 0$.  Since  $P(\varphi_\varepsilon)$ is uniformly bounded in $L^\infty(\Omega)$, and $\varphi_\varepsilon\to 1$ a.e. in $\Omega$, we obtain $P(\varphi_\varepsilon)\to P(1)$ strongly in $L^1(\Omega)$ (and $L^1(\Omega')$), which yields
\begin{eqnarray}
L \leq 2C + 4N\delta_0. \label{yuyu20}
\end{eqnarray}
$L$ being an arbitrary number smaller than $\Hh^1(\Gamma_0)$, we get
\begin{eqnarray}
\Hh^1(\Gamma_0) \leq 2C + 4N\delta_0. \label{yuyu20sans L}
\end{eqnarray}
Recall that $\Gamma_0$ is a connected set containing the $\tau_0$-dense set of points $\{z_i \}\subseteq K$, of total number $N$ depending only on $\tau_0$. Recall also that $\Gamma_0$ is obtained as the Hausdorff limit of union of curves $\Gamma_i^\varepsilon$, that are defined upon the parameter $\delta_0$. Therefore, $\Gamma_0$ depends a priori on $\delta_0$ as well. This is why we define, up to a subsequence of $\delta_0\to 0$, the Hausdorff limit of $\Gamma_0(\delta_0)$ that we denote by $\Gamma_{00}$. The set $\Gamma_{00}$ is still a connected set containing the $\tau_0$-dense set of points $\{z_i \}\subset K$. By passing to the liminf in \eqref{yuyu20} and by use of Golab Theorem we get
\begin{eqnarray}
\Hh^1(\Gamma_{00}) \leq\liminf_{\delta_0\to 0} \Hh^1(\Gamma_0) \leq 2C . \label{yuyu3}
\end{eqnarray}
But now letting $\tau_0\to 0$ we get, through a suitable subsequence, a Hausdorff limit set $\Gamma_{000}$ which is still connected and surely contains $K$ because $\Gamma_{00}$ contained a $\tau_0-$network of $K$. By use of Golab Theorem again we get
\begin{eqnarray}
\Hh^1(K)\leq \Hh^1(\Gamma_{000}) \leq\liminf_{\tau_0\to 0} \Hh^1(\Gamma_{00}) \leq 2C . \label{yuyu3}
\end{eqnarray}
This implies \eqref{limiinf0} and finishes the proof of the first step.

\vspace{0.5cm}
\noindent \emph{Step 2. More precise lower bound. } Now that we know that $K$ is rectifiable, we will improve the lower bound. Namely, we shall now prove $(d)$.

For this purpose we consider the following family of measures supported on $\overline{\Omega}$,
$$m_\varepsilon = \left(\frac{1}{4\varepsilon}(1-\varphi_\varepsilon)^2  + \varepsilon \modvec \nabla \varphi_\varepsilon\modvec ^2 \right)\mathscr{L}^2|_{\Omega},$$
that we assume to be weakly-$*$ converging to some measure $m$ supported on $\overline{\Omega}$ (this is not restrictive up to extracting a subsequence, thanks to the bound in $(iv)$).

Applying Lemma \ref{rectifiability} to the set $K$, we know that
$\Hh^1$-a.e. point $x\in K$ admits a tangent line. Let $T_x$ be the tangent
line to $x$. We assume  without loss of generality that $x=0$ and
$T_x=\R e_1$.  We denote by $\pi$ the orthogonal projection onto the one-dimensional vector space $\R e_1$. Let $0< \lambda <1$ be fixed (very close to $1$).
Lemma \ref{rectifiability} provides that for some $r_0>0$ and for all
$r\leq r_0$,  (that we suppose small enough so that
$B(x,r)\subseteq \Omega$)
\begin{eqnarray}
\pi(K\cap B(x, r)) \supseteq  [-\lambda r, \lambda r]. \label{measproj0}
\end{eqnarray}

Then  we consider the rectangle
$$C_{\lambda}(r):= [-\lambda r, \lambda r ]\times [-hr,hr],$$
with $h= \sqrt{1-\lambda^2}$, so that $C_\lambda(r)\subset B(x,r)$ (see Picture 2 below). We want to estimate $m_\varepsilon(C_{\lambda}(r))$, for $r$ small.

Let $\beta>0$ be a very small parameter satisfying
\begin{eqnarray}
\beta \leq  10^{-3}h. \label{defbeta}
\end{eqnarray}
Up to taking a smaller $r_0$ we may also suppose that for all $r\leq r_0$,  $K \cap C_{\lambda}(r)\subset W(\beta,r)$, where $W(\beta,r)$ is a small strip near the tangent of relative width $\beta$, namely
\begin{eqnarray}
W(\beta,r):= \{y \in C_{\lambda}(r) ; d(y,T_x)\leq  r\beta\}. \label{defW}
\end{eqnarray}
Let us define $\delta_\ve:=||d_{\varphi_\ve}(\cdot,x_\ve)-d||_{L^\infty}$, which is a sequence converging to $0$ as $\ve\to 0$, and consider the sets $K_{\ve,\delta_\ve}$, which converge in the Hausdorff topology to $K$. Moreover, we have $K\subset K_{\ve,\delta_\ve}$. We can also define some $\varepsilon_0>0$ small enough so that :
\begin{eqnarray}
K_{\varepsilon,\delta_\ve}\subseteq W(2\beta,r)\quad \forall \varepsilon \leq \varepsilon_0;\label{confinement}
\end{eqnarray}
Finally, we can also guarantee that
\begin{eqnarray}
x_\varepsilon \in K_{\varepsilon,\delta_\varepsilon} \setminus C_{\lambda}(r) \quad \forall \varepsilon \leq \varepsilon_0 \text{ and } \forall r\leq r_0. \label{haha}
\end{eqnarray}

Under those conditions we are sure that $K\cap B(x,r)\cap \pi^{-1}([-\lambda r,\lambda r])\subset C_{\lambda}(r)$ thus it follows from \eqref{measproj0} that for all $t\in  [- \lambda r,   \lambda r]$ we can find a point $z_t$  that belongs to $\pi^{-1}(t)\cap K$ (and hence also to $\pi^{-1}(t)\cap K_{\ve,\delta_\ve}$). Since $d_{\varphi_\ve}(z_t)<\delta_\ve$, thanks to \eqref{haha}, for every $z_t$ there exists a curve $\Gamma_{\varepsilon}(t)$ connecting $z_t$ to $x_\varepsilon$ and such that
$$\int_{\Gamma_{\varepsilon}(t)}\varphi_{\varepsilon}(s)ds\leq \delta_\ve.$$
Because of \eqref{confinement}, and the fact that $x_\varepsilon$ lies outside $C_{\lambda}(r)$, the curve $\Gamma_\varepsilon(t)$ must first exit $C_{\lambda}(r)$ on either the left or the right side of $C(x,r)$. More precisely, denoting by $C^+$ and $C^-$ the two connected components of $\partial C_{\lambda}(r)\cap W(\beta,r)$,
$$C^\pm:= \{y\in \partial C_\lambda(r); d(x,(\pm  r,0))\leq \beta r\},$$
we must have that
$$\Gamma_\varepsilon(t)\cap C^+ \not = \emptyset\text{ or }\Gamma_\varepsilon(t)\cap C^- \not = \emptyset$$
as in the picture below.
\vspace{1cm}
\begin{center}
\scalebox{1} 
{
\begin{pspicture}(0,-2.89)(12.8,2.89)
\definecolor{color1b}{rgb}{0.8,0.8,1.0}
\definecolor{color124b}{rgb}{0.2,0.6,1.0}
\definecolor{color155b}{rgb}{0.8,0.8,0.8}
\psellipse[linewidth=0.04,dimen=outer](4.97,0.0)(2.89,2.89)
\psframe[linewidth=0.04,dimen=outer,fillstyle=solid,fillcolor=color1b](7.76,0.65)(2.18,-0.55)
\usefont{T1}{ptm}{m}{n}
\rput(10.37,0.315){$h=\sqrt{2\lambda(1-\lambda)}$}
\usefont{T1}{ptm}{m}{n}
\rput(4.86,0.975){$2\lambda r$}
\usefont{T1}{ptm}{m}{n}
\rput(5.19,-0.805){$C_{\lambda}(r)$}
\usefont{T1}{ptm}{m}{n}
\rput(0.91,0.135){$T_x$}
\usefont{T1}{ptm}{m}{n}
\rput(7.56,2.475){$B(x,r)$}
\usefont{T1}{ptm}{m}{n}
\rput(9.73,1.675){$K$}
\psframe[linewidth=0.018,dimen=outer,fillstyle=solid,fillcolor=color124b](7.74,0.23)(2.2,-0.13)
\psline[linewidth=0.04cm,fillcolor=color155b](8.16,0.07)(1.52,0.05)
\psline[linewidth=0.03cm,fillcolor=color155b,arrowsize=0.05291667cm 2.0,arrowlength=1.4,arrowinset=0.4]{<-}(2.86,-0.15)(1.56,-1.93)
\usefont{T1}{ptm}{m}{n}
\rput(1.59,-2.145){$W(\beta,r)$}
\psline[linewidth=0.03cm,fillcolor=color155b,arrowsize=0.05291667cm 2.0,arrowlength=1.4,arrowinset=0.4]{<-}(2.22,-0.05)(1.12,-0.97)
\usefont{T1}{ptm}{m}{n}
\rput(0.92,-1.205){$C^-$}
\pscustom[linewidth=0.03]
{
\newpath
\moveto(9.48,1.47)
\lineto(9.23,1.42)
\curveto(9.105,1.395)(8.92,1.36)(8.86,1.35)
\curveto(8.8,1.34)(8.715,1.31)(8.69,1.29)
\curveto(8.665,1.27)(8.615,1.225)(8.59,1.2)
\curveto(8.565,1.175)(8.525,1.13)(8.51,1.11)
\curveto(8.495,1.09)(8.47,1.04)(8.46,1.01)
\curveto(8.45,0.98)(8.405,0.895)(8.37,0.84)
\curveto(8.335,0.785)(8.28,0.71)(8.26,0.69)
\curveto(8.24,0.67)(8.195,0.615)(8.17,0.58)
\curveto(8.145,0.545)(8.1,0.49)(8.04,0.43)
}
\pscustom[linewidth=0.03]
{
\newpath
\moveto(8.36,0.81)
\lineto(8.74,0.81)
\curveto(8.93,0.81)(9.185,0.81)(9.25,0.81)
\curveto(9.315,0.81)(9.38,0.8)(9.38,0.77)
}
\pscustom[linewidth=0.03]
{
\newpath
\moveto(8.12,0.49)
\lineto(8.12,0.43)
\curveto(8.12,0.4)(8.115,0.35)(8.11,0.33)
\curveto(8.105,0.31)(8.075,0.28)(8.05,0.27)
\curveto(8.025,0.26)(7.98,0.24)(7.96,0.23)
\curveto(7.94,0.22)(7.9,0.205)(7.88,0.2)
\curveto(7.86,0.195)(7.815,0.185)(7.79,0.18)
\curveto(7.765,0.175)(7.715,0.165)(7.69,0.16)
\curveto(7.665,0.155)(7.61,0.145)(7.58,0.14)
\curveto(7.55,0.135)(7.49,0.13)(7.46,0.13)
\curveto(7.43,0.13)(7.375,0.13)(7.35,0.13)
\curveto(7.325,0.13)(7.27,0.13)(7.24,0.13)
\curveto(7.21,0.13)(7.155,0.125)(7.13,0.12)
\curveto(7.105,0.115)(7.04,0.11)(7.0,0.11)
\curveto(6.96,0.11)(6.895,0.11)(6.87,0.11)
\curveto(6.845,0.11)(6.765,0.11)(6.71,0.11)
\curveto(6.655,0.11)(6.545,0.105)(6.49,0.1)
\curveto(6.435,0.095)(6.35,0.085)(6.32,0.08)
\curveto(6.29,0.075)(6.22,0.05)(6.18,0.03)
\curveto(6.14,0.01)(6.05,-0.02)(6.0,-0.03)
\curveto(5.95,-0.04)(5.875,-0.055)(5.85,-0.06)
\curveto(5.825,-0.065)(5.775,-0.07)(5.75,-0.07)
\curveto(5.725,-0.07)(5.68,-0.06)(5.66,-0.05)
\curveto(5.64,-0.04)(5.585,-0.02)(5.55,-0.01)
\curveto(5.515,0.0)(5.45,0.015)(5.42,0.02)
\curveto(5.39,0.025)(5.34,0.035)(5.32,0.04)
\curveto(5.3,0.045)(5.275,0.05)(5.26,0.05)
}
\pscustom[linewidth=0.03]
{
\newpath
\moveto(4.04,0.05)
\lineto(3.96,0.05)
\curveto(3.92,0.05)(3.85,0.055)(3.82,0.06)
\curveto(3.79,0.065)(3.74,0.075)(3.72,0.08)
\curveto(3.7,0.085)(3.66,0.095)(3.64,0.1)
\curveto(3.62,0.105)(3.575,0.115)(3.55,0.12)
\curveto(3.525,0.125)(3.47,0.13)(3.44,0.13)
\curveto(3.41,0.13)(3.355,0.13)(3.33,0.13)
\curveto(3.305,0.13)(3.26,0.135)(3.24,0.14)
\curveto(3.22,0.145)(3.175,0.15)(3.15,0.15)
\curveto(3.125,0.15)(3.075,0.15)(3.05,0.15)
\curveto(3.025,0.15)(2.985,0.15)(2.94,0.15)
}
\pscustom[linewidth=0.03]
{
\newpath
\moveto(2.94,0.15)
\lineto(2.88,0.15)
\curveto(2.85,0.15)(2.79,0.15)(2.76,0.15)
\curveto(2.73,0.15)(2.665,0.15)(2.63,0.15)
\curveto(2.595,0.15)(2.525,0.15)(2.49,0.15)
\curveto(2.455,0.15)(2.395,0.15)(2.37,0.15)
\curveto(2.345,0.15)(2.295,0.15)(2.27,0.15)
\curveto(2.245,0.15)(2.18,0.15)(2.14,0.15)
\curveto(2.1,0.15)(2.015,0.155)(1.97,0.16)
\curveto(1.925,0.165)(1.825,0.19)(1.77,0.21)
\curveto(1.715,0.23)(1.62,0.275)(1.58,0.3)
\curveto(1.54,0.325)(1.48,0.37)(1.46,0.39)
\curveto(1.44,0.41)(1.415,0.43)(1.4,0.43)
}
\pscustom[linewidth=0.03]
{
\newpath
\moveto(2.8,0.15)
\lineto(2.76,0.13)
\curveto(2.74,0.12)(2.705,0.09)(2.69,0.07)
\curveto(2.675,0.05)(2.645,0.015)(2.63,0.0)
\curveto(2.615,-0.015)(2.575,-0.035)(2.55,-0.04)
\curveto(2.525,-0.045)(2.475,-0.05)(2.45,-0.05)
\curveto(2.425,-0.05)(2.38,-0.045)(2.36,-0.04)
\curveto(2.34,-0.035)(2.305,-0.02)(2.29,-0.01)
\curveto(2.275,0.0)(2.24,0.015)(2.22,0.02)
\curveto(2.2,0.025)(2.16,0.02)(2.14,0.01)
\curveto(2.12,0.0)(2.085,-0.02)(2.07,-0.03)
\curveto(2.055,-0.04)(2.015,-0.06)(1.99,-0.07)
\curveto(1.965,-0.08)(1.92,-0.1)(1.9,-0.11)
\curveto(1.88,-0.12)(1.835,-0.145)(1.81,-0.16)
\curveto(1.785,-0.175)(1.74,-0.2)(1.72,-0.21)
\curveto(1.7,-0.22)(1.655,-0.25)(1.63,-0.27)
\curveto(1.605,-0.29)(1.53,-0.34)(1.48,-0.37)
\curveto(1.43,-0.4)(1.355,-0.445)(1.33,-0.46)
\curveto(1.305,-0.475)(1.265,-0.505)(1.22,-0.55)
}
\pscustom[linewidth=0.03]
{
\newpath
\moveto(1.24,-0.53)
\lineto(1.1,-0.53)
\curveto(1.03,-0.53)(0.895,-0.53)(0.83,-0.53)
\curveto(0.765,-0.53)(0.605,-0.53)(0.51,-0.53)
\curveto(0.415,-0.53)(0.275,-0.52)(0.23,-0.51)
\curveto(0.185,-0.5)(0.12,-0.475)(0.1,-0.46)
\curveto(0.08,-0.445)(0.04,-0.41)(0.02,-0.39)
\curveto(0.0,-0.37)(-0.015,-0.34)(0.0,-0.31)
}
\psline[linewidth=0.03cm,fillcolor=color155b,arrowsize=0.05291667cm 2.0,arrowlength=1.4,arrowinset=0.4]{<->}(7.74,0.71)(2.18,0.71)
\psline[linewidth=0.03cm,fillcolor=color155b,arrowsize=0.05291667cm 2.0,arrowlength=1.4,arrowinset=0.4]{<->}(8.46,0.61)(8.46,0.05)
\psline[linewidth=0.03cm,fillcolor=color155b,arrowsize=0.05291667cm 2.0,arrowlength=1.4,arrowinset=0.4]{<-}(7.76,-0.07)(8.72,-0.89)
\usefont{T1}{ptm}{m}{n}
\rput(9.36,-0.885){$C^+$}
\end{pspicture} 
}

\vspace{1cm}
\end{center}
Let us define
$$t_R:= \inf \{t \in [-r,r]; \Gamma_\varepsilon(t)\cap C^+ \not = \emptyset \}$$
$$t_L:= \sup \{t  \in [-r,r]; \Gamma_\varepsilon(t)\cap C^- \not = \emptyset \}.$$
Of course the two sets on which we take the lower and upper bounds are not empty because
$\Gamma_\varepsilon(\pm\lambda r)\cap C^\pm = \{(\pm  r,0)\} \not = \emptyset$. Also notice that we necessarily have $t_R\leq t_L$. Indeed, the opposite inequality holds, then for all the points $t\in ]t_L,t_R[$, the curve $\Gamma_\ve(t)$ would not meet neither $C^+$ not $C^-$, which is impossible.  Then, take $t'_L<t_L$ and $t'_R<t_R$ such that $|t'_R-t'_L|<\ve$ and define
$$\Gamma_\varepsilon:= \Gamma_\varepsilon(t'_L)\cup\Gamma_\varepsilon(t'_R).$$
The set $\Gamma_\varepsilon$ is not necessarily connected, but we have
\begin{eqnarray}
\pi(\Gamma_\varepsilon)\supset I_{\lambda,r,\ve}:= [- \lambda r,  \lambda r]\,\setminus\, ]t'_L,t'_R[. \label{projectionn}
\end{eqnarray}
For every $t\in  I_{\lambda,r,\ve}$ we denote by $g_t$ a point in $\Gamma_\ve\cap\pi^{-1}(t)$.

Let us  now estimate
\begin{eqnarray}
m_\varepsilon(C_\lambda ( r))&=&    \frac{1}{4\varepsilon}\int_{C_{\lambda}(r)}(1-\varphi_\varepsilon)^2 dx + \varepsilon \int_{C_{\lambda}( r)} \modvec \nabla \varphi_\varepsilon\modvec ^2 dx \notag \\
&\geq&  \int_{C_{\lambda}( r)} (1-\varphi_\varepsilon) \modvec \nabla \varphi_\varepsilon \modvec  dx \geq  \int_{C_{\lambda}( r)} \modvec \nabla (P(\varphi_\varepsilon))\modvec  dx \notag \\
&\geq &  \int_{C_{\lambda}( r)} \Big|\frac{\partial}{\partial x_2}(P(\varphi_\varepsilon))\Big| dx \geq \int_{-\lambda r}^{ \lambda r} Var(f_t,[-hr,hr])dt \notag
\end{eqnarray}
where  $f_t:=P(\varphi_\varepsilon)|_{L_t}$ with $L_t:=t+\R e_2$. On the other hand  applying Lemma \ref{variation}, for every $t\in  I_{\lambda,r,\ve}$, we can write
\begin{eqnarray}
Var(f_t, I_t ) \geq m^+_t+m^-_t -2f_t(g_t), \label{estimationVar}
\end{eqnarray}
where $m^+$ denotes the average of $f_t$ on $[g_t,hr ]$ and $m^-$ denotes the average of $f_t$ on $[-hr, g_t]$ (here we identified $g_t$ on the line $L_t$ with its coordinate on the second axis). Observe that the length of each of those two intervals lies between $hr - \beta r$ and $hr + \beta r$ , which is positive thanks to \eqref{defbeta}.
Next,
\begin{eqnarray}
Var(f_t, [-hr,hr] ) \geq \frac{1}{r(h+\beta)}\int_{I_t} f_t(s) ds - 2 f_t(g_t). \notag
\end{eqnarray}


Integrating over $t\in I_{\lambda,r,\ve}= [- \lambda r, \lambda r]\setminus [t'_L,t'_R]$ and applying Fubini's Theorem  it comes
\begin{eqnarray}
m_\varepsilon(C_{\lambda}(r))  &\geq  &\frac{1}{r(h+\beta)}\int_{I_{\lambda,r,\ve}\times [-hr,hr]}   P(\varphi_\varepsilon(x))dx - 2 \int_{I_{\lambda,r,\ve}}f_t(g_t) dt \label{inequneige00U}\\
&\geq & \frac{1}{r(h+\beta)}\int_{C_\lambda(r)}   P(\varphi_\varepsilon(x))dx -\frac{\ve h}{2(h+\beta)}- 2 \int_{I_{\lambda,r,\ve}}f_t(g_t) dt
\end{eqnarray}
To estimate the last term in the left hand side of \eqref{inequneige00U} we use the same argument as for \eqref{inequneige} relying on   the co-area formula  to  say that
\begin{eqnarray}
 \int_{I_{\lambda,r,\ve}}f_t(g_t) dt &\leq&    \int_{\Gamma_\varepsilon} P(\varphi_\varepsilon(x))d\Hh^1(x) \notag \\
 &\leq&  \int_{\Gamma_\varepsilon(t'_L)} P(\varphi_\varepsilon(x))d\Hh^1(x) +\int_{\Gamma_\varepsilon(t'_R)} P(\varphi_\varepsilon(x))d\Hh^1(x)\notag \\
 &\leq &  \int_{\Gamma_\varepsilon(t'_L)} \varphi_\varepsilon(x)d\Hh^1(x) +\int_{\Gamma_\varepsilon(t'_R)} \varphi_\varepsilon(x)d\Hh^1(x)\notag \label{onsort0} \\
 &\leq & 2\delta_\ve. \label{onsort20}
\end{eqnarray}
Recall that for \eqref{onsort0} we have used again $P(\varphi_\varepsilon)=\varphi_\varepsilon-\varphi_\varepsilon^2/2\leq \varphi_\varepsilon$.

Returning to \eqref{inequneige00U} we obtain that for all $r\leq r_0$ it holds
\begin{eqnarray}
m_\varepsilon(C_\lambda( r))  &\geq  &\frac{1}{r(h+\beta)}\int_{C_\lambda(r)}  P(\varphi_\varepsilon(x))dx -\frac\ve 2- 4\delta_\ve.\label{inequneige00}
\end{eqnarray}
Passing to the limsup in $\varepsilon\to 0$, using $\delta_\ve\to 0$, together with the facts that $\varphi_\varepsilon\to 1$ strongly in $L^1$, that $P(1)=1/2$, that $C_\lambda(r)$ is closed  and that $m_\varepsilon$  converges weakly-$*$ to $m$ we get
$$m(C_\lambda(r))\geq \limsup_{\varepsilon \to 0} m_\varepsilon(C(x, r))\geq \frac{1}{2 r(h +\beta)}\mathscr{L}^2(C_\lambda(r)).$$
Recalling that $C_\lambda(r)\subseteq B(x,r)$ we get
$$m(B(x, r))\geq \frac{1}{2r(h-\beta)} 4r^2h\lambda .$$
Finally, letting $\beta\to 0$ and $\lambda \to 1$ we get the density estimate
$$m(B(x,r))\geq 2r,$$
which leads to
$$\limsup_{r\to 0}\frac{m(B(x,r))}{2r}\geq 1,$$
and this holds for $\Hh^1$-a.e. $x\in K$. Applying   Proposition \eqref{densityLemma}, we find that
$$m\geq \Hh^1|_{K},$$
and we conclude that
\begin{eqnarray}
\Hh^1(K) \leq \liminf_{\varepsilon\to 0} \left( \frac{1}{4\varepsilon}\int_{\Omega}(1-\varphi_\varepsilon)^2 dx + \varepsilon\int_{\Omega}\modvec \nabla \varphi_\varepsilon \modvec ^2 dx \right).\notag\qedhere
\end{eqnarray}
 \end{proof}



\section{Approximation of the Steiner problem}
\label{sectionSteiner}

In this section we explain how to approximate the following Steiner problem in $\R^2$. Let $\mu$ be a probability measure on $\R^2$ with compact support; we denote by 
$$\mathcal{K}_\mu:= \{K \subset \R^2; \text{ compact, connected, and s.t. } \spt(\mu)\subset K\}.$$
We then investigate 
\begin{eqnarray}
\inf \{ \Hh^1(K)\;; K \in \mathcal{K}_\mu\}. \label{steiner}
\end{eqnarray}
Notice that here $\mu$ is only important through its support.
If the infimum in the above problem is finite, then the problem  admits a solution as a direct consequence of Blaschke and Golab's Theorem; notice that in general the minimal set is not unique. The most investigated case is the case of the so-called Steiner problem (see \cite{gilbpoll,ps2009} and the references therein), where we consider a finite number of points finite set of points $\{x_i\}=:D$ and we choose any measure $\mu$ such that $\spt(\mu)=D$, for instance $\mu=\frac{1}{\sharp D}\sum_{i} \delta_{x_i}$ .

 To approximate  \eqref{steiner} we introduce an open set $\Omega$ containing the convex hull of $D$ in its interior. This is just to avoid boundary problems. Indeed, it is easy to verify that a minimizer for \eqref{steiner} will always stay inside $\Omega$ otherwise its projection onto the convex hull would make a better competitor (such a projection is indeed a $1$-Lipschitz mapping). Therefore \eqref{steiner} is equivalent to the problem
 \begin{eqnarray}
\min \{ \Hh^1(K)\;; K \in \mathcal{K}_\mu\text{ and } K \subset \Omega\}. \label{steiner2}
\end{eqnarray}
 Now, to approximate \eqref{steiner2} we take any arbitrary chosen point $x_0\in D$.  Then recalling the definition of $d_\varphi$ in \eqref{defDphi} we introduce the family of functionals defined on $L^2(\Omega)$ by

\begin{eqnarray}
S_{\varepsilon}(\varphi)=
 \frac{1}{4\varepsilon}\int_{\Omega}(1-\varphi)^2 dx +\; \varepsilon\int_{\Omega}\modvec \nabla \varphi \modvec ^2 dx +\frac{1}{c_\varepsilon}\int_{\Omega}d_{\varphi}(x,x_0) d\mu(x), \label{Fepsilon}
\end{eqnarray}

if $\varphi \in W^{1,2}(\Omega)\cap C^0(\overline{\Omega})$ satisfies $0\leq \varphi\leq 1$, $\varphi=1$ on $\partial \Omega $, and $S_{\varepsilon}(\varphi)=+\infty$ otherwise.

\begin{defn} \label{approximateMin}We say that $\varphi_\varepsilon$ is a quasi-minimizing sequence for $S_\varepsilon$ if 
$$S_{\varepsilon}(\varphi_\varepsilon)-\inf_{\varphi}S_\varepsilon(\varphi)\underset{\varepsilon\to 0}\longrightarrow 0.$$
\end{defn}

Our approximation result is as follows.

\begin{thm} \label{SteinerTheorem} For all $\varepsilon>0$ let  $\varphi_{\varepsilon}$ be a quasi-minimizing sequence of $S_{\varepsilon}$, where  $c_\varepsilon\to 0$. Consider the sequence of functions $d_{\varphi_{\ve_n}}$ converging uniformly to a certain function $d$. Then the set $K:=\{d=0\}$ is compact and connected and is a solution to Problem \eqref{steiner}.
\end{thm}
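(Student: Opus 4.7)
The plan is to combine a recovery-sequence upper bound with the general liminf result of Lemma \ref{mainlemma}. First I would pick an optimal set $K^* \in \mathcal{K}_\mu$ for problem \eqref{steiner2} (which exists by Blaschke and Golab, and can be assumed to lie in a compact subset of $\Omega$ by the preceding discussion). Applying Lemma \ref{limsup} to $K^*$ with parameter $k_\varepsilon = c_\varepsilon^2$ (any infinitesimal satisfying $k_\varepsilon/c_\varepsilon \to 0$ works) I obtain a function $\tilde\varphi_\varepsilon$, equal to $k_\varepsilon$ on a small tubular neighborhood of $K^*$ and to $1$ outside a slightly larger one, with
\begin{equation*}
\limsup_{\varepsilon\to 0} \left(\frac{1}{4\varepsilon}\int_\Omega (1-\tilde\varphi_\varepsilon)^2 dx + \varepsilon\int_\Omega |\nabla \tilde\varphi_\varepsilon|^2 dx\right) \leq \Hh^1(K^*).
\end{equation*}
By Proposition \ref{rectifiability}(i), for every $x \in \spt(\mu) \subset K^*$ there is a curve $\gamma_x \subset K^*$ joining $x$ to $x_0$ with $\Hh^1(\gamma_x) \leq \Hh^1(K^*)$. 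Since $\tilde\varphi_\varepsilon \equiv k_\varepsilon$ along $\gamma_x$, we get $d_{\tilde\varphi_\varepsilon}(x,x_0) \leq k_\varepsilon \Hh^1(K^*)$, and hence $\frac{1}{c_\varepsilon}\int_\Omega d_{\tilde\varphi_\varepsilon}(x,x_0)\,d\mu \leq (k_\varepsilon/c_\varepsilon)\Hh^1(K^*) \to 0$. Therefore $\limsup_\varepsilon S_\varepsilon(\tilde\varphi_\varepsilon) \leq \Hh^1(K^*)$, and by the quasi-minimizing assumption the same bound holds for $S_\varepsilon(\varphi_\varepsilon)$.

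Next I would verify the hypotheses of Lemma \ref{mainlemma} with $\mu_\varepsilon = \mu$ and $x_\varepsilon = x_0$. Assumptions (i) and (ii) are trivial. For (iii), since $0 \le \varphi_\varepsilon \le 1$ the functions $d_{\varphi_\varepsilon}(\cdot, x_0)$ are equi-$1$-Lipschitz and uniformly bounded by $\diam(\Omega)$ on $\overline\Omega$, so Arzel\`a-Ascoli (together with the uniqueness of the uniform limit along the subsequence given in the statement) supplies the required uniform convergence to a $1$-Lipschitz $d$. Hypothesis (iv) follows from the previous paragraph, since each term of $S_\varepsilon$ appears on the left hand side. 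The lemma then yields: $K := \{d = 0\}$ is compact and connected, $x_0 \in K$, $\spt(\mu) \subset K$, and
\begin{equation*}
\Hh^1(K) \leq \liminf_{\varepsilon\to 0} \left(\frac{1}{4\varepsilon}\int_\Omega (1-\varphi_\varepsilon)^2 dx + \varepsilon\int_\Omega |\nabla \varphi_\varepsilon|^2 dx\right) \leq \liminf_{\varepsilon\to 0} S_\varepsilon(\varphi_\varepsilon) \leq \Hh^1(K^*).
\end{equation*}

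Since $K$ is compact, connected, and contains $\spt(\mu)$, it belongs to $\mathcal{K}_\mu$; by minimality of $K^*$ we also have $\Hh^1(K) \geq \Hh^1(K^*)$, so equality holds throughout and $K$ solves \eqref{steiner}. The main difficulty of the proof is really packaged inside Lemma \ref{mainlemma} (rectifiability of the limit set and the density lower bound). The only genuinely new issue at this stage is the delicate balance in the recovery sequence: one must ensure the extra geodesic term $c_\varepsilon^{-1} \int d_\varphi\,d\mu$ does not blow up, which is achieved by exploiting the freedom in Lemma \ref{limsup} to take $k_\varepsilon$ as small as desired while keeping the Modica-Mortola energy convergent. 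Since $K^* \Subset \Omega$, no boundary correction \`a la Lemma \ref{limsupmodifie} is needed here.
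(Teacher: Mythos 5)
Your proof is correct and follows essentially the same route as the paper: construct a recovery sequence from Lemma \ref{limsup} for a fixed Steiner minimizer $K^*$, use quasi-minimality to transfer the resulting energy bound to $\varphi_\varepsilon$, invoke Lemma \ref{mainlemma} for the liminf inequality, and compare. The only (cosmetic) difference is that the paper takes $k_\varepsilon = 0$ in Lemma \ref{limsup}, which makes $d_{\psi_\varepsilon}(\cdot,x_0)$ vanish identically on $K^* \supset \spt(\mu)$ so the geodesic term is exactly zero for every $\varepsilon$; you instead take $k_\varepsilon = c_\varepsilon^2$ and then estimate the geodesic term as $O(k_\varepsilon/c_\varepsilon) \to 0$, which requires the extra (correct, but superfluous here) step of tracing a path inside $K^*$ via Proposition \ref{rectifiability}(i). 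Your choice is the one the authors recommend later in Section \ref{existence issues} to guarantee existence of approximate minimizers, but for the $\Gamma$-convergence-type argument of this theorem the cleaner $k_\varepsilon=0$ suffices.
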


\begin{remark}Notice that the assumption of  $d_{\varphi_\ve}$ converging to a function $d$ is not restrictive since they are all $1$-Lipschitz functions thus always converge uniformly up to a subsequence. 
\end{remark}

\begin{proof} Defining 
$$\tau_n:=\sup_{k\geq n} \modvec d_{\varphi_{\varepsilon_k}}(x,x_0) - d(x)\modvec _{\infty},$$
it is easy to see that 
$$K_n:=\{x \in \overline{\Omega} ; d_{\varphi_{\varepsilon_n}}(x,x_0)\leq \tau_n\}$$
converges, for the Hausdorff distance, to the set $K=\{d(x)=0\}$.

Now let $K_0 \in \mathcal{K}_\mu$ be a minimizer for the Steiner Problem \eqref{steiner2}   in particular $K_0\subset \Omega$ because it is contained in the convex hull of the support of $\mu$, in other words  $K_0\cap \partial \Omega = \emptyset$.

Then, let  $a_\varepsilon$ and $b_\varepsilon$ be the same parameters as in Lemma \ref{limsup}, and let $\psi_\varepsilon$ be the family of function given by Lemma \ref{limsup}, with $k_\varepsilon=0$. For $\varepsilon$ small enough we have $\psi_{\varepsilon}=1$ on $\partial \Omega$.     Since $\varphi_\varepsilon$ are quasi-minimizers of $S_\varepsilon$, it follows that  for all $n\geq 0$, it holds $S_{\varepsilon_n}(\varphi_{\varepsilon_n})\leq S_{\varepsilon_n}(\psi_{\varepsilon_n})+o
(1)$, thus taking the liminf, and noticing that $\frac{1}{c_\varepsilon}\int_{\Omega}d_{\psi_\varepsilon}(x,x_0) d\mu(x)=0$ for all $\varepsilon$ we infer that
\begin{eqnarray}
 \liminf_{n}S_{\varepsilon_n}(\varphi_{\varepsilon_n})\leq \limsup_{n}S_{\varepsilon_n}(\psi_{\varepsilon_n})\leq \Hh^1(K_0). \label{celleduhaut}
 \end{eqnarray}
In particular $S_{\varepsilon_n}(\varphi_{\varepsilon_n})\leq C$ so that Lemma \ref{mainlemma} applies, thus we obtain that $K \in \mathcal{K}_\mu$ and that
$$\Hh^1(K)\leq \liminf_{n\to +\infty} S_{\varepsilon_n}(\varphi_{\varepsilon_n}).$$
Gathering with \eqref{celleduhaut} we deduce that 
$$\Hh^1(K)\leq \Hh^1(K_0)$$
which proves that $K$ is a minimizer.
\end{proof}

\begin{remark} We refer the reader to Section \ref{existence issues} in order to check the technical tricks to guarantee existence for the approximating problems $\min S_\ve$.
\end{remark}

\section{Approximation of the average distance and compliance problems}
\label{Section-approx-average}
\subsection{The dual problem}

Our strategy is first to change the problem into a simpler
form via a duality argument. Indeed, if one writes the energy
$\int u_K f$ in terms of a minimization problem, one finds that
Problem \eqref{pb20} has a min-max form. By duality, we will
turn this problem into a min-min.
Let us denote by
$$\mathcal{K}:= \{K \subset \overline{\Omega}; \text{ closed and connected}\},$$
$$\mathcal{A}_q:=\{(K,v):K\in\mathcal{K},v \in L^q(\Omega)\;\int_{\Omega} v\cdot\nabla \psi=\int_{\Omega}\psi f\;\mbox{for all }\psi\in C^1(\Omega),\psi=0\,\mbox{on } K\},$$
(the definition of $\mathcal{A}_p$ is the same as in the introduction, with the divergence and boundary conditions expressed in a weak sense).

\begin{prop}  For $p\in]1,+\infty[$ and $q=p'=\frac{p-1}{p}$, Problem \eqref{pb20} is equivalent to 
\begin{equation}
 \inf_{(K ,v) \in\mathcal{A}_p} \left\{ \frac{1}{q}\int_{\Omega}\modvec  v\modvec ^q dx + \Lambda \Hh^1(K)\right\} . \label{pb3}
\end{equation}
\end{prop}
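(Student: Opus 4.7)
My plan is to fix $K \in \mathcal{K}$ and prove separately that
$$\frac{p-1}{p}\int_\Omega u_K f\,dx=\inf\Bigl\{\tfrac{1}{q}\int_\Omega|v|^q\,dx\,:\,v\in L^q(\Omega),\ (K,v)\in\mathcal{A}_p\Bigr\};\quad (\star)$$
once this pointwise (in $K$) identity is established, the equivalence of \eqref{pb20} and \eqref{pb3} follows by adding $\Lambda\mathcal{H}^1(K)$ and infimizing over $K\in\mathcal{K}$.

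To prove $(\star)$, the plan is to rewrite the compliance through the variational definition \eqref{prob1bis}. Testing the Euler--Lagrange equation for the minimizer $u_K$ with $u_K$ itself yields $\int_\Omega|\nabla u_K|^p\,dx=\int_\Omega u_K f\,dx$, so the minimal value in \eqref{prob1bis} equals $-\frac{p-1}{p}\int_\Omega u_K f\,dx$. Hence
$$\frac{p-1}{p}\int_\Omega u_K f\,dx=\sup_{u\in W^{1,p}_K(\Omega)}\int_\Omega\Bigl(uf-\tfrac{1}{p}|\nabla u|^p\Bigr)\,dx.$$
Next I would use the pointwise Legendre duality $\tfrac{1}{p}|\xi|^p=\sup_{w\in\R^2}\bigl(w\cdot\xi-\tfrac{1}{q}|w|^q\bigr)$, lifted to integrals via a measurable selection argument, in order to write
$$\int_\Omega\tfrac{1}{p}|\nabla u|^p\,dx=\sup_{v\in L^q(\Omega)}\int_\Omega\Bigl(v\cdot\nabla u-\tfrac{1}{q}|v|^q\Bigr)\,dx.$$
Substituting, the compliance becomes the sup--inf
$$\sup_{u\in W^{1,p}_K(\Omega)}\inf_{v\in L^q(\Omega)}\int_\Omega\Bigl(uf-v\cdot\nabla u+\tfrac{1}{q}|v|^q\Bigr)\,dx.$$

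The key step is to exchange the sup and the inf. I would apply a Fenchel--Rockafellar / min-max argument: the integrand is concave (actually affine) in $u$ and convex in $v$, the map $v\mapsto\tfrac{1}{q}\int|v|^q$ is coercive and strictly convex on $L^q(\Omega)$, and the map $u\mapsto\int(uf-\tfrac{1}{p}|\nabla u|^p)$ is concave and upper semicontinuous on $W^{1,p}_K(\Omega)$ with a (unique) maximizer $u_K$. After the swap, the inner supremum over $u\in W^{1,p}_K(\Omega)$ of the \emph{linear} functional $u\mapsto\int(uf-v\cdot\nabla u)$ is finite precisely when it vanishes identically, i.e. when
$$\int_\Omega v\cdot\nabla u\,dx=\int_\Omega uf\,dx\qquad\text{for every }u\in W^{1,p}_K(\Omega);$$
this is exactly the weak formulation encoded in $(K,v)\in\mathcal{A}_p$ (since $W^{1,p}_K(\Omega)$ is the closure of $C^\infty_c(\R^2\setminus K)$, it contains the test functions $\psi\in C^1(\overline{\Omega})$ with $\psi=0$ on $K$). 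When the constraint is violated the inner sup equals $+\infty$ and contributes nothing to the outer inf. This identifies the outer infimum as the right-hand side of $(\star)$.

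The only delicate point is really the sup--inf exchange: I would either invoke Sion's minimax theorem (after observing that the sublevel sets of $v\mapsto\tfrac{1}{q}\|v\|_q^q$ are weakly compact in the reflexive space $L^q(\Omega)$ and the integrand is weakly lower semicontinuous in $v$) or, equivalently, use Fenchel--Rockafellar duality applied directly to the convex functional $u\mapsto\tfrac{1}{p}\int|\nabla u|^p\,dx-\int uf\,dx$ on $W^{1,p}_K(\Omega)$, whose Fenchel conjugate, computed on the dual variable $v\in L^q(\Omega)$, gives exactly $\tfrac{1}{q}\int|v|^q\,dx$ restricted by the divergence constraint; the absence of a duality gap follows from standard qualification conditions since the primal problem is coercive and has an interior feasible point (the zero function). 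Once the equality of the two problems is proved for each fixed $K$, infimizing in $K\in\mathcal{K}$ yields \eqref{pb3} as claimed.
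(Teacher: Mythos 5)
Your proposal is correct and is essentially the paper's argument: both reduce the claim, for each fixed $K$, to Fenchel duality for the convex functional $u\mapsto\frac{1}{p}\int_\Omega|\nabla u|^p\,dx-\int_\Omega uf\,dx$ on $W^{1,p}_K(\Omega)$, the paper's perturbation $\Phi(\eta)$ together with $\Phi^{**}(0)=\Phi(0)$ being exactly the Fenchel--Rockafellar route you cite as an alternative to the minimax swap. One small remark: the Sion-based justification of the sup--inf exchange is delicate here (neither domain is compact, and bounding $v$ uniformly in $u$ needs an argument), whereas the absence of a duality gap is immediate once you notice that $v=|\nabla u_K|^{p-2}\nabla u_K$ is admissible and, by testing the Euler--Lagrange equation with $u_K$, attains the value $\frac{1}{q}\int_\Omega|v|^q\,dx=\frac{p-1}{p}\int_\Omega u_Kf\,dx$.
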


\begin{proof}

Let $K \in \mathcal{K}$ and $u_K$ be a minimizer of \eqref{prob1bis}. Then the optimality condition yields
$$\frac{1-p}{p}\int_{\Omega}u_K f=\min_{u \in W^{1,p}_{K}(\Omega)}\int_{\Omega}\left(\frac{1}{p}\modvec \nabla u\modvec ^p -u f\right)dx.$$
Let $q$ be the conjugate exponent to $p$. We need to prove 
\begin{multline}\label{duality pq}
\min_{u \in W^{1,p}_{K}(\Omega\setminus K)}\int_{\Omega}\left(\frac{1}{p}\modvec \nabla u\modvec ^p -u f\right)dx\\
=\sup\left\{
 - \frac{1}{q}\int_{\Omega}|v|^q\ dx,
 \quad v\in L^q(\Omega,\R^2)\,:\,(K,v)\in\mathcal{A}_q\right\}.\end{multline}

This is quite classical, but we provide a proof from convex analysis  (the following approach   is inspired by the proof of Theorem 2 in \cite{chamb}). For  $\eta\in L^p(\Omega, \R^2)$, we set
\[
\Phi(\eta) := \inf_{u\in\uspace}\int_{\Omega} \left( \frac{1}{p}|\nabla u + \eta|^p - uf \right)dx.
\]
This functional $\Phi$ is convex in $\eta$, since it is obtained as the infimum over $u$ of a functional which is jointly convex in $(u,\eta)$. The argument from convex analysis that we use is the following : given a reflexive Banach space $E$, if $\Phi : E\rightarrow \overline{\R}$ is a convex function, lower semi-continuous, never taking the value $-\infty$, then $\Phi^{**}=\Phi$, where $\Phi^*$ denotes the convex conjugate (see for instance \cite{ekeland1976convex}). It is easy to check that our function $\Phi$  satisfies these extra conditions (l.s.c. and $\Phi>-\infty$).

We denote by $v\in L^q(\Omega,\R^2)$ the dual variable associated to $\eta$. Let us compute $\Phi^*(v)$ :
\begin{align*}
\Phi^*(v) & = \sup_{\eta\in L^p} \left[ \int_{\Omega}v \cdot \eta\ dx - \Phi(\eta)\right]\\
& = \sup_{\eta\in L^p} \left[ \int_{\Omega}v \cdot \eta\ dx - 
\inf_{u\in\uspace}
\left\lbrace
\int_{\Omega} \frac{1}{p}|\nabla u + \eta|^p dx - \int_{\Omega} fu \ dx
\right\rbrace
\right]\\
& = \sup_{\eta\in L^p} \left[ \int_{\Omega}v \cdot \eta\ dx + 
\sup_{u\in\uspace}
\left\lbrace
- \int_{\Omega} \frac{1}{p}|\nabla u + \eta|^p dx + \int_{\Omega} fu \ dx
\right\rbrace
\right]\\
& = \sup_{\eta\in L^p,\ u\in\uspace}
\left[
\int_{\Omega}v \cdot \eta\ dx
- \int_{\Omega} \frac{1}{p}|\nabla u + \eta|^p \ dx + \int_{\Omega} fu \ dx
\right]\\
& = \sup_{\eta\in L^p,\ u\in\uspace}
\left[
\int_{\Omega}\left\lbrace 
v \cdot (\nabla u + \eta)
- \frac{1}{p}|\nabla u + \eta|^p\right\rbrace \ dx 
+ \int_{\Omega} fu dx
- \int_{\Omega}v\cdot\nabla u\ dx
\right]
\end{align*}
Then we use the relation
\[
\sup_{\eta\in L^p}
\int_{\Omega}\left\lbrace 
v \cdot (\nabla u + \eta)
- \frac{1}{p}|\nabla u + \eta|^p\right\rbrace dx = \frac{1}{q}\int_{\Omega}|v|^q
\]
(the equality is achieved for $\eta=|v|^{q-2}v-\nabla u$), which yields :
\[
\Phi^*(v) = \frac{1}{q}\int_{\Omega}|v|^q\ dx + \sup_{u\in\uspace} \int_{\Omega} (fu
- v\cdot\nabla u)\ dx
\]
We then introduce the condition
\begin{equation}\label{condition}
\int_{\Omega} (fu -v\cdot\nabla u)\ dx = 0\quad \forall u\in\uspace
\end{equation}
(which says that $\mathrm{div}\ v=-f$   in $\Omega\setminus K$ and $v\cdot\nu =0$ on $\partial \Omega$, in a weak sense).
Since the above expression is linear in  $u$, at $v$ fixed, we see that the supremum in the above expression for $\Phi^*$ is either $0$ when $v$ satisfies  \eqref{condition}, or $+\infty$. This way,
\[
\Phi^*(v) = \left\lbrace  \begin{matrix}
\frac{1}{q}\int_{\Omega}|v|^q & \text{if}\ v\ \text{satisfies}\ \eqref{condition},\\
+\infty & \text{otherwise}.
\end{matrix}\right.
\] 
We take again the conjugate: for all $\eta\in L^p(\Omega,\R^2)$,
\begin{align*}
\Phi^{**}(\eta) & := \sup \left\lbrace
\int_{\Omega}v\cdot \eta\ dx - \Phi^*(v),
\quad v\in L^q(\Omega,\R^2)
\right\rbrace\\
& = \sup
\left\lbrace
\int_{\Omega}v\cdot \eta\ dx - \frac{1}{q}\int_{\Omega}|v|^q\ dx,
\quad v\in L^q(\Omega,\R^2),\ v\ \text{satisfies}\ \eqref{condition}
\right\rbrace
\end{align*}
We conclude by writing that $\Phi(0) = \Phi^{**}(0)$, in other words
\begin{multline*}
\inf\left\lbrace
\int_{\Omega} \left( \frac{1}{p}|\nabla u|^p - fu \right)dx,
\quad
u\in\uspace
\right\rbrace
=\\
\sup
\left\lbrace
 - \frac{1}{q}\int_{\Omega}|v|^q\ dx,
 \quad v\in L^q(\Omega,\R^2),\ v\ \text{satisfies}\ \eqref{condition} 
\right\rbrace.\qedhere
\end{multline*}
\end{proof}

\begin{remark}\label{sign div q>1}
From the previous proof, one notices that the optimal vector field $v$ satisfies $\dv v=\Delta_q u$, where $u$ is the solution of $-\Delta_q u=f$ with $u=0$ on $K$. The function $u$ is non-negative outside $K$, and hence its normal derivatives on $K$ are positives, and this gives the sign of $\dv v$ on $K$, which is a singular measure depending on these normal derivatives. 
\end{remark}

We also have a similar statement for the average distance problem.
Let us set 
$$\mathcal{A}_\infty:=\left\{(K,v):K\in\mathcal{K},\,v \in \mathcal M^d(\overline\Omega)   :\;\int_{\Omega}\! \nabla \psi\cdot dv=\!\int_{\Omega}\psi f\;\;\forall \psi\in C^1(\Omega),\psi=0\,\mbox{on } K\right\}.$$

Here $ \mathcal M^d(\overline\Omega) $ is the set of finite ($d-$dimensional) vector measures on $\overline\Omega$, endowed with the norm $||v||_{\mathcal M}:=|v|(\overline\Omega)=\sup\{\int \psi\cdot dv\,:\,\psi\in C^0(\overline\Omega;\R^d), |\psi|\leq 1\}$. Analogusly, we define $\mathcal M(\overline\Omega)$ as the set of finite signed measures on $\overline\Omega$.

\begin{prop}  Problem \eqref{prob1} is equivalent to 
\begin{equation}
 \inf_{(K ,v) \in \mathcal{A}_\infty} \left\{ \int_{\Omega}\modvec  v\modvec  dx + \Lambda \Hh^1(K)\right\} . \label{pb3.3}
\end{equation}
\end{prop}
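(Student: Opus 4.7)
The plan is to establish, for each fixed $K\in\mathcal{K}$, the Beckmann-type identity
$$\int_{\Omega} dist(x,K)\,f(x)\,dx = \min\bigl\{|v|(\overline\Omega) : (K,v)\in\mathcal{A}_\infty\bigr\},$$
which is a version of the classical duality between $W_1$-transport costs and divergence-constrained vector measures. Once this identity holds, adding $\Lambda\Hh^1(K)$ to both sides and taking $\inf_{K\in\mathcal{K}}$ yields immediately the equivalence of \eqref{prob1} and \eqref{pb3.3} (a minimizer of one produces a minimizer of the other).

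For the inequality ``$\geq$'' (existence of a good competitor), I would use a Beckmann-type construction. Invoking a measurable selection theorem for the multifunction $x\mapsto\{y\in K : |x-y|=dist(x,K)\}$, I fix a Borel map $T:\Omega\to K$ with $|x-T(x)|=dist(x,K)$. Then I define $v\in\mathcal{M}^d(\overline\Omega)$ by its action on $\phi\in C(\overline\Omega;\R^2)$ via
$$\langle v,\phi\rangle := \int_\Omega f(x)\int_0^1\phi\bigl(T(x)+t(x-T(x))\bigr)\cdot(x-T(x))\,dt\,dx.$$
The trivial estimate $|\langle v,\phi\rangle|\le\|\phi\|_\infty\int dist(x,K)f(x)dx$ shows $v$ is a finite vector measure with $|v|(\overline\Omega)\le\int dist(x,K)f(x)dx$. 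For any $\psi\in C^1(\overline\Omega)$ with $\psi|_K=0$, the fundamental theorem of calculus along the segment $[T(x),x]$ gives
$$\int \nabla\psi\cdot dv = \int_\Omega f(x)\bigl[\psi(x)-\psi(T(x))\bigr]\,dx = \int_\Omega\psi f\,dx,$$
so $(K,v)\in\mathcal{A}_\infty$, proving the inequality.

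For the inequality ``$\leq$'' (lower bound on every competitor), I would test the weak constraint against a smooth regularization of the Lipschitz function $u(x):=dist(x,K)$. Define $u_{\delta,\eta}:=\bigl(\max(u-\delta,0)\bigr)*\rho_\eta$, where $\rho_\eta$ is a standard mollifier and $\eta<\delta/2$. Then $u_{\delta,\eta}\in C^\infty(\overline\Omega)$, it vanishes on a neighborhood of $K$ (hence is an admissible test function), satisfies $|\nabla u_{\delta,\eta}|\le 1$, and converges uniformly to $u$ as $\delta,\eta\to 0$. Applying the defining identity of $\mathcal{A}_\infty$:
$$\int u_{\delta,\eta}f\,dx = \int \nabla u_{\delta,\eta}\cdot dv \le \int|\nabla u_{\delta,\eta}|\,d|v| \le |v|(\overline\Omega),$$
and passing to the limit (LHS goes to $\int dist(x,K)f(x)dx$ by uniform convergence and $f\in L^1$) produces the desired bound.

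The main obstacle is the rigorous verification in Step~1: proving that the formula for $\langle v,\phi\rangle$ genuinely defines a finite Borel vector measure on $\overline\Omega$ (this is the content of a Riesz representation argument applied to the bounded linear functional on $C(\overline\Omega;\R^2)$), and checking the divergence identity through a Fubini argument on the parametrized segments $[T(x),x]$. The measurability of the projection $T$ and the fact that $\operatorname{supp}(v)$ lies in a bounded set (contained in the union of these segments, which sit inside a bounded enlargement of $\overline\Omega$) are the technical points; everything else is routine. A parallel remark analogous to Remark~\ref{sign div q>1} can be made here, namely that the optimal $v$ is supported on the transport rays from $f$ to $K$.
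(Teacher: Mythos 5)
Your proof is correct, and it takes a somewhat different and more self-contained route than the paper's. The paper combines two known facts as black boxes: first, that $\int_\Omega dist(x,K)f\,dx = \min\{W_1(f,\nu) : \spt\nu\subset K\}$, with the nearest-point pushforward $(\pi_K)_\# f$ as minimizer; second, Beckmann's flow-minimization formula for $W_1$ (cited from \cite{SanSMAI}). You instead re-derive exactly the special case of Beckmann duality that is needed: for the ``$\ge$'' direction you explicitly build the transport flow $v$ by integrating $f$ along the projection rays $[T(x),x]$ and estimate its total variation by $\int dist(x,K)f$; for the ``$\le$'' direction you test the weak divergence constraint against a mollified version of $dist(\cdot,K)$, using $|\nabla u_{\delta,\eta}|\le 1$. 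This is more elementary and does not require the reader to know the Beckmann/Kantorovich machinery, at the cost of having to deal with the measurable selection and the Riesz/Fubini bookkeeping yourself. The two approaches are of comparable rigor; the paper's is shorter, yours is closer to a from-scratch argument. One common caveat that neither proof addresses explicitly: for the constructed $v$ to lie in $\M^d(\overline\Omega)$ (as required by $\mathcal{A}_\infty$), the segments $[T(x),x]$ must stay inside $\overline\Omega$, which requires $\Omega$ convex (or at least that $\overline\Omega$ be geodesic for the Euclidean metric). You note that $\spt(v)$ sits in a ``bounded enlargement of $\overline\Omega$''; strictly speaking this is not enough, and either convexity should be assumed, or the rays replaced by geodesics in $\overline\Omega$. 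Since the paper's own citation of Beckmann duality has the same hidden hypothesis, this is not a defect of your argument in particular, but it is worth flagging if you want to present your version as a complete proof.
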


\begin{proof} The proof for the case $p=\infty$ and the average distance problem could be obtained by adapting the previous proof for the compliance case, but would require some attention due to the fact that the spaces are non-reflexive. Hence, for the reader knowing some optimal transport techniques, we give a different approach.

First notice that 

$$\int_\Omega dist(x,K)f(x)dx=\min\{ W_1(f,\nu)\,:\,\spt(\nu)\subset K\},$$
where $W_1$ is the Wasserstein distance between probability measures, and we identify $f$ with a probability having $f$ as a density. It is easy to see that the optimal measure $\nu$ in the minimum above is given by $(\pi_K)_\#f$, where $\pi_K$ is the projection onto the set $K$ (well-defined a.e. and measurable). 

Next, we use Beckmann's interpretation of the distance $W_1$ (see for instance \cite{SanSMAI}), which gives
$$W_1(f,\nu)=\inf\{ ||v||_{\mathcal M}\;:\;-\dv v=f-\nu\},$$
where the divergence condition is to be intended in the sense 
$$\int \nabla\phi\cdot dv=\int \phi\,d(f-\nu),\quad\mbox{ for all }\phi\in C^1(\Omega),$$
(without compact support or boundary conditions on $\phi$, which means that $v$ also satisfies $v\cdot n_\Omega=0$). Hence we have
\begin{eqnarray*}
\int_\Omega dist(x,K)f(x)dx&=&\min_{\spt(\nu)\subset K}\min_{\dv v=f-\nu}||v||_{\mathcal M}\\
&=&\min\{||v||_{\mathcal M}\,:\,\spt(\dv v-f)\subset K\}.\qedhere
\end{eqnarray*}
\end{proof} 
\begin{remark}\label{sign div q=1}
From the previous proof, one notices that the optimal vector field $v$ satisfies $-\dv v=f-\nu$, where $\nu$ is a positive measure of the same mass as $f\geq 0$. In particular this gives the sign of the singular part of $\dv v$ and proves $|\dv v|(\Omega)=2\int_\Omega f(x)dx$.
\end{remark}
In the sequel we will assume that  $\Lambda=1$ to lighten the notation. Now we define $\mathcal{M}^{(q)}(\Omega)$ the space of fields $v \in L^q(\Omega,\R^2)$ whose divergence in the sense of distribution ${\rm div}\;v$ is a measure (for $q=1$, this becomes the space of vector measures such that the divergence is also a measure). For any signed measure $\mu$ on $\Omega$, we define the length of the ``connected envelope"  of its support by
$$CE(\mu):=St(\spt(\mu)),$$
where $St(A)$ is the length of the solution of the Steiner problem associated to the set $A$, namely, 
$$St(A):=\inf\{ \Hh^1(K) \;;\; K \subset \overline{\Omega}\text{ is closed, connected, and }A \subseteq K\}.$$
Then, for given $v\in \mathcal{M}^{(q)}(\Omega)$, we define  
$$\Hh^1_C(v):=CE(\dv v + f).$$
Notice that $\Hh^1_C(v)$ depends also on $f$ but we don't make it explicit.

\begin{remark} \label{thegoodremark}   If the infimum in the definition of $St(A)$ is finite, then it is actually a minimum. Indeed, let $K_n$ be a minimizing sequence such that $\Hh^1(K_n)<+\infty$ for $n$ large enough, by Golab's Theorem, up to extracting a subsequence we can assume that $K_n\to K_0$ for some closed and connected set $K_0$, and $\Hh^1(K_0)\leq \liminf\Hh^1(K_n)=St(A)$.
On the other hand,  the condition $A\subseteq K_n$, with $K_n$ converging to  $K_0$ in the Hausdorff topology, implies $A\subseteq K_0$. But then $K_0$ is admissible in the definition of $St(A)$ which implies that it is a minimizer.
\end{remark}

Next we define the functional that will arise as $\Gamma$-limit of our approximating functionals. If the following conditions are satisfied 
\begin{enumerate}
\item $v \in \mathcal{M}^d(\overline{\Omega})$ (for Problem \eqref{prob1}) or $v \in L^q(\Omega)$ (for Problem \eqref{pb20})
\item $\dv v \in \mathcal{M}(\overline{\Omega})$ 
\item  $y \in \spt(\dv v +f)$
\item $\spt(\dv v +f) \subset \overline{\Omega}$  
\item $\varphi =1$ a.e. on $ \Omega$ 
\end{enumerate}
then we set, for $q\geq 1$,
$$F_0(v,\varphi,y)=
\frac{1}{q}\int_{\Omega} \modvec v\modvec ^q dx + |\dv v|(\Omega)+  \Hh^1_C(v)$$
and $F_0(v,\varphi,y)=+\infty$ otherwise.

The next proposition says that the problem \eqref{prob1bis} is equivalent to the one of minimizing $F_0$.

\begin{prop} \label{propositionFilippo} If $f\geq 0$, $F_0$ has a minimizer, and finding it is equivalent to solving \eqref{pb3} or \eqref{pb3.3}.
\end{prop}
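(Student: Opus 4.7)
The plan is to show that $F_0$ decomposes as $G+|\dv v|(\Omega)$, where $G$ denotes the objective of \eqref{pb3} or \eqref{pb3.3}, and that the extra term $|\dv v|(\Omega)$ is bounded below by $2\|f\|_{L^1}$ with equality attained precisely by the canonical $v$ arising from the $p$-Laplace or Beckmann formulation. This will yield $\min F_0 = \min G + 2\|f\|_{L^1}$ and a one-to-one correspondence between optimal $v$'s.

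First, I would eliminate the variables $\varphi$ and $y$: conditions (3) and (5) force $\varphi\equiv 1$ a.e. and any $y\in\spt(\dv v+f)$ (nonempty whenever $f\not\equiv 0$), so $F_0$ depends effectively only on $v$. Then, by Remark~\ref{thegoodremark}, if $\Hh^1_C(v)<\infty$ the Steiner infimum defining it is attained by a compact connected set $K_0\supset\spt(\dv v+f)$ with $\Hh^1(K_0)=\Hh^1_C(v)$, and automatically $(K_0,v)\in\mathcal A_p$ (resp. $\mathcal A_\infty$). Conversely, if $(K,v)\in\mathcal A_p$, the weak divergence condition gives $\spt(\dv v+f)\subset K$, whence $\Hh^1_C(v)\le\Hh^1(K)$. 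This proves that
$$\inf_v\left(\frac1q\int_\Omega|v|^q\,dx+\Hh^1_C(v)\right)=\inf_{(K,v)\in\mathcal A_p}\left(\frac1q\int_\Omega|v|^q\,dx+\Hh^1(K)\right).$$

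Next, to lower-bound $|\dv v|(\Omega)$, I would interpret $\dv v\in\mathcal M(\overline\Omega)$ in the natural distributional sense on $\overline\Omega$, which incorporates the Neumann condition $v\cdot n_\Omega=0$ on $\partial\Omega\setminus K$. Testing this divergence against $\phi\equiv 1$ gives $(\dv v)(\overline\Omega)=0$. Decomposing $\dv v$ as $-f\,dx$ on $\Omega\setminus K$ plus a singular part $\sigma$ carried by $K\cup(\partial\Omega\cap K)$, and using $f\ge 0$, I would deduce
$$|\dv v|(\Omega)\ge \int_{\Omega\setminus K}f\,dx+|\sigma(\Omega\cap K)|\ge 2\|f\|_{L^1},$$
at least when $K\cap\partial\Omega=\emptyset$, with equality if and only if $\sigma\ge 0$. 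Remarks~\ref{sign div q>1} and~\ref{sign div q=1} ensure that the canonical $v^\ast$ from the $p$-Laplace (or Beckmann) optimum satisfies $\sigma^\ast\ge 0$, and thus $|\dv v^\ast|(\Omega)=2\|f\|_{L^1}$.

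Combining these ingredients, $F_0(v)\ge \min G+2\|f\|_{L^1}$ for every admissible $v$, with equality reached by $v^\ast$; conversely any $F_0$-minimizer must saturate all inequalities, and so produces a dual minimizer. Existence of $(K^\ast,v^\ast)$ follows by a classical direct method on the dual problem: weak (or weak-$*$) compactness of $v_n$ in $L^q$ (or $\mathcal M^d$), Blaschke-type compactness of $K_n$ in the Hausdorff topology, weak lower semicontinuity of the $L^q$ norm and of the measure norm, plus Golab's theorem for the length. The main technical obstacle will be the rigorous treatment of the boundary when $K\cap\partial\Omega\ne\emptyset$, since it may create a boundary contribution to $\dv v$ weakening the bound $|\dv v|(\Omega)\ge 2\|f\|_{L^1}$; I would circumvent this by enlarging $\Omega$ into a slightly larger $\Omega'$ containing all competitors in its interior and extending $v$ by zero, which eliminates any singular mass of $\dv v$ on $\partial\Omega$ and restores the sharp bound.
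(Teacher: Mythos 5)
Your proof follows essentially the same route as the paper's: eliminate $\varphi$ and $y$, use Remark~\ref{thegoodremark} to identify $\Hh^1_C(v)$ with $\Hh^1(K_0)$ for an optimal $K_0$, lower-bound $|\dv v|(\Omega)$ by $2\int_\Omega f$ using $\dv v=-f$ on $\Omega\setminus K$ and zero total mass, invoke Remarks~\ref{sign div q>1} and~\ref{sign div q=1} to see the sign of the singular part, and get existence by the direct method. (You correctly write $\sigma\ge 0$ for equality, whereas the paper's text contains a sign typo.)

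The one place you deviate is your worry about the boundary when $K\cap\partial\Omega\ne\emptyset$, and the proposed workaround of enlarging $\Omega$. This concern is actually unfounded: $\dv v$ is here a measure on $\overline\Omega$ defined by duality with all $\phi\in C^1(\overline\Omega)$ via $\int_{\overline\Omega}\phi\,d(\dv v)=-\int_\Omega v\cdot\nabla\phi$, and testing against $\phi\equiv 1$ gives $\dv v(\overline\Omega)=0$ unconditionally, whether or not $K$ touches $\partial\Omega$. All the singular mass of $\dv v$ (including any on $K\cap\partial\Omega$) is absorbed into $|\dv v|(K)$, and since $K$ is Lebesgue-negligible the bound $|\dv v|(\overline\Omega)\geq 2\int_\Omega f$ persists. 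So the enlargement trick is not needed, and in fact extending $v$ by zero across $\partial\Omega$ would change the problem (it would impose a homogeneous Neumann condition on all of $\partial\Omega$, which is not what the admissibility class requires on $\partial\Omega\cap K$). Aside from this superfluous step, the argument is correct and matches the paper's.
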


\begin{proof} 
To prove the existence of a minimizer for $F_0$ take a minimizing sequence $v_n$, and consider for each $n$ a compact and connected set $K_n$ such that $\Hh^1_C(v_n)=\Hh^1(K_n)$ and $\spt(\dv v+f)\subset K_n$. Up to a subsequence, we can assume $v_n\deb \bar v$ in $L^q(\Omega)$ (in $\M(\Omega)$ if $q=1$), $\dv v_n\destar \dv \bar v$ and $K_n\to K$ in the Hausdorff topology. Then we get $\spt( \dv \bar v+f)\subset K$, $\Hh^1_C(\bar v)\leq \Hh^1(K)\leq \liminf_n  \Hh^1(K_n)=\liminf_n \Hh^1_C(v_n)$. The semicontinuity of the other terms is immediate, and $\bar v$ is a minimizer. 

Then we want to prove that the minimizers of $F_0$ also minimize a simpler functional, which is given by
$$\tilde F_0(v):=\frac{1}{q}\int_{\Omega} \modvec v\modvec ^q dx +  \Hh^1_C(v)+2\int_\Omega f.$$
Indeed, the Neumann condition on the competitors $v$ implies $\int \dv v=0$, but $\dv v=-f$ on $\Omega\setminus K$. Then the mass of $\dv v$ is at least twice the integral of $f$ on $\Omega\setminus K$ which equals $\int_\Omega f$ since $K$ is Lebesgue-negligible. This shows that $F_0(v)\geq \tilde F_0(v)$ for every $v$. On the other hand, for any $v$ such that $\dv v\leq 0 $ on $K$ we have the  equality $F_0(v)= \tilde F_0(v)$  since the mass outside $K$ is exactly equal to that of the singular part on $K$. This is the case for any minimizer $\hat v$ of $\tilde F_0$ (see the Remarks \ref{sign div q>1} and \ref{sign div q>1}) and proves the equality of the two minimal value and the fact that the minimizers are the same.

Finally, the minimization of $F_1$ is obviously equivalent to that of $\frac{1}{q}\int_{\Omega} \modvec v\modvec ^q dx +  \Hh^1_C(v)$, since $2\int_\Omega f$ is a constant. This last problem is indeed a minimization in the pair $(K,v)$ with $\spt(\dv v+f)\subset K$, which is the same as minimizing in $\mathcal{A}_\infty$ and gives the problem \eqref{pb3}.
\end{proof}

We are now ready to  define the  family of functionals that will converge to $F_0$. We work on   $L^q(\Omega)\times L^2(\Omega) \times  \overline{\Omega}$. If the following conditions are satisfied
\begin{enumerate}
\item $v \in L^q(\Omega, \R^2)$ and $\dv v $ is a finite measure.
\item  $\varphi \in H^1(\Omega)\cap C^0(\Omega)$ 
\item $0\leq \varphi\leq 1$ 
\item $\varphi =1$ on $\partial \Omega$
\end{enumerate}
then
$$
F_{\varepsilon}(v,\varphi,y)=
\frac{1}{q}\int_{\Omega} \modvec v\modvec ^q dx + \frac{1}{4\varepsilon}\int_{\Omega}(1-\varphi)^2 dx + \varepsilon\int_{\Omega}\modvec \nabla \varphi \modvec ^2 dx $$
$$\hspace{1cm} +\frac{1}{\sqrt{\varepsilon}}\int_{\Omega}d_{\varphi}(x,y) d|\dv v +f|(x) + |\dv v|(\Omega) .
$$

Otherwise we set  $F_{\varepsilon}(v,\varphi,y)=+\infty$. 


The rest of the paper is devoted to the proof of the following.

\begin{thm}  \label{Gammaconv}The family of functionals $F_{\varepsilon}$ $\Gamma$-converges to $F_0$ in the strong topology of $L^q\times L^2\times U$.
\end{thm}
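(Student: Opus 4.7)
The plan is to establish separately the $\Gamma$-liminf and $\Gamma$-limsup inequalities, leveraging Lemma \ref{mainlemma} for the former and Lemmas \ref{limsup}, \ref{limsupmodifie} for the latter.

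For the $\Gamma$-liminf, I would take a sequence $(v_\ve, \varphi_\ve, y_\ve) \to (v, \varphi, y)$ in the required strong topology, assume $\liminf F_\ve < +\infty$, pass to a subsequence attaining the liminf, and then extract further subsequences so that: the bound $\int(1-\varphi_\ve)^2 \leq 4\ve\,C$ forces $\varphi = 1$ a.e.; by Arzel\`a--Ascoli, the $1$-Lipschitz maps $d_{\varphi_\ve}(\cdot, y_\ve)$ converge uniformly to some $1$-Lipschitz function $d$ on $\overline\Omega$; and, thanks to $|\dv v_\ve|(\Omega) \leq C$, the measures $\dv v_\ve$ converge weakly-$*$ to $\dv v$ (which is therefore a measure) and $|\dv v_\ve + f| \destar \mu$ for some nonnegative $\mu \geq |\dv v + f|$. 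Lemma \ref{mainlemma} with $x_\ve := y_\ve$, $\mu_\ve := |\dv v_\ve + f|$, $c_\ve := \sqrt{\ve}$ then provides a compact connected set $K = \{d = 0\}$ containing $y$ and $\spt(\mu) \supset \spt(\dv v + f)$, with $\Hh^1(K)$ bounded by the liminf of the Modica--Mortola part. Since $K$ is admissible for the Steiner envelope of $\spt(\dv v + f)$, one has $\Hh^1_C(v) \leq \Hh^1(K)$. Combining with the lower semicontinuities of $v \mapsto \tfrac{1}{q}\int |v|^q$ and $v \mapsto |\dv v|(\Omega)$, and using that the $1/\sqrt{\ve}$ geodesic term of $F_\ve$ is nonnegative, the desired inequality $F_0 \leq \liminf F_\ve$ follows.

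For the $\Gamma$-limsup, a recovery sequence is produced by setting $v_\ve := v$ and $y_\ve := y$ for all $\ve$. For $\varphi_\ve$, I would pick a Steiner minimizer $K$ of $\spt(\dv v + f)$, which exists by Remark \ref{thegoodremark} and satisfies $\Hh^1(K) = \Hh^1_C(v)$, and apply Lemma \ref{limsupmodifie} to this $K$ with $k_\ve := \ve$; this yields $\varphi_\ve$ equal to $1$ on $\partial\Omega$, equal to $k_\ve$ in a neighborhood of the shrunk set $(1+\delta_\ve)^{-1} K$, with the Modica--Mortola part converging to $\Hh^1(K) = \Hh^1_C(v)$ and $\varphi_\ve \to 1$ in $L^2$. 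All $v$-dependent contributions to $F_\ve$ are constant in $\ve$ and equal to their $F_0$ targets, so the only nontrivial check is $\frac{1}{\sqrt{\ve}} \int d_{\varphi_\ve}(x, y)\, d|\dv v + f|(x) \to 0$. For each $x \in \spt(\dv v + f) \subset K$ I would construct a test path from $x$ to $y$ consisting of a radial segment from $x$ to $(1+\delta_\ve)^{-1} x$, a path inside the shrunk set $(1+\delta_\ve)^{-1} K$ joining $(1+\delta_\ve)^{-1} x$ to $(1+\delta_\ve)^{-1} y$ (possible by arcwise connectedness from Proposition \ref{rectifiability}, with $\varphi_\ve = k_\ve$ along it), and a radial segment back to $y$; the resulting weighted length is $O(\delta_\ve) + k_\ve \Hh^1(K)$, and since $\delta_\ve = O(\ve |\ln \ve|)$ and $k_\ve = \ve$, division by $\sqrt{\ve}$ gives $o(1)$ uniformly in $x$, so integration against the finite measure $|\dv v + f|$ yields $o(1)$.

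The main obstacle is clearly the $\Gamma$-liminf: essentially all of the technical tools assembled earlier --- the tangent-line analysis of Proposition \ref{rectifiability}, the projection quantity $I_\lambda$, the one-dimensional total variation inequality of Lemma \ref{variation}, and the covering Lemma \ref{covering1} --- are exactly what is consumed by Lemma \ref{mainlemma} to pass from the Modica--Mortola coercivity combined with the weighted-geodesic vanishing to a lower bound of $\Hh^1$-type on the connected set $K = \{d = 0\}$. A secondary but important point is the specific choice of rate $1/\sqrt{\ve}$ in front of the geodesic term: the cost of contracting the Steiner set inside $\Omega$ in the $\Gamma$-limsup construction is of order $\ve |\ln \ve|$, so any coefficient $o((\ve |\ln \ve|)^{-1})$ would suffice, and $1/\sqrt{\ve}$ is only chosen for convenience.
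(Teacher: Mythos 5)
Your proposal is correct and follows essentially the same route as the paper: the $\Gamma$-liminf is exactly the application of Lemma \ref{mainlemma} with $\mu_\ve := |\dv v_\ve + f|$, $c_\ve := \sqrt\ve$, $x_\ve := y_\ve$, combined with lower semicontinuity of the $L^q$-norm and of $|\dv v|$, and the $\Gamma$-limsup uses the constant recovery $v_\ve = v$, $y_\ve = y_0$, the scaled profile from Lemmas \ref{limsup}--\ref{limsupmodifie} on a Steiner minimizer $K_0$ of $\spt(\dv v + f)$, and the estimate $d_{\varphi_\ve}(\cdot,y_0)\lesssim \delta_\ve$ on $K_0$ to kill the $\ve^{-1/2}$ term. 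The only cosmetic differences are your choice $k_\ve=\ve$ instead of the paper's $k_\ve=o(\ve)$ (both suffice since what matters is $k_\ve\Hh^1(K_0)=o(\sqrt\ve)$) and your making explicit the lower-semicontinuity step $\mu\geq|\dv v+f|$ that the paper leaves implicit when deducing $\spt(\dv v + f)\subset K$ from Lemma \ref{mainlemma}.
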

As usual we split the $\Gamma$-convergence in two parts, corresponding to the $\Gamma$-liminf inequality and $\Gamma$-limsup inequality.


\subsection{Proof of $\Gamma$-liminf}

\begin{thm} \label{gammaliminf}  Assume that $\Omega$ is   any open and bounded subset of  $\R^2$. 
Let $(v_\varepsilon, \varphi_\epsilon,y_\varepsilon)$ be a sequence converging weakly to some $(v,\varphi, y_0)$ in  $L^q(\Omega)\times L^2(\Omega) \times \overline{\Omega}$. Then $$F_0(v,\varphi,y_0)\leq \liminf_{\varepsilon\to 0} F_\varepsilon(v_\varepsilon, \varphi_\epsilon,y_\varepsilon).$$
\end{thm}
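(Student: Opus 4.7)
The plan is to assume, without loss of generality, that $\ell:=\liminf_\ve F_\ve(v_\ve,\varphi_\ve,y_\ve)<+\infty$ and extract a subsequence (still indexed by $\ve$) realizing this liminf as a limit, along which all the quantities entering $F_\ve$ are bounded. From $F_\ve<\infty$ we already know that each $\varphi_\ve$ lies in $H^1(\Omega)\cap C(\overline\Omega)$, takes values in $[0,1]$, and equals $1$ on $\partial\Omega$, so the hypotheses of Lemma \ref{mainlemma} are available.

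Next I would treat the easy lower-semicontinuous pieces. Since $v_\ve\rightharpoonup v$ in $L^q$ (or weak-$\ast$ in $\M^d$ when $q=1$), the convex integral satisfies $\frac1q\int_\Omega|v|^q\leq\liminf\frac1q\int_\Omega|v_\ve|^q$. The bound on $|\dv v_\ve|(\Omega)$ combined with the distributional convergence $\dv v_\ve\to\dv v$ allows a standard extraction so that $\dv v_\ve\overset{*}\rightharpoonup\dv v$ weakly-$\ast$ as measures in $\overline\Omega$, whence $|\dv v|(\Omega)\leq\liminf|\dv v_\ve|(\Omega)$ by lower semicontinuity of total variation. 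From the Modica-Mortola bound one obtains $(1-\varphi_\ve)^2\to 0$ in $L^1$, so $\varphi=1$ a.e., validating condition (5) in the definition of $F_0$. Up to a further extraction, the positive measures $\mu_\ve:=|\dv v_\ve+f|$ converge weakly-$\ast$ to a measure $\mu\geq|\dv v+f|$ on $\overline\Omega$, and the $1$-Lipschitz functions $d_{\varphi_\ve}(\cdot,y_\ve)$ converge uniformly (Arzel\`a-Ascoli) to a $1$-Lipschitz limit $d$, with $y_\ve\to y_0$.

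The core of the argument is the treatment of the term $\Hh^1_C(v)$, which is the place where Lemma \ref{mainlemma} is used with $c_\ve=\sqrt\ve$. From $\frac1{\sqrt\ve}\int_\Omega d_{\varphi_\ve}(x,y_\ve)\,d\mu_\ve\leq C$ and uniform convergence of $d_{\varphi_\ve}(\cdot,y_\ve)$, we get $\int d\,d\mu=0$, hence $\spt\mu\subseteq K:=\{d=0\}$, and Lemma \ref{mainlemma} then yields: $K$ is compact and connected, $y_0\in K$, and
$$\Hh^1(K)\leq\liminf_{\ve\to 0}\Bigl(\tfrac1{4\ve}\int_\Omega(1-\varphi_\ve)^2+\ve\int_\Omega|\nabla\varphi_\ve|^2\Bigr).$$
Since $\mu\geq|\dv v+f|$, we have $\spt(\dv v+f)\cup\{y_0\}\subseteq\spt\mu\subseteq K$. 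Because $K$ is a compact connected set containing $\spt(\dv v+f)$, it is a candidate in the Steiner problem defining $\Hh^1_C(v)$, so $\Hh^1_C(v)=St(\spt(\dv v+f))\leq\Hh^1(K)$. The inclusion $y_0\in\spt\mu\subseteq K$ also delivers (after possibly enlarging the Steiner competitor to include the point $y_0$, which changes nothing since $y_0\in K$ already) the remaining admissibility condition in $F_0$.

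Combining the three lower-semicontinuity estimates with the identity defining $F_0$ and $F_\ve$,
$$F_0(v,\varphi,y_0)=\tfrac1q\!\int|v|^q+|\dv v|(\Omega)+\Hh^1_C(v)\leq\liminf_\ve F_\ve(v_\ve,\varphi_\ve,y_\ve),$$
which is the desired inequality. The main obstacle, and the reason why all the technical machinery of Section 2 is built, is the third step: showing that the $\Gamma$-liminf controls a quantity of Steiner type. Weak $L^q$ control of $v$ gives no regularity (no slicing, $v\notin BV$), so the connectedness of the limiting set $K$ and the bound $\Hh^1(K)\leq\liminf(\text{Modica-Mortola})$ must be produced intrinsically from $d_{\varphi_\ve}$ via the covering-and-directional-Minkowski-content estimate encoded in Lemma \ref{Lemme2} and the variation inequality of Lemma \ref{variation}, which together form the backbone of Lemma \ref{mainlemma}.
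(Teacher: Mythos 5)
Your proposal follows the same route as the paper: assume the liminf is finite, extract a subsequence, obtain uniform convergence of $d_{\varphi_\ve}(\cdot,y_\ve)$ to a $1$-Lipschitz $d$ and weak-$\ast$ convergence of the relevant measures, then invoke Lemma \ref{mainlemma} to produce the compact connected set $K=\{d=0\}$ with $\Hh^1(K)$ bounded by the liminf of the Modica--Mortola part, and close with the standard lower semicontinuity of the $L^q$-norm and of the total variation. One small imprecision: you assert $y_0\in\spt\mu$ where $\mu$ is the weak-$\ast$ limit of $|\dv v_\ve+f|$, but this does not follow from $\mu\geq|\dv v+f|$ or from $y_\ve\to y_0$; what Lemma \ref{mainlemma} actually gives is $y_0\in K$ (its conclusion (b)), and that is what is used — the parenthetical about "enlarging the Steiner competitor" is unnecessary since $\Hh^1_C(v)$ is already the Steiner length of $\spt(\dv v+f)$ and $K\supseteq\spt(\dv v+f)$ is an admissible competitor.
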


\begin{proof} Without loss of generality we may assume that
\begin{equation}
\liminf_\varepsilon F_\varepsilon(v_\varepsilon, \varphi_\epsilon,y_\varepsilon) <+\infty, \label{bound1}
\end{equation}
otherwise there is nothing to prove. We also assume that the liminf is a limit, achieved for some subsequence $\varepsilon_n \to 0$ that we still denote by $\varepsilon$ for simplicity. In particular during the proof, some further subsequences will be extracted, which does not affect the value of the limit, and we will still denote those sequences by $\varepsilon$.  As a consequence we have that
$$F_\varepsilon(v_\varepsilon, \varphi_\epsilon,y_\varepsilon) \leq C$$
for some constant $C$, and for $\varepsilon$ small enough, but forgetting the first terms we can also assume without loss of generality that it holds for all $\varepsilon$. In particular, we know that each of the terms of $F_\varepsilon$ is uniformly bounded and this implies, in what concerns the second term, that
\begin{equation}
 \varphi_\varepsilon \to 1 \text{ strongly in }L^{2}(\Omega), \label{convphi1}
\end{equation}
and for the last term, that
\begin{equation}
\text{there exists a signed measure } \mu \text{ such that } \dv v_\varepsilon   \destar \mu.
\end{equation}
Since $v_\varepsilon \to v$ weakly in $L^p$, by uniqueness of the limit in the distributional space $\mathcal{D}'(\Omega)$, we get that  $\dv v=\mu$ is a measure.

Next we focus on the distance functionals $d_{\varphi_\varepsilon}$. Since $\sup_\varepsilon \modvec \varphi_\varepsilon\modvec _\infty \leq 1$, the family of functions $x\mapsto d_{\varphi_\varepsilon}(x,y_\varepsilon)$ is equi-Lipschitz on $\overline{\Omega}$. Therefore, up to a subsequence, we may assume that $d_{\varphi_\varepsilon}(\cdot,y_\varepsilon)$ converges uniformly to a function $d(x)$ in $\overline{\Omega}$.

We are now in position to apply Lemma \ref{mainlemma} which says that 
\begin{enumerate}[{\rm ($a$)}]
\item  the compact set $K:=\{x \in \overline{\Omega} \; ;\; d(x)=0\}$ is  connected,
\item $y_0\in K$,
\item $\spt(\dv v+f)\subset K$,
\item
$
\Hh^1(K) \leq \liminf_{\varepsilon\to 0} \left( \frac{1}{4\varepsilon}\int_{\Omega}(1-\varphi_\varepsilon)^2 dx + \varepsilon\int_{\Omega}\modvec \nabla \varphi_\varepsilon \modvec ^2 dx \right).$
\end{enumerate}
 It follows that $\Hh^1(K)\geq \Hh^1_C(v)$ and 
$$F_0(v,\varphi,y)=
\frac{1}{q}\int_{U} \modvec v\modvec ^q dx + |\dv v|(U)+  \Hh^1_C(v).$$
Finally from the lower semicontinuity property with respect to the weak convergence we get
$$\frac{1}{q}\int_{\Omega} \modvec v\modvec ^q dx \leq \liminf_{\varepsilon}\frac{1}{q}\int_{\Omega} \modvec v_{\varepsilon}\modvec ^q dx $$
and 
$$  |\dv v| (\Omega)\leq \liminf_{\varepsilon}  |\dv v_\varepsilon|(\Omega),$$
which finishes the proof of the Theorem. 
\end{proof}

\subsection{Proof of $\Gamma$-limsup inequality}

\begin{thm}  \label{gammalimsup}  Suppose that $\Omega$ is Lipschitz and star-shaped around $0\in\Omega$. Then, for any $(v,\varphi, y_0)$ in  $L^q(\Omega)\times L^2(\Omega) \times \Omega$  there exists a weakly converging sequence $(v_\varepsilon, \varphi_\epsilon,y_\varepsilon)\to (v,\varphi, y_0)$ such that
$$\limsup_{\varepsilon\to 0} F_\varepsilon(v_\varepsilon, \varphi_\epsilon,y_\varepsilon) \leq F_0(v,\varphi,y_0).$$
\end{thm}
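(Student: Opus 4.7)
The plan is to provide an explicit recovery sequence built from Lemma \ref{limsupmodifie}, keeping $v$ and $y_0$ unchanged and only moving the $\varphi$ component, then to verify that the delicate geodesic term vanishes in the limit.

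One may assume $F_0(v,\varphi,y_0) < +\infty$, since otherwise the inequality is trivial. Finiteness forces $\varphi \equiv 1$ a.e.\ in $\Omega$, $\dv v \in \mathcal{M}(\overline\Omega)$, $y_0 \in \spt(\dv v + f) \subset \overline\Omega$, and $\mathcal{H}^1_C(v) < +\infty$. By Remark \ref{thegoodremark}, there exists a compact connected set $K \subset \overline\Omega$ with $\spt(\dv v + f) \subset K$ and $\mathcal{H}^1(K) = \mathcal{H}^1_C(v)$; in particular $y_0 \in K$.

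Choose $b_\varepsilon = \varepsilon^2$, $a_\varepsilon = -2\varepsilon\ln\varepsilon$, $k_\varepsilon = \varepsilon$, and $\delta_\varepsilon = C(\Omega)(a_\varepsilon + b_\varepsilon) \to 0$, and apply Lemma \ref{limsupmodifie} to obtain $\varphi_\varepsilon := \varphi_{\varepsilon,\delta_\varepsilon} \in H^1(\Omega)\cap C^0(\Omega)$, with $0 \le \varphi_\varepsilon \le 1$, $\varphi_\varepsilon \equiv 1$ on $\Omega \setminus (1+\delta_\varepsilon)^{-1}\Omega$ (in particular on $\partial\Omega$), $\varphi_\varepsilon \equiv \varepsilon$ on $(1+\delta_\varepsilon)^{-1}K_{\varepsilon^2}$, and Modica--Mortola energy bounded by the profile of Lemma \ref{limsup}, which tends to $\mathcal{H}^1(K)$. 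Set $v_\varepsilon := v$ and $y_\varepsilon := y_0$; then $\varphi_\varepsilon \to 1$ in $L^2(\Omega)$ by dominated convergence, so the required convergence of triples holds. Three of the five terms of $F_\varepsilon$ are immediate: $\tfrac{1}{q}\int|v_\varepsilon|^q = \tfrac{1}{q}\int|v|^q$, $|\dv v_\varepsilon|(\Omega) = |\dv v|(\Omega)$, and the Modica--Mortola part converges to $\mathcal{H}^1(K) = \mathcal{H}^1_C(v)$.

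The essential verification, and the main obstacle, is
\[
\frac{1}{\sqrt{\varepsilon}} \int_\Omega d_{\varphi_\varepsilon}(x, y_0) \, d|\dv v + f|(x) \to 0.
\]
For each $x \in \spt(\dv v + f) \subset K$, I build an admissible path from $x$ to $y_0$ by concatenating three pieces. By Proposition \ref{rectifiability}(i) there is a Lipschitz curve $\gamma \subset K$ connecting $x$ and $y_0$ of length at most $\mathcal{H}^1(K)$. Since $\Omega$ is star-shaped around $0$, for any $z \in \overline\Omega$ the segment $[(1+\delta_\varepsilon)^{-1}z, z]$ lies in $\overline\Omega$. The pieces are: (i) the radial segment from $x$ to $(1+\delta_\varepsilon)^{-1}x$; (ii) the rescaled curve $(1+\delta_\varepsilon)^{-1}\gamma$, which lies in $(1+\delta_\varepsilon)^{-1}K \subset (1+\delta_\varepsilon)^{-1}K_{\varepsilon^2}$ where $\varphi_\varepsilon \equiv \varepsilon$; (iii) the radial segment from $(1+\delta_\varepsilon)^{-1}y_0$ to $y_0$. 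Piece (ii) contributes at most $\varepsilon \, \mathcal{H}^1(K)$; pieces (i) and (iii), using only $\varphi_\varepsilon \le 1$ and length $\tfrac{\delta_\varepsilon}{1+\delta_\varepsilon}|z| \le \delta_\varepsilon\, \diam(\Omega)$, contribute $O(\delta_\varepsilon)$. Therefore, uniformly in $x$,
\[
d_{\varphi_\varepsilon}(x, y_0) \le \varepsilon \, \mathcal{H}^1(K) + 2\diam(\Omega)\,\delta_\varepsilon \le C(K, \Omega)\, \varepsilon|\ln\varepsilon|.
\]
Integrating against the finite measure $|\dv v + f|$ and multiplying by $1/\sqrt{\varepsilon}$ gives a quantity of order $\sqrt{\varepsilon}|\ln\varepsilon| \to 0$.

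The main difficulty is exactly this geodesic term: because $K$ may touch $\partial\Omega$, the radial contraction from Lemma \ref{limsupmodifie} is unavoidable, and it introduces an extra travel cost of order $\delta_\varepsilon \approx \varepsilon|\ln\varepsilon|$; the prefactor $1/\sqrt{\varepsilon}$ is precisely calibrated so that this cost is negligible, as anticipated in the introduction. Collecting all estimates yields $\limsup_\varepsilon F_\varepsilon(v_\varepsilon,\varphi_\varepsilon,y_\varepsilon) \le F_0(v,\varphi,y_0)$, which concludes the $\Gamma$-limsup inequality.
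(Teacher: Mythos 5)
Your proof is correct and follows essentially the same approach as the paper's: take $v_\ve=v$, $y_\ve=y_0$, build $\varphi_\ve$ from Lemmas \ref{limsup} and \ref{limsupmodifie}, and bound the geodesic term by concatenating two short segments (cost $O(\delta_\ve)$) with a path inside the contracted neighborhood of $K_0$ where $\varphi_\ve=k_\ve$ (cost $O(k_\ve)$). Your choices $k_\ve=\ve$ and $x_1=(1+\delta_\ve)^{-1}x$, $x_2=(1+\delta_\ve)^{-1}y_0$ are explicit instances of what the paper does with $k_\ve=o(\ve)$ and unspecified nearby points, and the final estimate $\delta_\ve/\sqrt\ve\sim\sqrt\ve|\ln\ve|\to 0$ is identical.
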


\begin{proof} We may assume that $F_0(v,\varphi,y_0)<+\infty$ otherwise there is nothing to prove. This implies that $v \in \mathcal{M}^{(q)}(\Omega)$, $y \in \spt(\dv v +f)$, and $\varphi =1$ a.e. in $\Omega$. Recall that this implies that  $v \in L^q(\Omega,\R^2)$ and that ${\rm div}\;v$ is a  singular measure with respect to the  Lebesgue measure supported on an $\mathcal{H}^1$-rectifiable set.  Let us also recall that in this case
$$F_0(v,\varphi,y)=
\frac{1}{q}\int_{\Omega} \modvec v\modvec ^q dx + |\dv v|(\Omega)+  \Hh^1_C(v)$$
where
\begin{eqnarray}
\Hh^1_C(v):=\inf\{ \Hh^1(K) \;;\; K \subset \overline{\Omega}\text{ is closed, connected, and } \spt(\dv v +f) \subseteq K\}. \label{infimum}
\end{eqnarray}

Let $K_0$ be the compact and connected set given by Remark \ref{thegoodremark} such that
$$\Hh^1_C(v)=\Hh^1(K_0).$$
We also have that $-\dv v =f$ in the sense of distribution in $\Omega\setminus K_0$. Let  $a_\varepsilon$ and $b_\varepsilon$ be the same parameters than the ones of Lemma \ref{limsup}, let $\varphi_\varepsilon$ be the family of functions given by Lemma \ref{limsup} with $k_\varepsilon=o(\varepsilon)$, and then consider the family of functions
$\varphi_{\varepsilon,\delta_\varepsilon}$ given by Lemma \ref{limsupmodifie} with the choice 
$\delta_\varepsilon:=C(\Omega)(a_\varepsilon+b_\varepsilon).$
We assume that $\varepsilon$ is small enough so that the assumptions  of Lemma \ref{limsupmodifie} are satisified, and  $\varphi_{\varepsilon,\delta_\varepsilon} =1$ on $\partial \Omega$ thanks to \eqref{cool0}. For simplicity we  denote again by $\varphi_{\varepsilon}$ the functions
$\varphi_{\varepsilon,\delta_\varepsilon}$. We know that
$$\limsup_{\varepsilon \to 0} \left( \int_{\Omega}\varepsilon |\nabla \varphi_\varepsilon|^2 + \frac{(1-\varphi_\varepsilon)^2}{4\varepsilon} \right) \leq \Hh^{1}(K_0)=\Hh^1_C(v).$$
We also take $v_\varepsilon=v$ and $y_\varepsilon=y_0$ for all $\varepsilon>0$.

Now looking at each term of $F_{\varepsilon}(v_\varepsilon,\varphi_\varepsilon,y_0)$ containing $v_\varepsilon$, we notice that the only non constant one is 
\begin{eqnarray}
\frac{1}{\sqrt{\varepsilon}}\int_{\Omega}d_{\varphi_\varepsilon}(x,y_\varepsilon)\, d\big(|\dv v +f|\big)(x)  . \label{term2} \label{term3}
\end{eqnarray}
Remember that $|\dv v +f|$ is supported on $K_0$.  Next, we recall that $\varphi_\varepsilon=k_\varepsilon$ on the connected set $(1+\delta_\varepsilon)^{-1}K_0$, and moreover 
$$d(K_0,(1+\delta_\varepsilon)^{-1}K_0)\leq \diam(\Omega) \delta_\varepsilon.$$
Therefore we can find two points $x_1$ and $x_2$ in $(1+\delta_\varepsilon)^{-1}K_0$ such that 
$$\max(d(x, x_1),d(y_0,x_2))\leq \diam(\Omega) \delta_\varepsilon .$$
 Furthermore, since $(1+\delta_\varepsilon)^{-1}K_0$ is path connected, it contains a rectifiable path $\Gamma_x$ connecting $x_1$ to $x_2$ inside $K_0$. By consequence the path 
 $$\Gamma:= [x , x_1] \cup \Gamma_x \cup[x_2, y_0]$$
   is an admissible path in the definition of $d_{\varphi_{\varepsilon}}(x,y_0)$ which yields
   \begin{eqnarray} \label{final1}
0\leq d_{\varphi_{\varepsilon}}(x,y_0)\leq \int_{\Gamma} \varphi_\varepsilon(s) d\Hh^1(s)\leq 2\diam(\Omega)\delta_\varepsilon+k_\varepsilon \mathcal{H}^1(K_0),
\end{eqnarray}
because $\varphi_\varepsilon=k_\varepsilon$ on $\Gamma_x$, is smaller than $1$ everywhere, and because 
$$\mathcal{H}^1(\Gamma_x)\leq \mathcal{H}^1((1+\delta_\varepsilon)^{-1}K_0)\leq \mathcal{H}^1(K_0).$$

 We have just proved that  $d_{\varphi_\varepsilon}(x,y_\varepsilon)\leq C\delta_\varepsilon$ on $K_0$ (we use that $\varepsilon << \delta_\varepsilon$) and it follows that 
\begin{eqnarray}\label{final2}
\frac{1}{\sqrt{\varepsilon}}\int_{\Omega}d_{\varphi_\varepsilon}(x,y_\varepsilon) d|\dv v +f|(x)\leq   \frac{C\delta_\varepsilon}{\sqrt{\varepsilon}}|\dv v +f|(\Omega) \to 0,
\end{eqnarray}
because $\delta_{\varepsilon}=C(a_\varepsilon+b_\varepsilon)=C(\varepsilon^2+2\varepsilon|\ln(\varepsilon)|)<<\sqrt{\varepsilon}$, which implies
$$\limsup_{\varepsilon \to 0} F_{\varepsilon}(v_\varepsilon,\varphi_\varepsilon,y_0) \leq \frac{1}{q}\int_{\Omega} \modvec v\modvec ^q dx + |\dv v|(\Omega)+  \Hh^1_C(v),$$
and finishes the proof.
\end{proof}

\subsection{A note on the existence of minimizers for $\ve>0$}\label{existence issues}

\label{sectionexistence}

The existence of minimizers for the functionals $F_{\varepsilon}(v,\varphi,y)$ when $\varepsilon>0$ is fixed is a very delicate matter, and the same is true for the minimization of $S_\ve$ that we used in Section 4 to approximate the Steiner problem. Indeed, the troubles come from the behavior of the map $\varphi\mapsto d_\varphi$. First, notice that we only restricted our attention to $\varphi\in C^0(\overline{\Omega})$ for the sake of simplicity, in order to get a well-defined $d_\varphi$. Indeed, it is possible to define $d_\varphi$ as a continuous function as soon as $\varphi\in L^p$ for an exponent $p$ larger than the dimension (here, $p>2$, see \cite{CarJimSan}). Since we use functions $\varphi$ which are in $H^1$ in dimension two, they belong to $L^p$ for every $p$, and $d_\varphi$ could be defined in this (weak) sense. The difficult question is which kind of convergence on $\varphi$ provides convergence for $d_\varphi$ (notice that in this setting, as soon as $|\varphi|\leq 1$ any kind of weaker convergence, including pointwise one, implies uniform convergence since all the functions $d_\varphi$ are $1-$Lipschitz). If one wanted upper semi-continuity of the map $\varphi\mapsto d_\varphi(x,x_1)$ (for fixed $x$ and $x_1$), this would be easy, thanks to the concave behavior of $d_\varphi$, and any kind of weak convergence would be enough. Yet, in the case of our interest, we would like lower semi-continuity, which is more delicate. An easy result is the following: if $\varphi_n\to\varphi$ uniformly and a uniform lower bound $\varphi_n\geq c>0$ holds, then $d_{\varphi_n}(x,x_1)\to d_{\varphi}(x,x_1)$. Counterexamples are known if the lower bound is omitted. On the contrary, replacing the uniform convergence with a weak $H^1$ convergence (which would be natural in the minimization of $S_\ve$) is a delicate matter (by the way, the continuity seems to be true and it is not known whether the lower bound is necessary or not), which is the object of an ongoing work with T. Bousch.

However a careful look at our proofs reveals that we could change the space on which the approximating functional is defined as $k_\varepsilon\leq \varphi\leq 1$ instead of $0\leq \varphi\leq 1$, for some $k_\varepsilon\to 0$. The $\Gamma$-convergence result of Section \ref{Section-approx-average} still holds with this little modification, and now, up to add a term of the form    $\ve^{p+1}\int|\nabla\varphi(x)|^pdx$ for $p>2$ to the functional $F_\varepsilon$ (which obviously does not change the $\Gamma$-limit but helps to extract uniformly convergent subsequences), a minimizer do exists for $F_\varepsilon$.

For the Steiner approximation one can follow the same strategy, at the difference that $k_\varepsilon$ must be chosen so that $k_\varepsilon/c_\varepsilon\to 0$, for instance $k_\varepsilon=c_\varepsilon^2$ in order to cancel the term involving $d_\varphi$ at the limit. 


We stress anyway that from the point of view of  the numerical applications this lack of semicontinuity is not crucial, and moreover that no true minimizer is really needed but quasi-minimizers as in Definition \ref{approximateMin} are enough.


\section{Numerical approximation}

In this section, we apply the relaxation process described in Section \ref{Section-approx-average} to approximate numerically the solution of the compliance problem \eqref{pb20}, in the case $p=2$, and for a constant right-hand side $f\equiv 1$. We decided for the sake of simplicity to stick to a unique problem, and to choose the most ``regular'' one, i.e. the quadratic compliance problem. The approach for the average distance problem for other values of the exponent $p$ would be essentially similar.

We consider a rectangular domain $\Omega\subset \R^2$ and we fix $\Lambda > 0$ and a point $y_0\in\Omega$. For all $\eps > 0$ and for every pair $(v,\varphi)\in C^1(\Omega,\R^2)\times (H^1(\Omega)\cap C^0(\Omega))$, such that
$v \cdot n_\Omega = 0$ on $\partial\Om$ and $
0\leq \varphi \leq 1$,
we define
\begin{align}
G_{\Lambda,\eps}(v,\varphi) = \frac{1}{2} \int_{\Om}|v|^2 + \frac{\Lambda}{4\eps}\int_{\Om}(1-\varphi)^2 + \Lambda \eps\int_{\Om}|\nabla \varphi|^2 \nonumber\\ +\frac{1}{2\sqrt{\eps}}\int_{\Om}\sqrt{(\dv v + 1)^2 + \eps^2}\ \dphi(x,y_0)\ dx. \label{Definition:fonctionnelle-regularisee}
\end{align}
In the definition above,  the last integral is a regularization of the original term 
\[\int_\Om |\dv v+1|\ \dphi(x,y_0)\ dx,\]
allowing for differentiation with respect to $v$.

Our goal is to compute an approximate value of a minimizer $(v_{\Lambda,\eps},\varphi_{\Lambda,\eps})$ of $G_{\Lambda,\eps}$, for a given $\Lambda > 0$ and a small value of $\eps$.

\subsection{Discretization of the relaxed problem}

To simplify the notation, we consider the case $\Om=(0,1)^2$ throughout this subsection. The discretization can be adapted straightforwardly to the case of a rectangular domain $\Omega$, discretized by a squared grid of size $h\times h$. We fix $N\in\N^*$, a step $h=\frac{1}{N+1}$ and we define a regular grid, composed of squared cells $(C_{i,j})_{1\leq i,j\leq N+1}$, defined by
\[
C_{i,j}=((i-1)h,ih)\times((j-1)h,jh),\quad \text{for}\ 1\leq i,j\leq N+1.
\]
We denote by $Y_{i,j}$ the center of the cell $C_{i,j}$, defined by $Y_{i,j}=((i-\frac{1}{2})h, (j-\frac{1}{2})h ) $. Following a standard approach in numerical fluid dynamics, the approximations of the scalar fields $\varphi$, $\dv v$ are located on the centers $Y_{i,j}$, whereas the vector fields $\nabla \varphi$, $v$ are discretized on a staggered grid. Namely, the horizontal components of the fields are computed on a $(N+1)\times (N+2)$ grid $X^1$, which is located on the midpoints of the vertical cells interfaces :
\[
X^1_{i,j}=(ih,(j-\frac{1}{2})h)\quad 0\leq i\leq N+1,\ 1\leq j \leq N+1,
\]
and the vertical components are computed on a $(N+2)\times (N+1)$ grid $X^2$, located on the midpoints of the horizontal interfaces :
\[
X^2_{i,j}=((i-\frac{1}{2})h,jh)\quad 1\leq i\leq N+1,\ 0\leq j \leq N+1.
\]

Here is the discretization for the vector field $v$, and its divergence is coherently defined as a scalar field at the center of each square of the grid. We need to ingetrate $d_\phi$ according to a measure involving $\dv v$, so we need to define $d_\phi$ on the same grid. Since the distances $d_\phi$ are computed via a Fast-Marching algorithm which provides values for $d_\phi$ on the same regular grid where $\phi $ is defined, we also define $\phi $ on the very same grid.  In what follows we will denote by $\phi_{i,j}$ the value of a discrete scalar field $\phi$ at point $Y_{i,j}$, and by $V^1_{i,j}$ (resp. $V^2_{i,j}$) the value of the first (resp. second) component of a discrete vector field $V$, at point $X^1_{i,j}$ (resp. $X^2_{i,j}$). Gradients are computed by finite differences as follows

\begin{align*}
(D_x\phi)_{i,j} & = \frac{\phi_{i+1,j} - \phi_{i,j}}{h},\quad  1\leq i\leq N,\quad 1\leq j \leq N+1,\\
(D_y\phi)_{i,j} & = \frac{\phi_{i,j+1} - \phi_{i,j}}{h},\quad  1\leq i\leq N+1,\quad 1\leq j \leq N,
\end{align*}

In the same fashion, we approximate at the center of the cells the divergence of a vector field $v$, associated to its discrete representative $V$, using the operator $\Dv$ defined by
\begin{align*}
(\Dv V)_{i,j} & = \frac{V^1_{i,j} - V^1_{i-1,j}}{h} + \frac{V^2_{i,j} - V^2_{i,j-1}}{h},\qquad 1\leq i,j\leq N+1.
\end{align*}
To be consistent with the boundary condition $v \cdot n_\Omega = 0$  satisfied by the continuous vector fields $v$ on $\partial\Om$, we impose the boundary conditions
\begin{align*}
V^1_{0,j} = V^1_{N+1,j} = 0, \quad & 1\leq j \leq N+1,\\
V^2_{i,0} = V^2_{i,N+1} = 0, \quad & 1\leq i\leq N+1.
\end{align*}

\paragraph{The discretized functional.}
We discretize the functional $G_{\Lambda,\eps}$, defined by \eqref{Definition:fonctionnelle-regularisee}, using a first order discretization of the integrals.

We assume that the point $y_0\in\Om$, associated to the geodesic distance $d_{\varphi}({\scriptstyle\bullet},y_0)$, coincides with a certain point $Y_{i^*,j^*}$ of the grid $Y$.
To approximate the geodesic distance, we apply a fast marching algorithm (see \cite{sethian-fastmarching}, \cite{tsitsiklis}) using the vector $\phi$, the discrete representative of $\varphi$ on grid $Y$. We denote by $d_{\phi}$ the corresponding discrete geodesic distance, which is located at points of grid $Y$. Notice that $d_{\phi}$ depends on the indices $(i^*,j^*)$, but we have dropped this dependency to lighten the notation.

Now, we introduce the discrete functional $\Ghlambeps$, obtained by discretizing the functional $G_{\Lambda,\eps}$ using step $h$. To simplify the notation, we introduce the following sets of subscripts $(i,j)$ :
\[
I_1 = [\mspace{-2.5 mu}[1 , N ]\mspace{-2.5 mu}] \times  [\mspace{-2.5 mu}[1, N+1]\mspace{-2.5 mu}],
\quad
I_2 =  [\mspace{-2.5 mu}[ 1 , N+1]\mspace{-2.5 mu}] \times  [\mspace{-2.5 mu}[ 1, N]\mspace{-2.5 mu}]
\quad 
J=  [\mspace{-2.5 mu}[ 1, N+1 ]\mspace{-2.5 mu}]^2.
\]
For every discrete scalar field $\phi$, satisfying $\eta\leq \phi \leq 1$, and every discrete vector field $V$, satisfying the boundary conditions
\begin{align*}
V^1_{0,j} = V^1_{N+1,j} = 0, \quad & 1\leq j \leq N+1,\\
V^2_{i,0} = V^2_{i,N+1} = 0, \quad & 1\leq i\leq N+1,
\end{align*}
 we define $\Ghlambeps(V,\phi)$ by
\begin{align*}
\Ghlambeps(V,\phi) &  = \frac{1}{2} \Big(\sum_{(i,j)\in I_1}h^2 (V^1_{i,j})^2 + \sum_{(i,j)\in I_2} h^2 (V^2_{i,j})^2\Big) \\
& + \frac{\Lambda}{4\eps}\sum_{(i,j)\in J} h^2 (1-\phi_{i,j})^2  + \Lambda\eps \Big(\sum_{(i,j)\in I_1} h^2 [(D_x\phi)_{i,j}]^2 + \sum_{(i,j)\in I_2} h^2 [(D_y\phi)_{i,j}]^2\Big)\\
& + \frac{1}{2\sqrt{\eps}} \sum_{(i,j)\in J} h^2 \sqrt{((\Dv V)_{ij}+1)^2+\eps^2}\ (d_{\phi})_{i,j}.
\end{align*}

Let us stress that the functional $\Ghlambeps$ is strictly convex with respect to $V$, but, in general, non convex with respect to $\phi$. This is a consequence of the concavity of the functions
$
\phi \mapsto (d_{\phi})_{i,j}
$,
(see for instance \cite{bcps}). {\color{black} This concavity is both satisfied in a continuous setting, and in the discrete approximation of the fast marching algorithm.} Thus, the convexity properties of the functional $\Ghlambeps$, with respect to $\phi$, results from the competition between the quadratic term
\[
\frac{\Lambda}{4\eps}\sum_{(i,j)\in J} h^2 (1-\phi_{i,j})^2  + \Lambda\eps \Big(\sum_{(i,j)\in I_1} h^2 [(D_x\phi)_{i,j}]^2 + \sum_{(i,j)\in I_2} h^2 [(D_y\phi)_{i,j}]^2\Big)
\]
and the concave term
\begin{equation}\label{concave-term}
\frac{1}{2\sqrt{\eps}} \sum_{(i,j)\in J} h^2 \sqrt{((\Dv V)_{ij}+1)^2+\eps^2}\ (d_{\phi})_{i,j}.
\end{equation}
{\color{black} As a result, the search for a
global minimizer of $\Ghlambeps$ for arbitrary values of $\Lambda, \varepsilon$ is very delicate. 
However, the convex term (with coefficient $\Lambda/\eps$) can be expected to dominate the concave one (with coefficient $1/(2\sqrt{\eps})$), at least when $\eps$ is large enough. This observation is the key point in the optimization strategy that we present in the next paragraphs, inspired from the works of Oudet in \cite{OudetKelvin,OudetSantam}.}

\subsection{The optimization process}\label{section:optimization-process}

To take into account the specificities of each functional $\Ghlambeps({\scriptstyle\bullet}, \phi)$ and $\Ghlambeps(V,{\scriptstyle\bullet})$, we propose to optimize alternatively in each direction $V, \phi$. For a given $\Lambda > 0$ and a fixed $\eps>0$, we define the following minimization algorithm $(MA)_\eps$ :
\begin{quote}
\hrulefill
\begin{center}
\textsc{Minimisation algorithm $(MA)_\eps$}
\end{center}
\begin{itemize}
\item \textsc{Inputs:} a tolerance $\delta>0$, an initial guess $\phi^0$.
\item \textsc{Output:} a pair $(V_{\eps},\phi_{\eps})$, local minimizer of $\Ghlambeps$ with respect to each direction $({\scriptstyle\bullet},\phi_\eps)$ and $(V_\eps,{\scriptstyle\bullet})$.
\item \textsc{Instructions:}
\begin{enumerate}
\item Define $n=0$ and $V^0\equiv 0$. 
\item Repeat :
\item\label{Step-CG} Find $V^{n+1}$, the global minimizer of $\Ghlambeps({\scriptstyle\bullet},\phi^n)$. 
\item\label{Step-SPG} Find $\phi^{n+1}$, a (local) minimizer of $\Ghlambeps(V_{n+1},{\scriptstyle\bullet})$. 
\item $n:=n+1$.
\item Until $|\Ghlambeps(V^{n},\phi^{n})-\Ghlambeps(V^{n-1},\phi^{n-1})|\leq \delta$.
\item Define $(V_\eps,\phi_\eps):=(V^{n},\phi^{n})$.
\end{enumerate}
\end{itemize}
\hrulefill
\end{quote}
Step \ref{Step-CG} is performed using Fletcher-Reeves nonlinear conjugate gradient algorithm, implemented in the GNU Scientific Library \cite{GSL-Lib}. We refer to \cite{Ref-CG} for a description of the algorithm.
For step \ref{Step-SPG}, to take into account the constraints $\eta\leq \phi \leq 1$, we apply the "Spectral Projected Gradient Method", which is a classical projected gradient method extended to include nonmonotone line search strategy and the spectral steplength. The version that we use is a part of the Open Optimization Library \cite{OOL-Lib}, and implements the algorithm published originally by Birgin \emph{et al.} \cite{Ref-SPG}.

To apply this projected gradient method, we have to address the differentiability of the functional $\Ghlambeps$ with respect to $\phi$, and define its gradient at each step. However, as observed in \cite{bcps}, the functions $\phi \mapsto (d_{\phi})_{i,j}$ fail to be differentiable, in general. This is a consequence of the lack of differentiability of the geodesic distance with respect to the metric. Nevertheless, Benmasour \emph{et al.} prove in \cite{bcps} that the functions $(d_{\phi})_{i,j}$, computed using a fast marching algorithm, are concave (and continuous) functions of $\phi$, and propose an algorithm, called the "subgradient marching", allowing to compute, at each node $(i,j)$, both $(d_{\phi})_{i,j}$ and an element of the superdifferential at a given $\phi$. Using the subgradient marching method, we are able to define a descent direction, obtained by summing the gradient of the quadratic term and an element of the superdifferential of the concave term of $\Ghlambeps$.

{\color{black}Let us emphasize that a global minimizer of $\Ghlambeps$, for a given $\eps$, is hard to find, because of the concavity.} Moreover, the alternate directions optimization strategy does not guarantee that the profiles obtained for $(V,\phi)$ are, in fact, local minimizers; they are local minimizers with respect to perturbations of $V$, or $\phi$, separately.

In order to avoid some local minimizers, we apply a strategy proposed by Oudet in \cite{OudetKelvin} and then applied to other problems. For a given $\Lambda>0$, we consider a target value $\eps_{final}$, and the problem of minimizing $G^h_{\Lambda,\eps_{final}}$. Due to the concave term \eqref{concave-term}, step \ref{Step-SPG} of algorithm $(MA)_{\eps}$, may lead to a local minimizer $\phi^{n+1}$. To avoid this phenomenon, we apply the following iterative optimisation procedure:

\begin{quote}
\hrulefill
\begin{center}
\textsc{Iterative optimisation algorithm}
\end{center}
\begin{itemize}
\item \textsc{Inputs:} 
\begin{itemize}
\item an integer $L\in\N$ and a decreasing family $\lbrace \eps_{\ell} \rbrace_{\ell=0}^L$ of positive numbers, such that $\eps_L=\eps_{final}$;
\item an initial guess $\phi_0$;
\item a tolerance $\delta > 0$.
\end{itemize}
\item \textsc{Output:} a family of local minimizers $(V_{\eps_{\ell}},\phi_{\eps_{\ell}})$, of each functional $G^h_{\Lambda,\eps_{\ell}}$, for $\ell=0,\ldots,L$.
\item \textsc{Instructions:}
\begin{enumerate}
\item Define $(V_{\eps_0},\phi_{\eps_0})$ as the output of algorithm $(MA)_{\eps_0}$, with the tolerance $\delta$ and the initial guess $\phi_0$. 
\item For $\ell=1,\ldots, L$ :
\item Define $(V_{\eps_{\ell}},\phi_{\eps_{\ell}})$ as the output of algorithm $(MA)_{\eps_{\ell}}$, with the initial guess $\phi_{\eps_{\ell-1}}$ and the tolerance $\delta$. 
\end{enumerate}

\end{itemize}
\hrulefill
\end{quote}

\subsection{Numerical results}

We have applied the procedure described in Section \ref{section:optimization-process}, in the case of a rectangle $\Omega = (0,0.5)\times (0,1)$ discretized by a regular grid, composed of squares of size $h=1/100$. The optimisation algorithm was initialized using a quadratic, nonnegative profile for the initial guess $\phi^0$, vanishing only at point $y_0$. This particular choice of $\phi^0$ was motivated by the fact that,
 at convergence of the algorithm, we expect $\phi$ to vanish at point $y_0$, and to take values close to $1$, far from this point.

We present in Figure \ref{Fig:lambda} the optimal profiles for $\phi$, obtained for different values of the parameter $\Lambda$, associated to the penalization of the length of the unknown connected set $K$. In these simulations, we have fixed $y_0=(0.25,0.5)$, that is, the center of the rectangle. The results that are plotted correspond to $\eps = 0.05$. We observe that the sets $\lbrace \varphi = 0\rbrace$ appear as one-dimensional objects, the length of which decreases as $\Lambda$ increases. This feature is consistant with the principle of the penalization. Although we cannot assure that the candidates that we exhibit are, in fact, global minimizers of each functional, this consistency with respect to $\Lambda$ argues in favor of an implicit selection of the minimizers, performed by the algorithm.

We emphasize that, in these examples, the zero level sets of $\phi$ are connected, and contain the point $y_0$. In order to verify that this property still holds for a different position of $y_0$, we have represented in Figure \ref{Fig:decentre} the optimal profile of $\phi$ and the divergence of $V$ obtained with $\Lambda=20$ and $\eps=0.05$, for an off-center grid point $y_0$ (with coordinates $y_0=(0.351485, 0.59901)$). On the plot of $\dv V$, this point can be identified as the most singular point for the divergence. As in the former examples, the zero level set of $\phi$ appears as a connected set containing $y_0$. As a result, we may infer that our numerical method is able to force the zero level set of $\phi$ to be connected to a given grid point.


\begin{center}
\begin{figure}
\begin{center}
	\begin{minipage}[t]{220pt}
		\includegraphics[scale=0.40]{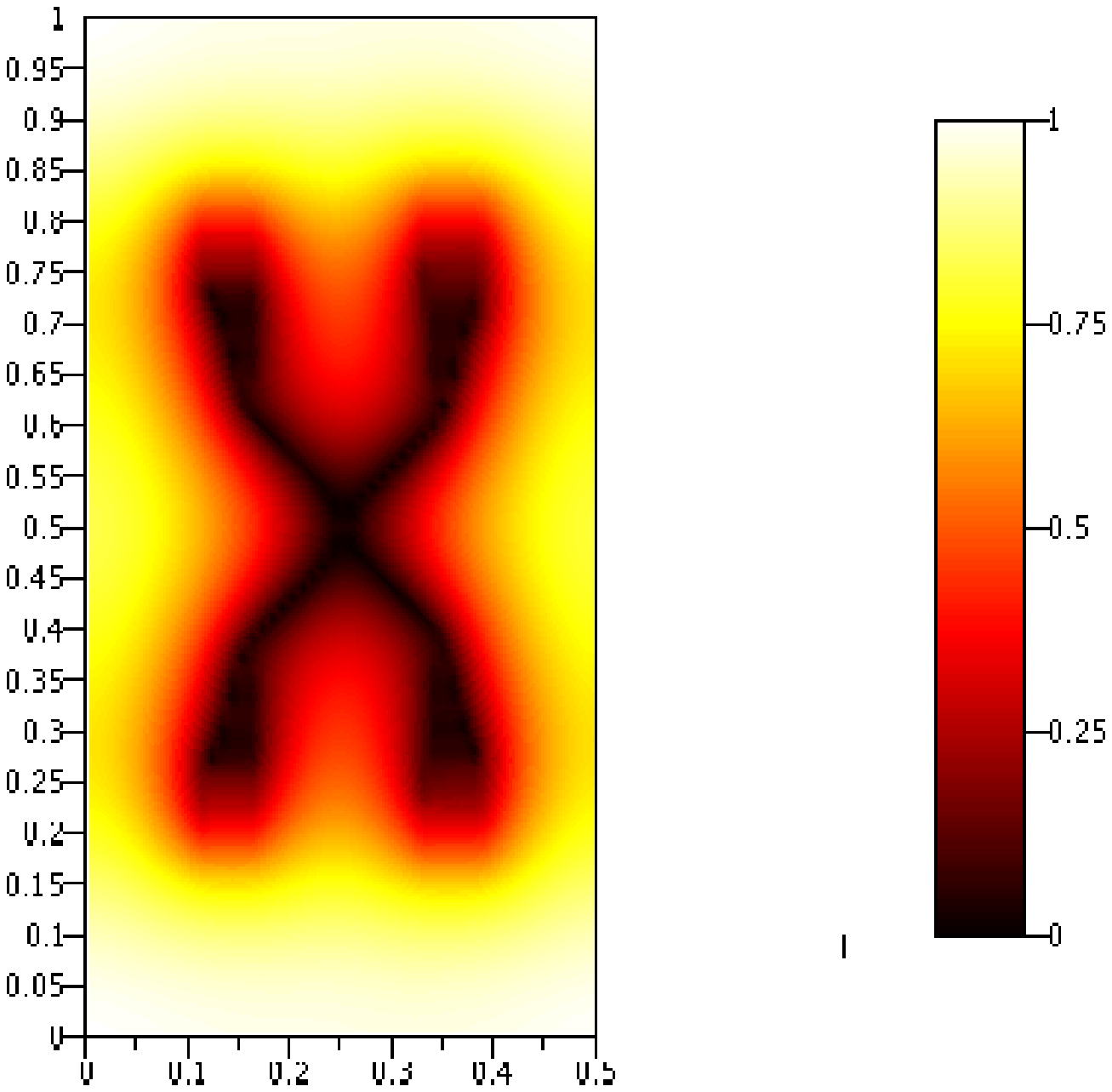}
		\caption*{$\Lambda=10$}
	\end{minipage}
	\begin{minipage}[t]{220pt}
		\includegraphics[scale=0.40]{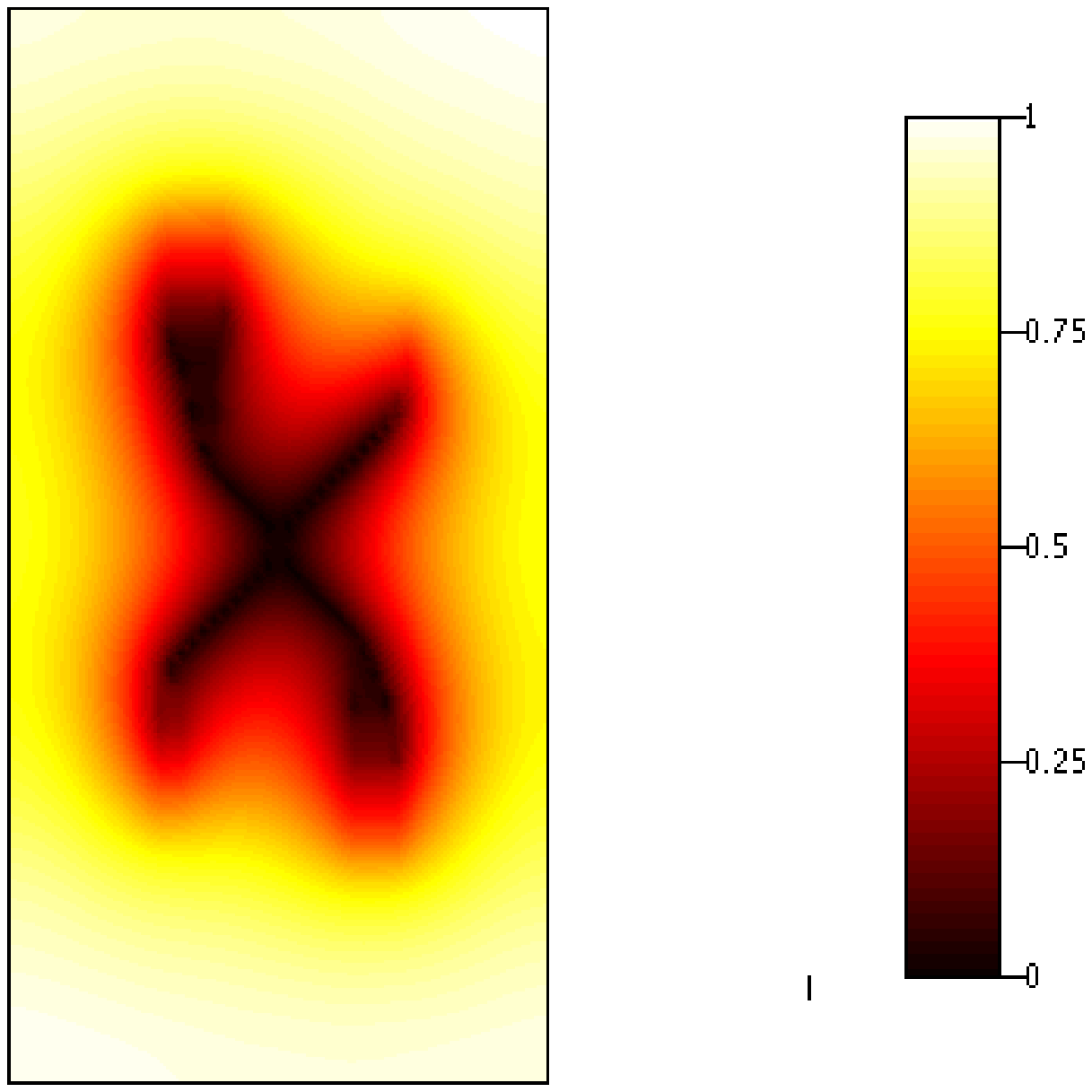}
		\caption*{$\Lambda=15$}
	\end{minipage}
\end{center}
\begin{center}
	\begin{minipage}[t]{220pt}
		\includegraphics[scale=0.40]{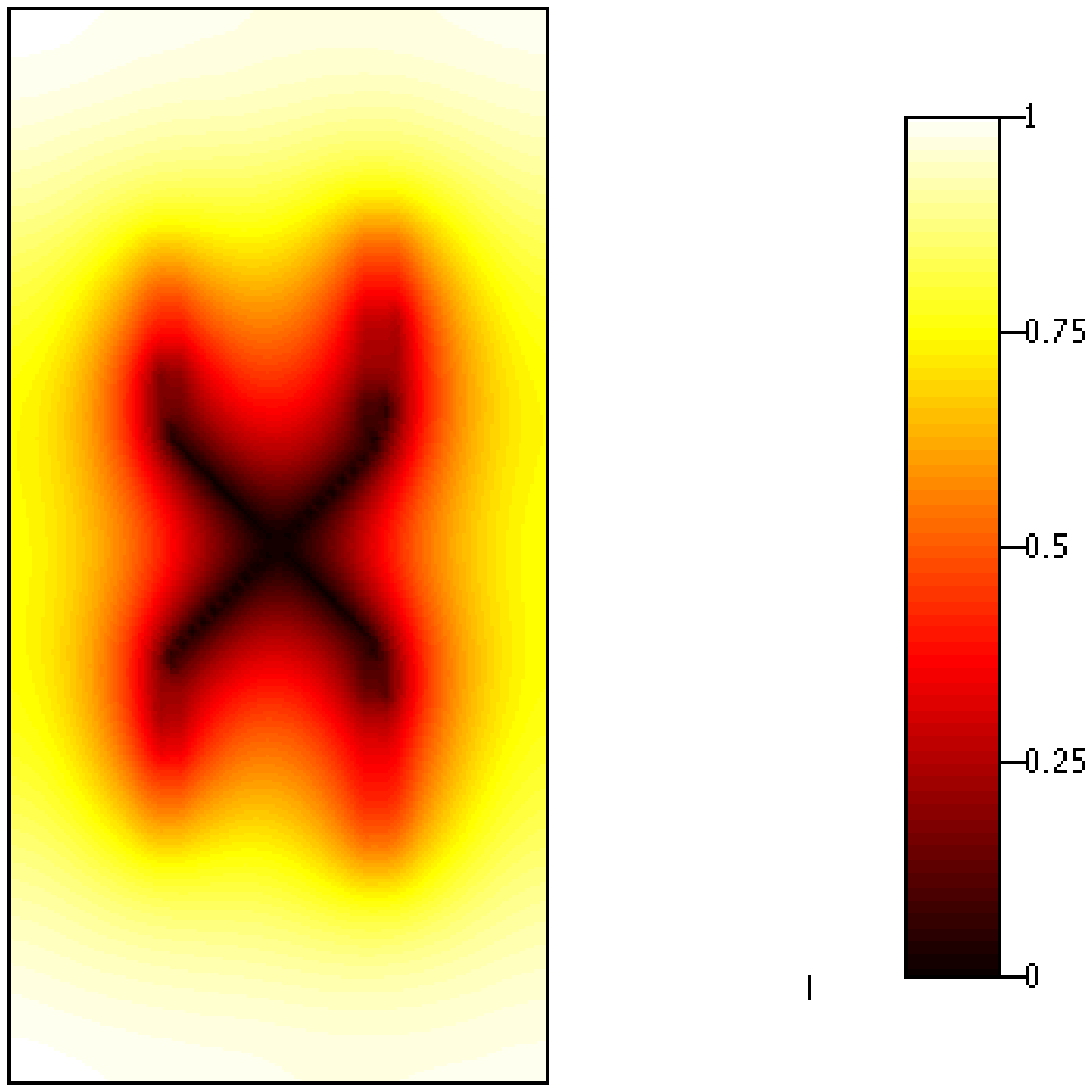}
		\caption*{$\Lambda=20$}
	\end{minipage}
	\begin{minipage}[t]{220pt}
		\includegraphics[scale=0.40]{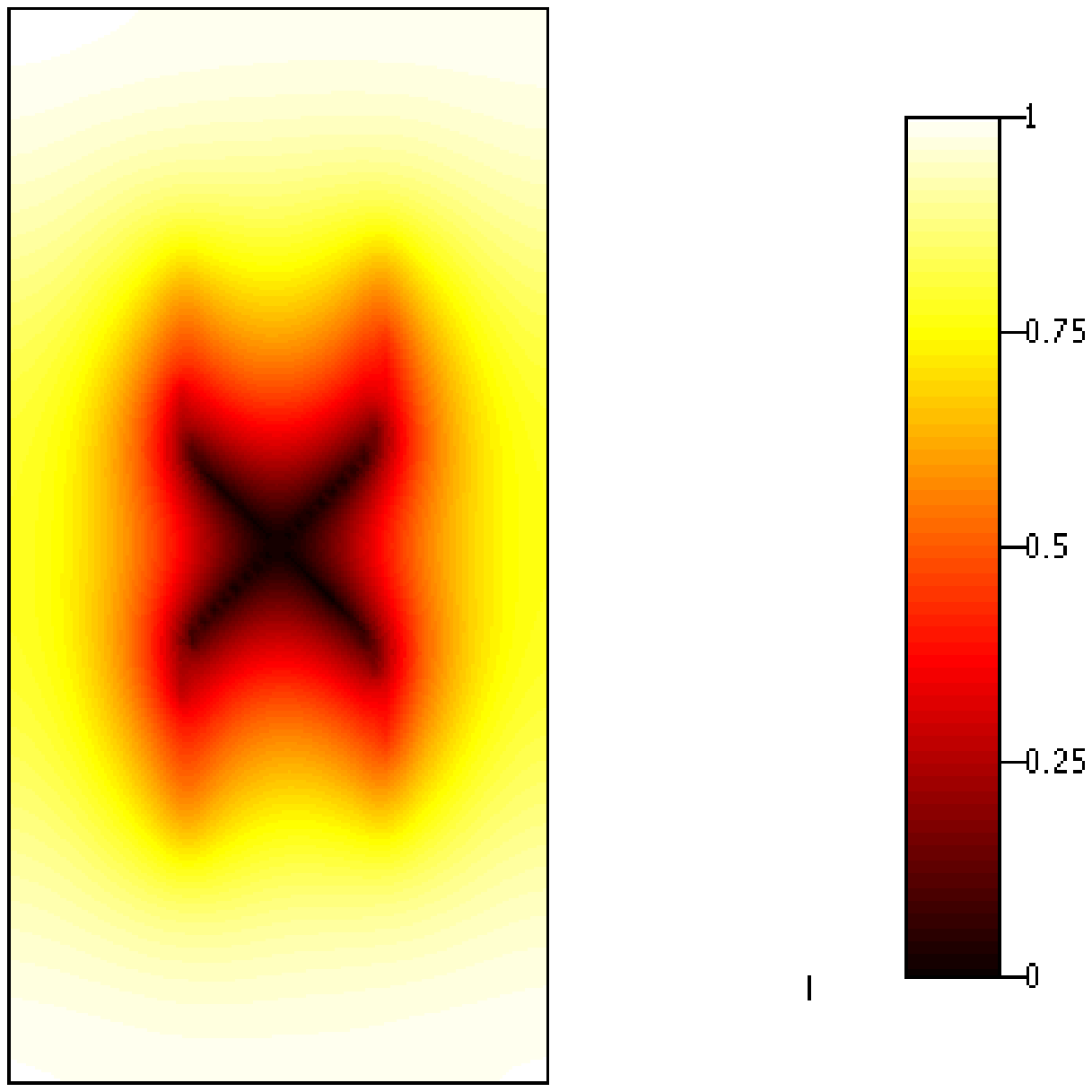}
		\caption*{$\Lambda=30$}
	\end{minipage}
\end{center}
\caption{Optimal profiles for $\phi$, associated to different values of $\Lambda$, with $\eps=0.05$. The point $y_0$ is the center of the rectangle.}\label{Fig:lambda}
\end{figure}
\end{center}


\begin{center}
\begin{figure}
\begin{center}
	\begin{minipage}[t]{220pt}
		\includegraphics[scale=0.40]{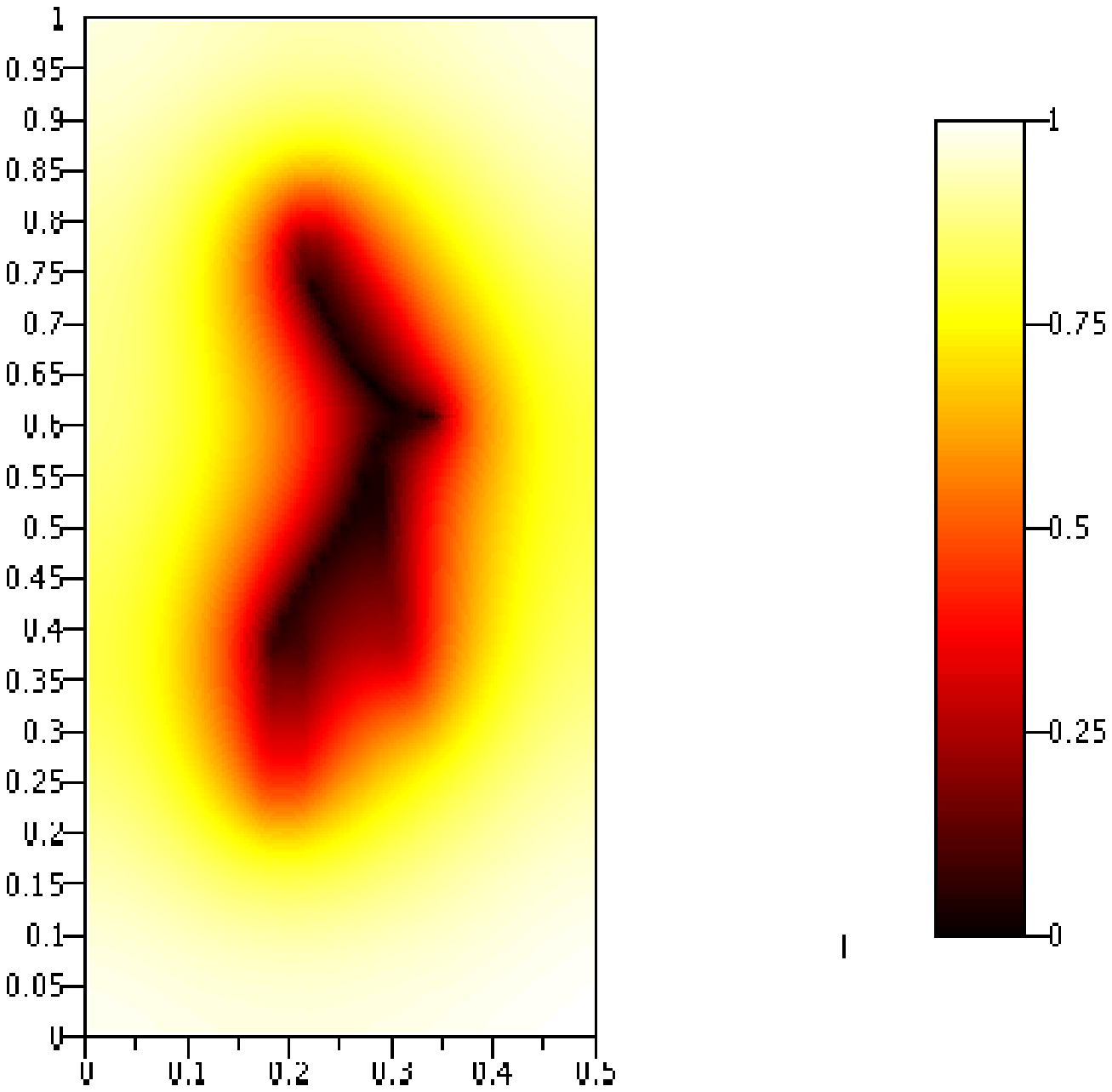}
	\end{minipage}
	\begin{minipage}[t]{220pt}
		\includegraphics[scale=0.40]{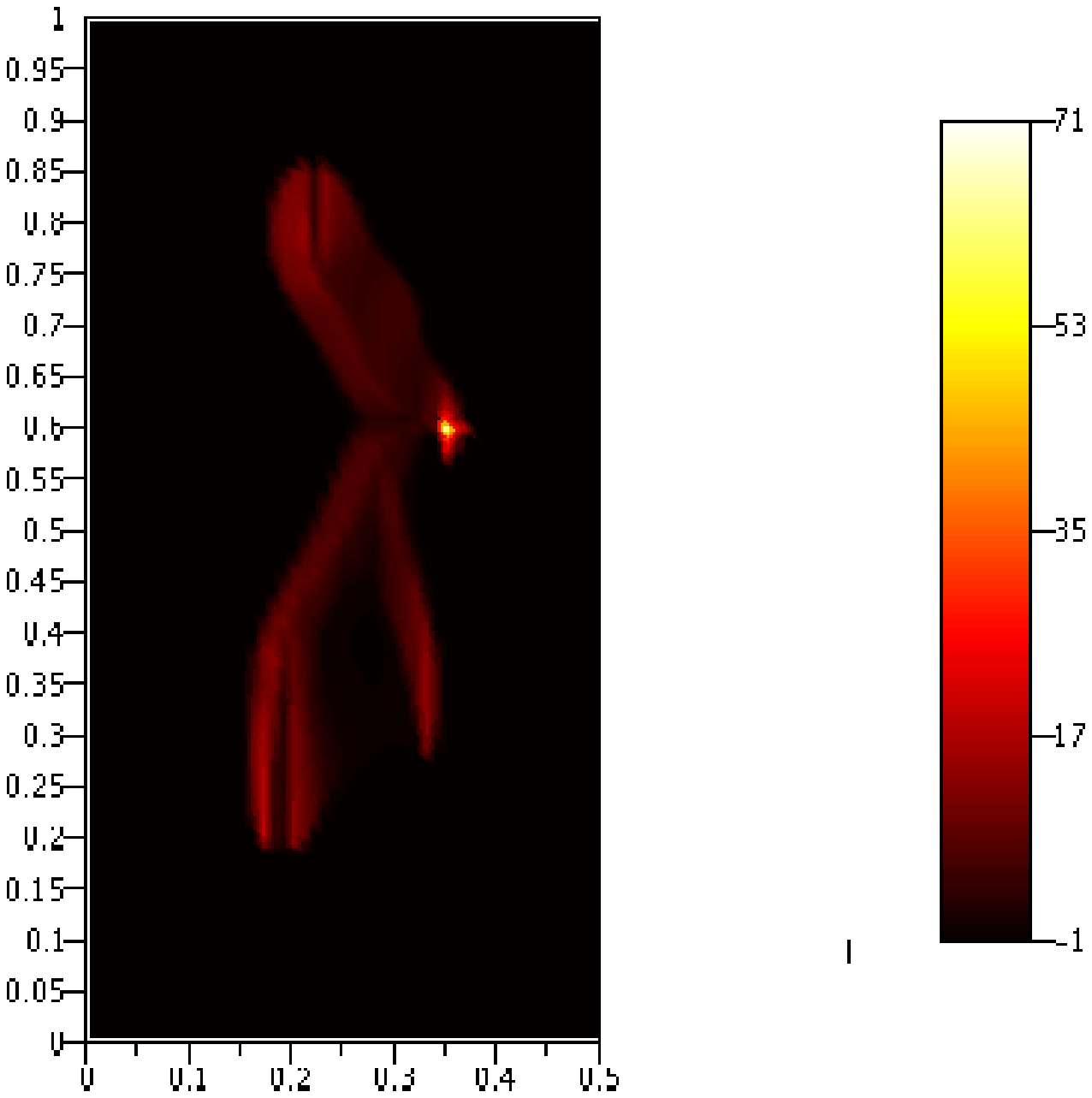}
	\end{minipage}
\end{center}
\caption{Optimal profiles for $\phi$ (left) and $\dv V$ (right), with $\eps=0.05$, in the case of an off-center point $y_0$, with coordinates  $y_0=(0.351485, 0.59901)$. This point coincides with the most singular point of the divergence of $V$. }\label{Fig:decentre}
\end{figure}
\end{center}

\newpage
\bibliographystyle{plain}


 \end{document}